\newcommand{\R}{\hbox{\rm I \kern -5pt R}}     % simbolo de Reales
\newcommand{\p} {\hbox{\rm I \kern -5pt P}}
\def\x  {\boldsymbol x} %{\textbf x}
\def\n  {\boldsymbol n} %{\textbf n}
\def\H   {\boldsymbol H} %{\textbf H}%    {\textbf H}% {\textbf{H}}
\def\L    {\boldsymbol L} %{\textbf L}%  {\textbf L}%  {\textbf{L}}
\def\Om       {\Omega}
\newtheorem{prop}{Proposition}[section]
\newtheorem{defi}[prop]{Definition}%[chapter]
\newtheorem{tma}[prop]{Theorem}%[chapter]
\newtheorem{cor}[prop]{Corollary}%[chapter]
\newtheorem{obs}[prop]{Remark}%[chapter]
\newtheorem{lem}[prop]{Lemma}%[chapter]
\begin{document}

\title{Study of a chemo-repulsion model with quadratic production. Part II: Analysis of an unconditionally energy-stable fully discrete scheme}
\author{F.~Guill\'en-Gonz\'alez\thanks{Dpto. Ecuaciones Diferenciales y An\'alisis Num\'erico and IMUS, 
Universidad de Sevilla, Facultad de Matem\'aticas, C/ Tarfia, S/N, 41012 Sevilla (SPAIN). Email: guillen@us.es, angeles@us.es},
M.~A.~Rodr\'{\i}guez-Bellido$^*$~and
 D.~A.~Rueda-G\'omez$^*$\thanks{Escuela de Matem\'aticas, Universidad Industrial de Santander, A.A. 678, Bucaramanga (COLOMBIA). Email:  diaruego@uis.edu.co}}

\date{}
\maketitle

\begin{abstract}

This work is devoted to the study  of a fully discrete scheme for a repulsive chemotaxis  with quadratic production model. By following the ideas presented in \cite{FMD}, we introduce an auxiliary variable (the gradient of the chemical concentration), and prove that the corresponding Finite Element (FE) backward Euler scheme is conservative and unconditionally energy-stable. Additionally, we also study some properties like solvability, a priori estimates, convergence towards weak solutions and error estimates. On the other hand, we propose two  linear iterative methods to approach the nonlinear scheme: an energy-stable Picard method and Newton's method. We prove solvability and  convergence of both methods towards the nonlinear scheme. Finally, we provide some numerical results in agreement with our theoretical analysis with respect to the error estimates.
\end{abstract}

\noindent{\bf 2010 Mathematics Subject Classification.} 35K51, 35Q92, 65M12, 65M15, 65M60, 92C17.

\noindent{\bf Keywords: } Chemorepulsion-production model, fully discrete scheme, finite element method, energy-stability, convergence, error estimates.

\section{Introduction}
The aim of this paper is to study an  unconditionally energy-stable fully discrete scheme for the following parabolic-parabolic repulsive-productive chemotaxis model (with quadratic production term):
\begin{equation}  \label{modelf00}
\left\{
\begin{array}
[c]{lll}%
\partial_t u - \Delta u = \nabla\cdot (u\nabla v)\ \ \mbox{in}\ \Omega,\ t>0,\\
\partial_t v - \Delta v + v =  u^{2} \ \mbox{in}\ \ \Omega,\ t>0,\\
\frac{\partial u}{\partial \n}=\frac{\partial v}{\partial \n}=0\ \ \mbox{on}\ \partial\Omega,\ t>0,\\
u(\x,0)=u_0(\x)\ge 0,\ v(\x,0)=v_0(\x)\ge 0\ \ \mbox{in}\ \Omega,
\end{array}
\right. \end{equation}
where $\Omega$ is a $n-$dimensional open bounded domain, $n=1,2,3$, with boundary $\partial \Omega$. The unknowns for this model are $u(\x, t) \ge 0$, the cell density, and $v(\x, t) \ge 0$, the chemical concentration. Problem (\ref{modelf00}) is conservative in $u$, because the total mass $\int_\Omega u(t)$ remains constant in time, as we can check  integrating equation (\ref{modelf00})$_1$ in $\Omega$, 
\begin{equation*}
\frac{d}{dt}\left(\int_\Omega u\right)=0, \ \ \mbox{ i.e. } \ 
\int_\Omega u(t)=\int_\Omega u_0 := m_0\vert \Omega\vert, \ \ \forall t>0.
\end{equation*}
In \cite{FMD} it was proved that  there exist  global in time ``weak-strong'' solutions of problem (\ref{modelf00}) in the following sense: $u\geq 0$ and $v\geq 0$ a.e.~$(t,\x)\in (0,+\infty)\times \Omega$,
\begin{equation}
\begin{array}{ccc}
(u-m_0,v-m_0^2) \in L^{\infty}(0,+\infty;L^2(\Omega)\times H^1(\Omega)) 
 \cap L^{2}(0,+\infty;H^1(\Omega) \times H^2(\Omega)), \\
( \partial_t u, \partial_t v) \in L^{q'}(0,T;H^1(\Omega)' \times L^2(\Omega)), \ \ \forall T>0,
\end{array} \label{wsa}
\end{equation}
where $q'=2$ in the 2-dimensional case (2D) and $q'=4/3$ in the 3-dimensional case (3D) ($q'$ is the conjugate exponent of $q=2$ in 2D and $q=4$ in 3D), satisfying the $u$-equation (\ref{modelf00})$_1$ in a variational sense, the $v$-equation (\ref{modelf00})$_2$ pointwisely a.e.~$(t,\x)\in (0,+\infty)\times \Omega$, and the following energy inequality a.e. $t_0,t_1:t_1\geq t_0\geq 0$:
\begin{equation*}
\mathcal{E}(u(t_1),v(t_1)) - \mathcal{E}(u(t_0),v(t_0))
 + \int_{t_0}^{t_1} (\Vert \nabla u(s)  \Vert_{L^2}^2 + \frac{1}{2} \Vert \Delta v(s) \Vert_{L^2}^2 +\frac{1}{2} \Vert \nabla v(s) \Vert_{L^2}^2)\ ds \leq0,
\end{equation*}
where $\mathcal{E}(u,v)=\frac{1}{2} \Vert u \Vert_{L^2}^2 
+ \frac{1}{4} \Vert \nabla v \Vert_{L^2}^2$. 
Moreover, assuming  the following regularity criterion:
\begin{equation*}
(u,\nabla v) \in  L^{\infty}(0,+\infty;H^1(\Omega)\times \H^1(\Omega)),
\end{equation*}
(which, at least is true in $1D$ and $2D$ domains), it was proved in \cite{FMD} that there exists a unique global in time strong solution of (\ref{modelf00}) satisfying  
\begin{equation}\label{kr3}
\left\{\begin{array}{rcl}
(u-m_0,v-m_0^2) &\in & L^{\infty}(0,+\infty; H^2(\Omega)^2) \cap L^2(0,+\infty;H^3(\Omega)^2),\\
(\partial_t u,\partial_t v) &\in & L^{\infty}(0,+\infty;L^2(\Omega)\times H^1(\Omega)) \cap L^2(0,+\infty;H^1(\Omega)\times H^2(\Omega)), \\
(\partial_{tt} u, \partial_{tt} v) &\in & L^2(0,+\infty;H^1(\Omega)' \times L^2(\Omega)).\end{array}\right.
\end{equation}
In particular, (\ref{kr3})$_1$ implies that $ (u,v)\in L^\infty(0,+\infty;L^\infty(\Om)^2)$. It should be desirable to design numerical methods for the model (\ref{modelf00}) conserving at the discrete level the main properties of the continuous model, such as mass-conservation, energy-stability, positivity and regularity. 

\

In relation to the study of chemo-repulsion models, there are some results about existence, uniqueness, regularity and qualitative properties of the solutions (\cite{C5:Cristian,Marcel,FMD,Yulin,T1,Tel}). In \cite{C5:Cristian}, the well-posedness of a chemo-repulsion model with linear production was studied, proving existence of global in time weak solutions and, for $2D$ domains, existence and  uniqueness of global in time strong solution. In the case of superlinear diffusion, global existence and uniqueness of solution in $nD$ domains (for $n\geq 3$) have been proved in \cite{Marcel}. Tao, in \cite{T1}, analyzed a chemo-repulsion model with nonlinear chemotactic sensitivity and linear production in $nD$ domains (with $n\geq 3$). Under some constraints  on the  chemotactic sensitivity function,  the existence of bounded classical solutions and the asymptotic convergence to the constant steady  state were proved. In \cite{Tel},  an extension of the Lotka-Volterra competition model was studied, in which a chemo-repulsive signal allows to one of the species to avoid encounters with rivals. The existence of global classical solution for the parabolic-parabolic and parabolic-elliptic cases in $nD$ domains (for $n\geq 1$) were proved there. In \cite{Yulin}, the existence,
uniform boundedness and long time behaviour of classical global solution were proved for a parabolic-elliptic chemo-repulsion system with nonlinear chemotactic sensitivity and nonlinear production. In  \cite{Win1}, radially symmetric solutions of a parabolic-elliptic chemoattraction system with nonlinear signal production ($u^p$) were studied, giving sufficient conditions (on the power $p$) under which  global bounded classical solution can be found.

\

On the other hand, some previous  works about numerical analysis for  chemotaxis models are the following. For the Keller-Segel system (i.e.~with chemo-attraction and linear production), in \cite{Filbet} Filbet studied  the existence of discrete solutions and the  convergence of a finite volume scheme. Saito, in \cite{Saito2}, proved error estimates for a conservative Finite Element (FE) approximation.   A mixed FE approximation  was studied in \cite{Marrocco}. In \cite{Eps}, some error estimates were proved for a fully discrete discontinuous FE method.  An energy-stable finite volume scheme for the Keller-Segel model with an additional cross-diffusion term has been studied in \cite{C5:BJ}. In \cite{Saito1}, a finite volume approximation for the parabolic-elliptic Keller-Segel system was studied, obtaining some error estimates and analyzing the blow-up phenomenon for the numerical
solution. The convergence of a characteristic splitting mixed finite element scheme for the Keller-Segel system was studied in \cite{Zhan-Zhu} and the corresponding error estimates were derived. In \cite{C5:FMD4}, unconditionally energy stable FE schemes for a chemo-repulsion model with linear production were studied. The convergence of a combined finite volume-nonconforming FE scheme was studied in \cite{CST}, in the case where the chemotaxis occurs in heterogeneous medium. In \cite{Fou}, the convergence of a positive nonlinear control volume finite element scheme for solving an anisotropic degenerate breast cancer development model (in which, chemotaxis phenomenon is included) was analyzed.
%However, as far as we know, there are not works studying FE schemes satisfying the property of energy-stability related to the energy inequality (\ref{wsd}). 

\

In this paper, we propose an unconditionally energy-stable fully discrete FE scheme, which inherit  some other  properties from the continuous model,  such as mass-conservation, and weak and strong estimates analogous to (\ref{wsa}) and (\ref{kr3}). Moreover, with respect to the positivity of the discrete variables $u^n_h$ and $v^n_h$, we can deduce that $v^n_h \geq 0$ (see Remark \ref{PoV}), but the positivity of discrete cell density $u^n_h$  can not be assured.

\

In order to design the scheme, we follow  the ideas presented in \cite{FMD}, where  (\ref{modelf00}) is reformulated by introducing the auxiliary  variable ${\boldsymbol\sigma}=\nabla v$ instead of $v$. Then,  model (\ref{modelf00}) is rewritten as:
\begin{equation}.  \label{modelf01}
 \left\{
\begin{array}
[c]{lll}%
\partial_t u - \nabla\cdot (\nabla u) = \nabla\cdot (u{\boldsymbol \sigma})\ \ \mbox{in}\ \Omega,\ t>0,\\
\partial_t {\boldsymbol \sigma} - \nabla(\nabla\cdot {\boldsymbol \sigma}) + {\boldsymbol \sigma} +\mbox{rot}(\mbox{rot }{\boldsymbol \sigma}) = \nabla(u^2) \ \mbox{in}\ \ \Omega,\ t>0,\\
\frac{\partial u}{\partial \n}=0\ \ \mbox{on}\ \partial\Omega,\ t>0,\\
{\boldsymbol \sigma}\cdot \n=0, \ \ \left[\mbox{rot }{\boldsymbol \sigma} \times \n\right]_{tang}=0 \ \ \mbox{on}\ \partial\Omega,\ t>0,\\
u(\x,0)=u_0(\x)\ge 0,\ {\boldsymbol \sigma}(\x,0)=\nabla v_0(
\x)\ \ \mbox{in}\ \Omega,
\end{array}
\right. 
\end{equation}
where (\ref{modelf01})$_2$ has been obtained by applying the gradient operator to equation (\ref{modelf00})$_2$  and adding the term $\mbox{rot}(\mbox{rot }{\boldsymbol \sigma})$ using that $\mbox{rot }{\boldsymbol \sigma}=\mbox{rot}(\nabla v)=0$. Once system (\ref{modelf01}) is solved, $v$ can be recovered from $u^2$ by solving
\begin{equation} \left\{
\begin{array}
[c]{lll}%
\partial_t v -\Delta v + v = u^2  \ \mbox{in}\ \ \Omega,\ t>0,\\
\frac{\partial v}{\partial \n}=0\ \ \mbox{on}\ \partial\Omega,\ t>0,\\
 v(\x,0)=v_0(\x)>0\ \ \mbox{in}\ \Omega.
\end{array}
\right.  \label{modelf01eqv}
\end{equation}
The outline of this paper is as follows: 
In Section 2,  the notation and some preliminary results are given. 
In Section 3,  the properties of the FE backward Euler scheme corresponding to formulation (\ref{modelf01})-(\ref{modelf01eqv}) are studied, 
including the  mass conservation, unconditional energy-stability, sol\-va\-bility, weak and strong estimates, convergence towards weak solutions, and optimal error estimates. 
In Section 4, two different linear iterative methods are proposed 
in order to approach the nonlinear scheme described in Section 3, which are an energy-stable Picard method and Newton's method. 
Solvability of these  methods and  convergence  towards the nonlinear scheme are also proved.
 Finally, in Section 5, some numerical results, in agreement with the theoretical analysis about the error estimates, are presented.

\section{Notations and preliminary results}
%Recall some functional spaces which will be used throughout this paper. 
 The classical Sobolev spaces $H^m(\Omega)$ and Lebesgue spaces $L^p(\Omega)$, 
$1\leq p\leq \infty,$ with norms $\Vert\cdot\Vert_{m}$ and $\Vert\cdot \Vert_{L^p}$, respectively, will be considered.  
In particular,  the $L^2(\Omega)$-norm will be denoted by $\Vert
\cdot\Vert_0$.  The space $\H^{1}_{\sigma}(\Omega)$ is defined as $\H^{1}_{\sigma}(\Omega):=\{\mathbf{u}\in \H^{1}(\Omega): \mathbf{u}\cdot \n=0 \mbox{ on } \partial\Omega\}$ and  the following equivalent norms in $H^1(\Omega)$  and ${\bf H}_{\sigma}^1(\Omega)$, respectively (see \cite{necas} and \cite[Corollary 3.5]{Nour}, respectively) 
will be used:
\begin{equation*}
\Vert u \Vert_{1}^2=\Vert \nabla u\Vert_{0}^2 + \left( \int_\Omega u\right)^2, \ \ \forall u\in H^1(\Omega),
\end{equation*}
\begin{equation*}
\Vert {\boldsymbol\sigma} \Vert_{1}^2=\Vert {\boldsymbol\sigma}\Vert_{0}^2 + \Vert \mbox{rot }{\boldsymbol\sigma}\Vert_0^2 + \Vert \nabla \cdot {\boldsymbol\sigma}\Vert_0^2, \ \ \forall {\boldsymbol\sigma}\in \H^{1}_{\sigma}(\Omega).
\end{equation*}
If $Z$ is a
general Banach space, its topological dual will be denoted by $Z'$.
Moreover, the letters $C,C_i,K_i$ will denote different positive
constants independent of discrete parameters.
%which may change from line to line (or even within the same line).
 
 \
 
The following linear elliptic operators are introduced, namely
	\begin{equation}\label{B001} 
	\widehat A u =g \quad \Longleftrightarrow \quad 
	\left\{\begin{array}{l}
	-\Delta u+ \int_\Omega u = g \ \mbox{ in } \Omega,\\
	\displaystyle\frac{\partial u}{\partial\n}=0 \ \mbox{ on }  \partial\Omega, \vspace{0,2 cm}\\
	\end{array}\right.
	\end{equation} 
\begin{equation}\label{B101} 
A v =g \quad \Longleftrightarrow \quad 
\left\{\begin{array}{l}
-\Delta v + v = g \ \mbox{ in } \Omega,\\
\displaystyle\frac{\partial v}{\partial\n}=0 \ \mbox{ on }  \partial\Omega, \vspace{0,2 cm}\\
\end{array}\right.
\end{equation} 
and 
\begin{equation} \label{B201}
B{\boldsymbol \sigma} = h \quad \Longleftrightarrow \quad 
\left\{\begin{array}{l}
-\nabla(\nabla\cdot {\boldsymbol \sigma}) + \mbox{rot(rot }{\boldsymbol \sigma}\mbox{)} + {\boldsymbol \sigma}  = h \ \mbox{ in }  \Omega,\\
{\boldsymbol \sigma}\cdot \n=0, \ \ \left[\mbox{rot }{\boldsymbol \sigma} \times \n\right]_{tang}=0 \ \mbox{ on }  \partial\Omega,
\end{array}\right. 
\end{equation}
which, in variational form, are given by 
$\widehat{A},A:H^1(\Omega)\rightarrow H^1(\Omega)'$ and $B:\H^{1}_{\sigma}(\Omega)\rightarrow \H^{1}_{\sigma}(\Omega)'$ such that 
	\begin{equation*}
	\langle \widehat A u,\bar{u}\rangle=(\nabla u, \nabla \bar{u})+\left(\int_\Omega u\right) \left(\int_\Omega\bar{u}\right), \ \ \forall u,\bar{u}\in {H}^{1}(\Omega),
	\end{equation*}
\begin{equation*}
\langle Av ,\bar{v}\rangle=(\nabla v, \nabla \bar{v}) + (v, \bar{v}), \ \ \forall v,\bar{v}\in {H}^{1}(\Omega),
\end{equation*}
\begin{equation*}
\langle B{\boldsymbol\sigma},\bar{{\boldsymbol\sigma}}\rangle=({\boldsymbol\sigma},\bar{{\boldsymbol\sigma}})+(\nabla \cdot {\boldsymbol\sigma}, \nabla\cdot \bar{\boldsymbol\sigma})+(\mbox{rot }{\boldsymbol \sigma},\mbox{rot }\bar{\boldsymbol \sigma}),\ \ \forall {\boldsymbol\sigma},\bar{{\boldsymbol\sigma}}\in
\H^{1}_{\sigma}(\Omega).
\end{equation*} 
The $H^2$-regularity of problems (\ref{B001})-(\ref{B201}) must be assumed. 
Consequently, there exist some constants $C>0$ such that
\begin{equation}\label{H2us1-a}
 \Vert u\Vert_{2}\leq C\Vert \widehat{A} u \Vert_{0} \ \ \ \forall u\in H^2(\Omega), \ \  \Vert v\Vert_{2}\leq C\Vert A v \Vert_{0} \ \ \ \forall v\in H^2(\Omega),
\end{equation}
\begin{equation} \label{H2us1-b}
\Vert {\boldsymbol\sigma}\Vert_{2}\leq C\Vert B {\boldsymbol\sigma} \Vert_{0} \ \ \ \forall {\boldsymbol\sigma}\in \H^2(\Omega). 
\end{equation}
The classical 3D interpolation inequality will be repeatedly used
\begin{equation}\label{in3D}
\Vert u\Vert_{L^3}\leq C\Vert u\Vert_0^{1/2}\Vert u\Vert_{1}^{1/2} \  \ \forall u\in H^1(\Omega).
\end{equation}
Finally,  the following result will also be used (see
% \cite{He} and 
\cite{Shen}):
%\begin{lem} \label{tmaD}
%Assume that $\delta,\beta,k>0$ and $d^n\geq 0$ satisfy
%\begin{equation*}
% \frac{d^{n+1} - d^n}{k} + \delta d^{n+1} \leq \beta , \ \ \forall n\geq 0.
%\end{equation*}
%Then, for any $n_0\geq 0$,
%\begin{equation*}
%d^n \leq (1+\delta k)^{-(n-n_0)} d^{n_0} + \delta^{-1} \beta, \ \ \forall n\geq n_0.
%\end{equation*}
%\end{lem}
\begin{lem}{\bf (Uniform discrete Gronwall lemma)}\label{LGD001}
 Let $k>0$ and $d^n,g^n,h^n\geq 0$ such that 
\begin{equation*}
\frac{d^{n+1}-d^n}{k} \leq   g^n d^n +  h^n, \ \ \forall n\geq 0.
\end{equation*}
If for any $r\in \mathbb{N}$, there exist $a_1(t_r)$, $a_2(t_r)$ and $a_3(t_r)$ depending on $t_r =kr$, such that
\begin{equation*}\label{Gb03}
k\underset{n=n_0}{\overset{n_0+r-1}{\sum}} g^n\leq a_1(t_r), \ \ k\underset{n=n_0}{\overset{n_0+r-1}{\sum}} h^n\leq a_2(t_r), \ \ k\underset{n=n_0}{\overset{n_0+r-1}{\sum}} d^n\leq a_3(t_r),
\end{equation*}
for any integer $n_0\geq 0$, 
then
\begin{equation*}\label{Gb04}
d^n\leq \left(a_2(t_r)+ \frac{a_3(t_r)}{t_r} \right) \mbox{exp}\left\{ a_1(t_r)\right\}, \ \ \forall n\geq r.
\end{equation*}
\end{lem}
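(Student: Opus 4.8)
The plan is to adapt the classical uniform Gronwall argument (in the spirit of Temam, discretized as in \cite{Shen}) to the present setting. First I would put the hypothesis in multiplicative form: since $k>0$ and $g^n,h^n\geq 0$, the inequality $\frac{d^{n+1}-d^n}{k}\leq g^n d^n+h^n$ is equivalent to $d^{n+1}\leq (1+kg^n)d^n+kh^n$. Iterating this downward from a fixed index $n$ to an arbitrary index $m$ with $0\leq m\leq n-1$ gives, by a routine induction on $n-m$,
\[
d^n \leq \Big(\prod_{j=m}^{n-1}(1+kg^j)\Big)\, d^m + \sum_{j=m}^{n-1} k h^j \prod_{i=j+1}^{n-1}(1+kg^i).
\]

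Next I would control the products. Using $1+x\leq e^x$ for $x\geq 0$ and $g^j\geq 0$, every partial product satisfies $\prod_{i=j+1}^{n-1}(1+kg^i)\leq \prod_{i=m}^{n-1}(1+kg^i)\leq \exp\big(k\sum_{i=m}^{n-1}g^i\big)$. Now fix $r\in\mathbb{N}$ and take $n\geq r$, so that $n-r\geq 0$. For every index $m$ in the window $\{n-r,\dots,n-1\}$ we have $\{m,\dots,n-1\}\subseteq\{n-r,\dots,n-1\}$, hence $k\sum_{i=m}^{n-1}g^i\leq k\sum_{i=n-r}^{n-1}g^i$ and $\sum_{j=m}^{n-1}kh^j\leq k\sum_{j=n-r}^{n-1}h^j$; since each of these windows has exactly $r$ terms and starts at the admissible index $n_0=n-r$, the hypotheses yield $k\sum_{i=n-r}^{n-1}g^i\leq a_1(t_r)$ and $k\sum_{j=n-r}^{n-1}h^j\leq a_2(t_r)$. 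Substituting into the iterated estimate, we obtain for every $m\in\{n-r,\dots,n-1\}$
\[
d^n \leq e^{a_1(t_r)}\big(d^m + a_2(t_r)\big).
\]

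The last step is the averaging device that produces uniformity in $n$. I would sum the previous inequality over the $r$ values $m=n-r,\dots,n-1$, which gives $r\,d^n\leq e^{a_1(t_r)}\big(\sum_{m=n-r}^{n-1}d^m + r\,a_2(t_r)\big)$, then invoke the third hypothesis with $n_0=n-r$, namely $k\sum_{m=n-r}^{n-1}d^m\leq a_3(t_r)$, i.e.\ $\sum_{m=n-r}^{n-1}d^m\leq a_3(t_r)/k$. Dividing by $r$ and using $t_r=rk$ yields exactly
\[
d^n \leq e^{a_1(t_r)}\Big(\frac{a_3(t_r)}{rk} + a_2(t_r)\Big) = \Big(a_2(t_r)+\frac{a_3(t_r)}{t_r}\Big)\exp\{a_1(t_r)\}, \quad \forall n\geq r.
\]

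I expect the only delicate point to be the bookkeeping of the index ranges: one must ensure that in each use of the hypotheses the summation window is of length exactly $r$ and starts from the admissible index $n_0=n-r\geq 0$, and that the telescoped products from the iteration are estimated over this same window. The structural feature being exploited — and the reason the bound holds for all $n\geq r$ rather than merely asymptotically — is precisely that the hypotheses bound the partial sums of $g^n$, $h^n$ and $d^n$ over every window of length $r$ \emph{uniformly} in the starting index; combined with averaging the $r$ shifted copies of the iterated inequality, this eliminates any dependence on $d^0$.
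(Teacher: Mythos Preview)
Your proof is correct. Note that the paper does not actually supply a proof of this lemma; it merely states it with a citation to \cite{Shen} (see the sentence ``Finally, we will use the following results (see \cite{He} and \cite{Shen})'' preceding Lemmas~\ref{tmaD} and~\ref{LGD001}). Your argument is the standard one---iterate the recursion to get a telescoped product, bound all products over the window $\{n-r,\dots,n-1\}$ via $1+x\leq e^x$ and the uniform bound on $k\sum g^i$, then average over the $r$ starting indices $m$ to trade $d^m$ for the uniform bound on $k\sum d^m$---and this is precisely the approach in Shen's original proof. The index bookkeeping you flag as the delicate point is handled correctly.
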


As  a consequence of Lemma \ref{LGD001} and the classical discrete Gronwall Lemma, 
 the following result holds (see \cite[Corollary 2.4.]{FMD}):
\begin{cor}\label{CorUnif}
 Assume conditions of Lemma \ref{LGD001}. Let $k_0\in \mathbb{N}$ be fixed, then the following estimate holds for all $k\leq k_0$
\begin{equation*}
d^n \leq C(d^0,k_0) \ \ \ \forall n\geq 0.
\end{equation*}
\end{cor}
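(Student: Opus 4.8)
The plan is to obtain the uniform-in-$n$, uniform-in-$k$ bound by splitting the time axis into an initial segment, handled by the classical discrete Gronwall lemma, and the remaining times, handled by the uniform discrete Gronwall lemma (Lemma~\ref{LGD001}); the key trick is to let the window length $r$ in Lemma~\ref{LGD001} depend on $k$, chosen so that $t_r=kr$ stays in a fixed bounded interval that is also bounded away from $0$. Before starting I would note that, since $g^n,h^n,d^n\ge 0$, the partial sums in the hypotheses of Lemma~\ref{LGD001} are non-decreasing in the length of the summation window, so we may assume without loss of generality that $a_1,a_2,a_3$ are non-decreasing functions of $t_r$.

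For the initial segment, I would rewrite the recurrence as $d^{n+1}\le(1+kg^n)d^n+kh^n$ and iterate to get the classical estimate
\[
d^n\le\exp\Big(k\sum_{m=0}^{n-1}g^m\Big)\Big(d^0+k\sum_{m=0}^{n-1}h^m\Big),\qquad n\ge1 .
\]
Applying the hypotheses of Lemma~\ref{LGD001} with $n_0=0$ and window length $n$ (so that the relevant $t_r$ equals $nk$), together with the monotonicity of $a_1,a_2$, this yields $d^n\le e^{a_1(1)}\big(d^0+a_2(1)\big)=:C_1(d^0)$ for every $n$ with $nk\le1$; and $d^0\le C_1(d^0)$ trivially.

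For the remaining times, I would fix $r=r(k):=\lceil 1/k\rceil$, so that $1\le t_r=kr\le 1+k\le 1+k_0$. Lemma~\ref{LGD001} then gives, for all $n\ge r$,
\[
d^n\le\Big(a_2(t_r)+\frac{a_3(t_r)}{t_r}\Big)\exp\big(a_1(t_r)\big)\le\big(a_2(1+k_0)+a_3(1+k_0)\big)\exp\big(a_1(1+k_0)\big)=:C_2(k_0),
\]
where I have used $t_r\ge1$ to bound $a_3(t_r)/t_r\le a_3(t_r)$, and monotonicity to replace $t_r$ by $1+k_0$. Since every integer $n\ge0$ is either $\le r-1$ (hence $nk<1$, covered by the first step) or $\ge r$ (covered here), I conclude $d^n\le\max\{C_1(d^0),C_2(k_0)\}=:C(d^0,k_0)$ for all $n\ge0$ and all $0<k\le k_0$.

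The only delicate point is precisely this coupling of the two parameters: the bound of Lemma~\ref{LGD001} is valid only for $n\ge r$ and involves $a_i(t_r)$, so a fixed choice of $r$ would make the constant blow up as $k\to0$; taking $r\sim1/k$ keeps $t_r$ inside the compact interval $[1,1+k_0]$ and bounded away from $0$ (which is what controls the $a_3(t_r)/t_r$ term), while simultaneously confining the leftover initial segment $\{\,n<r\,\}$ to the time range $nk<1$, where the plain Gronwall inequality applies. Everything else is routine bookkeeping.
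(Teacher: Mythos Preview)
Your proof is correct and follows precisely the approach the paper indicates: the statement is presented there as a consequence of Lemma~\ref{LGD001} together with the classical discrete Gronwall lemma (with the detailed argument deferred to \cite[Corollary~2.4]{FMD}), and your splitting into an initial segment handled by ordinary Gronwall and a tail handled by Lemma~\ref{LGD001} with $r=\lceil 1/k\rceil$ is exactly how one makes this combination work uniformly in $k\le k_0$.
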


\section{Fully discrete backward Euler scheme in the variables $(u,\boldsymbol\sigma)$}
This section is devoted to design an unconditionally energy-stable scheme for model (\ref{modelf00}) (with respect to a modified energy in the variables $(u,{\boldsymbol\sigma})$), using a FE discretization in space and the backward
Euler discretization in time (considered for simplicity on a uniform partition of $[0,+\infty)$ given by $t_n=nk$, where $k>0$ denotes the time step). 
Concerning the space discretization, let $\{\mathcal{T}_h\}_{h>0}$ be a family of shape-regular and quasi-uniform  triangulations of $\overline{\Omega}$ made up of simplexes $K$ (triangles in two dimensions and tetrahedra in three dimensions), such that $\overline{\Omega}= \cup_{K\in \mathcal{T}_h} K$, where $h = \max_{K\in \mathcal{T}_h} h_K$, with $h_K$ being the diameter of $K$. Furthermore, $\mathcal{N}_h = \{ \mathbf{a}_{i}\}_{i\in \mathcal{I}}$ denotes the set of all nodes of $\mathcal{T}_h$. 
 The following continuous FE spaces for $u$, ${\boldsymbol\sigma}$ and $v$, are chosen:
$$(U_h, {\boldsymbol\Sigma}_h, V_h) \subset H^1 \times \H^1_{\sigma}\times W^{1,6}\quad \hbox{generated by $\mathbb{P}_k,\mathbb{P}_m,\mathbb{P}_{r}$ with $k,m,r\geq 1$.}
$$

Now, the linear operators  $\widehat{A}_h: H^1(\Omega)\rightarrow U_h$, $B_h: \H_{\sigma}^1(\Omega) \rightarrow {\boldsymbol\Sigma}_h$ and ${A}_h: H^1(\Omega)\rightarrow V_h$ are considered, defined by:
\begin{equation}\label{opnew1}
\begin{array}{lll}

\displaystyle(\widehat{A}_h u_h,\bar{u}_h)=(\nabla u_h,\nabla \bar{u}_h) + \left(\int_{\Omega} u_h\right) \left( \int_{\Omega}\bar{u}_h\right), \ \ \forall \bar{u}_h\in U_h,  \\
(B_h {\boldsymbol\sigma}_h, \bar{\boldsymbol\sigma}_h)=(\nabla \cdot {\boldsymbol\sigma}_h,\nabla \cdot \bar{\boldsymbol\sigma}_h) + (\mbox{rot }{\boldsymbol\sigma}_h,\mbox{rot }\bar{\boldsymbol\sigma}_h) + ({\boldsymbol\sigma}_h,\bar{\boldsymbol\sigma}_h), \ \ \forall \bar{\boldsymbol\sigma}_h \in {\boldsymbol\Sigma}_h,\\
({A}_h v_h, \bar{v}_h)=(\nabla v_h,\nabla \bar{v}_h)  + ({v}_h,\bar{v}_h), \ \ \forall \bar{v}_h \in {V}_h.
\end{array}
\end{equation}
Moreover, we choose the following interpolation operators:  
$$\mathcal{R}_h^u:H^1(\Omega)\rightarrow U_h,\quad \mathcal{R}_h^{\boldsymbol \sigma}:\H^1_{\sigma}(\Omega)\rightarrow {\boldsymbol \Sigma}_h,\quad \mathcal{R}_h^v:H^1(\Omega)\rightarrow V_h,
$$
  such that, for all $u\in H^1(\Omega)$, ${\boldsymbol\sigma}\in \H^1_\sigma(\Omega)$ and $v\in H^1(\Omega)$, the operators $\mathcal{R}_h^u u\in U_h$, $\mathcal{R}_h^{\boldsymbol\sigma} {\boldsymbol\sigma}\in {\boldsymbol\Sigma}_h$ and $\mathcal{R}_h^v v\in V_h$ satisfy respectively
\begin{equation}\label{interp2}
(\nabla
(\mathcal{R}_h^u u - u), \nabla\bar{u}_h)   + \left(\int_{\Omega} (\mathcal{R}_h^u u - u) \right) \left( \int_{\Omega}\bar{u}_h\right)=0,\ \ \forall \bar{u}_h\in U_h,
\end{equation}
\begin{equation}\label{interp1}
(\nabla \cdot (\mathcal{R}_h^{\boldsymbol\sigma} {\boldsymbol\sigma} - {\boldsymbol\sigma}),\nabla \cdot \bar{\boldsymbol\sigma}_h) + (\mbox{rot}(\mathcal{R}_h^{\boldsymbol\sigma} {\boldsymbol\sigma}-{\boldsymbol\sigma}),\mbox{rot }\bar{\boldsymbol\sigma}_h) + (\mathcal{R}_h^{\boldsymbol\sigma} {\boldsymbol\sigma} - {\boldsymbol\sigma},\bar{\boldsymbol\sigma}_h)=0,\ \ \forall \bar{\boldsymbol
\sigma}_h\in {\boldsymbol\Sigma}_h,
\end{equation}
\begin{equation}\label{interp2a}
(\nabla (\mathcal{R}_h^v v - v),\nabla \bar{v}_h) + (\mathcal{R}_h^v v - v, \bar{v}_h)=0,\ \ \forall \bar{v}_h\in V_h .
\end{equation}
 Observe that, from Lax-Milgram Theorem, the interpolation operators $\mathcal{R}_h^u$, $\mathcal{R}_h^{\boldsymbol \sigma}$ and $\mathcal{R}_h^v$ are well defined. Moreover, the following interpolation errors hold
\begin{equation}\label{EI01}
\frac{1}{h} \Vert \mathcal{R}_h^u u - u\Vert_0 + \Vert \mathcal{R}_h^u u - u\Vert_{1} \leq Ch^{k'} \Vert u\Vert_{k'+1} \ \ \forall u\in  H^{k'+1}(\Omega), \ \ (1\le k'\le k)
\end{equation}
\begin{equation}\label{EI02}
\frac{1}{h} \Vert \mathcal{R}_h^{\boldsymbol\sigma} {\boldsymbol\sigma} - {\boldsymbol\sigma}\Vert_0 + \Vert \mathcal{R}_h^{\boldsymbol\sigma} {\boldsymbol\sigma} - {\boldsymbol\sigma}\Vert_{1} \leq Ch^{m'} \Vert  {\boldsymbol\sigma}\Vert_{m'+1} \ \ \forall  {\boldsymbol\sigma}\in  \H^{m'+1}(\Omega),\ \ (1\le m'\le m)
\end{equation}
\begin{equation}\label{EI03}
\frac{1}{h} \Vert \mathcal{R}_h^v v - v\Vert_0 + \Vert \mathcal{R}_h^v v - v\Vert_{1} \leq Ch^{r'} \Vert v\Vert_{r'+1} \ \ \forall v\in  H^{r'+1}(\Omega), \ \ (1\le r'\le r).
\end{equation}
Also, the following stability property will be used 
\begin{equation}\label{SP01}
\Vert (\mathcal{R}_h^u u,\mathcal{R}_h^{\boldsymbol\sigma} {\boldsymbol\sigma},\mathcal{R}_h^v v) \Vert_{W^{1,6}}\leq C\Vert (u,{\boldsymbol\sigma},v)\Vert_2,
\end{equation}
which can be obtained from (\ref{EI01})-(\ref{EI03}), using the inverse inequality 
\begin{equation}\label{16inv1} 
\Vert (u_h,{\boldsymbol\sigma}_h,v_h)\Vert_{W^{1,6}}\leq Ch^{-1}\Vert (u_h,{\boldsymbol\sigma}_h,v_h)\Vert_1 \ \  \ \forall(u_h,{\boldsymbol\sigma}_h,v_h)\in U_h \times {\boldsymbol\Sigma}_h\times V_h,
\end{equation}
 and comparing $\mathcal{R}_h^{u,{\boldsymbol\sigma},v}$ with an average interpolation of Clement or Scott-Zhang type (which are stable in the $W^{1,6}$-norm).

\begin{lem}
Assume the $H^2$-regularity for problems (\ref{B001})-(\ref{B201}) given in (\ref{H2us1-a})-(\ref{H2us1-b}). Then, 
%the following estimates hold
\begin{equation}\label{pd0a}
\Vert u_h\Vert_{W^{1,6}}\leq C\Vert \widehat{A}_h u_h\Vert_{0} \ \ \forall u_h\in U_h, \ \ \ \ 
\Vert {v}_h\Vert_{W^{1,6}}\leq C\Vert {A}_h {v}_h\Vert_{0} \ \ \forall v_h\in V_h,
\end{equation}
\begin{equation}\label{pd0b}
\Vert {\boldsymbol \sigma}_h\Vert_{W^{1,6}}\leq C\Vert B_h {\boldsymbol \sigma}_h\Vert_{0} \ \ \forall {\boldsymbol \sigma}_h\in {\boldsymbol \Sigma}_h.
\end{equation}
\end{lem}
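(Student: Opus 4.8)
The plan is to recognize each of $u_h$, $v_h$ and $\boldsymbol\sigma_h$ as a Ritz-type projection (in the sense of (\ref{interp2}), (\ref{interp2a}), (\ref{interp1})) of the solution of the associated continuous elliptic problem (\ref{B101}) or (\ref{B201}), and then to apply the $W^{1,6}$-stability of those projections recorded in (\ref{SP01}).

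Concretely, for the first estimate in (\ref{pd0a}): given $u_h\in U_h$, put $\mathbf g_h:=A_h u_h$, which belongs to $U_h\subset L^2(\Omega)$, and let $w\in H^1(\Omega)$ be the solution of $Aw=\mathbf g_h$ in the sense of (\ref{B101}). Since $\mathbf g_h\in L^2(\Omega)$, the assumed $H^2$-regularity yields $w\in H^2(\Omega)$ and, by (\ref{H2us1}), $\Vert w\Vert_2\le C\Vert Aw\Vert_0=C\Vert\mathbf g_h\Vert_0=C\Vert A_h u_h\Vert_0$. Next, testing the variational form of $Aw=\mathbf g_h$ against an arbitrary $\bar u_h\in U_h$ and using the definition (\ref{opnew1}) of $A_h$ gives $(\nabla w,\nabla\bar u_h)+(w,\bar u_h)=(\mathbf g_h,\bar u_h)=(A_h u_h,\bar u_h)=(\nabla u_h,\nabla\bar u_h)+(u_h,\bar u_h)$ for every $\bar u_h\in U_h$; this is exactly the relation (\ref{interp2}) characterizing $\mathcal{R}_h^u w$, so by uniqueness (Lax--Milgram) $u_h=\mathcal{R}_h^u w$. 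Finally, (\ref{SP01}) gives $\Vert u_h\Vert_{W^{1,6}}=\Vert\mathcal{R}_h^u w\Vert_{W^{1,6}}\le C\Vert w\Vert_2\le C\Vert A_h u_h\Vert_0$.

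The second estimate in (\ref{pd0a}) is proved word for word with $(U_h,A_h,\mathcal{R}_h^u,(\ref{interp2}))$ replaced by $(V_h,\widetilde A_h,\mathcal{R}_h^v,(\ref{interp2a}))$, noting that $A$ and $\widetilde A_h$ carry the same bilinear form; likewise (\ref{pd0b}) follows by taking $\mathbf g_h:=B_h\boldsymbol\sigma_h\in\boldsymbol\Sigma_h\subset\L^2(\Omega)$, solving $B\boldsymbol\sigma=\mathbf g_h$ as in (\ref{B201}), using the $H^2$-regularity of $B$ in (\ref{H2us1}) to get $\Vert\boldsymbol\sigma\Vert_2\le C\Vert B_h\boldsymbol\sigma_h\Vert_0$, identifying $\boldsymbol\sigma_h=\mathcal{R}_h^{\boldsymbol\sigma}\boldsymbol\sigma$ through (\ref{interp1}), and closing with (\ref{SP01}).

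There is no real difficulty in this argument; the only points requiring a little care are that the right-hand sides $A_h u_h$, $\widetilde A_h v_h$, $B_h\boldsymbol\sigma_h$ genuinely lie in $L^2$ (so that the $H^2$-regularity hypothesis (\ref{H2us1}) applies) and that the bilinear forms defining the discrete operators in (\ref{opnew1}) and the projections in (\ref{interp2})--(\ref{interp2a}) coincide, which is what makes the identification with the projection exact. It is also worth emphasizing that (\ref{SP01}) is precisely the $W^{1,6}$-stability of $\mathcal{R}_h^{u,\boldsymbol\sigma,v}$ measured in the $H^2$-norm, so that the regularity bound on the continuous auxiliary solution is exactly the quantity one needs to control.
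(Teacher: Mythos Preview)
Your proof is correct and rests on the same auxiliary construction as the paper's: given $u_h$, one solves the continuous problem $Aw=A_h u_h$ and uses the $H^2$-regularity (\ref{H2us1}) to bound $\Vert w\Vert_2$ by $\Vert A_h u_h\Vert_0$. Where the two arguments diverge is in the passage from $w$ back to $u_h$. The paper writes a triangle-inequality decomposition
\[
\Vert u_h\Vert_{W^{1,6}}\le \Vert u_h-\mathcal{R}_h^u w\Vert_{W^{1,6}}+\Vert \mathcal{R}_h^u w-w\Vert_{W^{1,6}}+\Vert w\Vert_{W^{1,6}}
\]
and bounds each piece separately via inverse inequalities and interpolation errors. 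You instead observe directly that $u_h=\mathcal{R}_h^u w$ (because the bilinear forms in (\ref{opnew1}) and (\ref{interp2}) coincide), so the first term is identically zero and the remaining two collapse into a single application of (\ref{SP01}). This is a genuine simplification: the paper's estimate (\ref{D0a}) on $\Vert \boldsymbol\sigma_h-\mathcal{R}_h^{\boldsymbol\sigma}\boldsymbol\sigma(h)\Vert_1$ is in fact bounding a quantity that vanishes exactly, and your route avoids that redundancy. The trade-off is that you lean on (\ref{SP01}), itself proved from the same inverse-inequality and interpolation-error ingredients the paper uses explicitly; so the underlying machinery is identical, but your packaging is cleaner.
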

\begin{proof}
First, we consider regular functions associated to the discrete functions $\widehat{A}_h u_h$, ${A}_h v_h$ and $B_h {\boldsymbol \sigma}_h$. We define $u(h), v(h)\in H^2(\Omega)$ and ${\boldsymbol \sigma}(h)\in \H^2(\Omega)$ as the solutions of elliptic problems 
$$
 A u(h)=\widehat{A}_h u_h,  \quad A v(h)=A_h v_h\quad \hbox{and}\quad B {\boldsymbol \sigma}(h)=B_h {\boldsymbol \sigma}_h. $$
%\begin{equation}\label{A01new}
%\left\{
%\begin{array}{lll}
%-\Delta u(h) + u(h)=A_h^u u_h \ \ \mbox{ in } \ \Omega,\\ \vspace{0.2 cm}
%\displaystyle\frac{\partial u(h)}{\partial \n}=0 \ \ \mbox{ on } \ \partial\Omega,
%\end{array}
%\right.
%\end{equation}
%\begin{equation}\label{B101new} 
%\left\{
%\begin{array}{lll}
%-\Delta v(h) + v(h) = {A}_h^v v_h \ \mbox{ in } \Omega,\\
%\displaystyle\frac{\partial v(h)}{\partial\n}=0 \ \mbox{ on }  \partial\Omega,
%\end{array}
%\right.
%\end{equation} 
%and 
%\begin{equation}\label{B201new}
%\left\{
%\begin{array}{lll}
%-\nabla(\nabla\cdot {\boldsymbol \sigma}(h)) + \mbox{rot(rot }{\boldsymbol \sigma}(h)\mbox{)} + {\boldsymbol \sigma}(h)  = A_h^{\boldsymbol \sigma} {\boldsymbol \sigma}_h \ \ \mbox{ in }  \Omega,\\
%{\boldsymbol \sigma}(h)\cdot \n=0, \ \ \left[\mbox{rot }{\boldsymbol \sigma} (h)\times \n\right]_{tang}=0 \ \mbox{ on }  \partial\Omega.
%\end{array}
%\right.
%\end{equation}
In particular, from (\ref{H2us1-a})-(\ref{H2us1-b}), 
\begin{equation}\label{e0ppp}
 \Vert u(h)\Vert_{2}\leq C\Vert \widehat{A}_h u_h\Vert_{0}, \ \   \Vert v(h)\Vert_{2}\leq C\Vert {A}_h {v}_h\Vert_{0}  \ \ \mbox{ and } \ \ \Vert {\boldsymbol \sigma}(h)\Vert_{2}\leq C\Vert B_h {\boldsymbol \sigma}_h\Vert_{0}.
\end{equation}
We are going to prove \eqref{pd0b}, because \eqref{pd0a} can be proved analogously. 
Now, by applying \eqref{SP01} and  \eqref{16inv1}, we decompose the $W^{1,6}$-norm as:
%\begin{equation}\label{e0p1}
%\Vert u_h\Vert_{W^{1,6}}\leq \Vert u_h - \mathcal{R}^u_h u(h) \Vert_{W^{1,6}} + \Vert \mathcal{R}^u_h u(h) - u(h)\Vert_{W^{1,6}} + \Vert u(h) \Vert_{W^{1,6}}:= I_1 + I_2 + I_3,
%\end{equation}
%\begin{equation}\label{e0pww}
%\Vert v_h\Vert_{W^{1,6}}\leq \Vert v_h - \mathcal{R}^v_h v(h) \Vert_{W^{1,6}} + \Vert \mathcal{R}^v_h v(h) - v(h)\Vert_{W^{1,6}} + \Vert v(h) \Vert_{W^{1,6}}:= H_1 + H_2 + H_3,
%\end{equation}
\begin{eqnarray}\label{e0p2} 
\Vert {\boldsymbol \sigma}_h\Vert_{W^{1,6}}
&\leq& \Vert {\boldsymbol \sigma}_h - \mathcal{R}^{\boldsymbol \sigma}_h {\boldsymbol \sigma}(h) \Vert_{W^{1,6}} 
+ \Vert \mathcal{R}^{\boldsymbol \sigma}_h {\boldsymbol \sigma}(h) \Vert_{W^{1,6}}
%- {\boldsymbol \sigma}(h)\Vert_{W^{1,6}} + \Vert {\boldsymbol \sigma}(h) \Vert_{W^{1,6}}
%:= J_1 +J_2 + J_3. 
\nonumber
\\ 
&\leq& C\, h^{-1} \Vert {\boldsymbol \sigma}_h - \mathcal{R}^{\boldsymbol \sigma}_h {\boldsymbol \sigma}(h) \Vert_{1} 
%+ \Vert \mathcal{R}^{\boldsymbol \sigma}_h {\boldsymbol \sigma}(h) - {\boldsymbol \sigma}(h)\Vert_{W^{1,6}} 
+C\, \Vert {\boldsymbol \sigma}(h) \Vert_{W^{1,6}}.
\end{eqnarray}
By testing $B{\boldsymbol\sigma}(h)$ by any $\bar{\boldsymbol\sigma}_h\in {\boldsymbol\Sigma}_h$ and using (\ref{opnew1})$_2$ we have
\begin{eqnarray}\label{ne00a1}
&(\nabla \cdot {\boldsymbol\sigma}_h,\nabla \cdot \bar{\boldsymbol\sigma}_h) &\!\!\!\!+ (\mbox{rot }{\boldsymbol\sigma}_h ,\mbox{rot }\bar{\boldsymbol\sigma}_h) + ({\boldsymbol\sigma}_h,\bar{\boldsymbol\sigma}_h)\nonumber\\
&&\!\!\!\!=(\nabla \cdot {\boldsymbol\sigma}(h),\nabla \cdot \bar{\boldsymbol\sigma}_h) + (\mbox{rot }{\boldsymbol\sigma}(h),\mbox{rot }\bar{\boldsymbol\sigma}_h) + ({\boldsymbol\sigma}(h),\bar{\boldsymbol\sigma}_h), \ \ \forall \bar{\boldsymbol\sigma}_h\in {\boldsymbol\Sigma}_h.
\end{eqnarray}
By subtracting at both sides of  (\ref{ne00a1}) the terms $(\nabla \cdot \mathcal{R}^{\boldsymbol\sigma}_h {\boldsymbol\sigma}(h), \nabla \cdot \bar{\boldsymbol\sigma}_h)$, $(\mbox{rot}\mathcal{R}^{\boldsymbol\sigma}_h {\boldsymbol\sigma}(h), \mbox{rot } \bar{\boldsymbol\sigma}_h)$ and $(\mathcal{R}^{\boldsymbol\sigma}_h {\boldsymbol\sigma}(h),\bar{\boldsymbol\sigma}_h)$, taking  $\bar{\boldsymbol\sigma}_h= {\boldsymbol\sigma}_h - \mathcal{R}^{\boldsymbol\sigma}_h {\boldsymbol\sigma}(h)\in {\boldsymbol\Sigma}_h$ in (\ref{ne00a1}), and using the H\"older inequality, 
%one can deduce 
\begin{equation}\label{D0a}
\Vert {\boldsymbol\sigma}_h - \mathcal{R}^{\boldsymbol\sigma}_h {\boldsymbol\sigma}(h)\Vert_1\leq C\Vert \mathcal{R}^{\boldsymbol\sigma}_h {\boldsymbol\sigma}(h)-{\boldsymbol\sigma}(h)\Vert_1\leq C h \Vert {\boldsymbol\sigma}(h)\Vert_{2},
\end{equation}
where the interpolation error (\ref{EI02}) was used in the last inequality. Finally, using (\ref{e0ppp}), (\ref{e0p2}) and (\ref{D0a}),  inequality (\ref{pd0b}) is deduced. 
%Proceeding analogously for $I_i$ and $H_i$ $(i=1,2,3)$,  (\ref{pd0a}) can also be deduced.
\end{proof}

\subsection{Definition of the scheme \textbf{US}}\label{scheme-def}

Taking into account the reformulation (\ref{modelf01}), we consider the following FE backward Euler scheme in the  variables $(u,\boldsymbol\sigma)$ (\emph{Scheme \textbf{US}}, from now on) which is a  first order in time, nonlinear and coupled scheme (hereafter, we denote $\delta_t a^n=
(a^n - a^{n-1})/k$):
\begin{itemize}
\item{\bf Initialization}: We fix $(u^{0}_h,{\boldsymbol \sigma}^{0}_h) =(\mathcal{R}_h^u u_0, \mathcal{R}_h^{\boldsymbol\sigma} (\nabla v_0)) \in  U_h\times {\boldsymbol\Sigma}_h$ and $v^0_h=\mathcal{R}_h^v v_0\in V_h$. 
%Then, $\int_\Omega u^0_h = \int_\Omega u_0 = m_0$.
\item{\bf Time step} n: Given $(u^{n-1}_h,{\boldsymbol \sigma}^{n-1}_h)\in  U_h\times {\boldsymbol\Sigma}_h$, compute $(u^{n}_h,{\boldsymbol \sigma}^{n}_h)\in  U_h\times {\boldsymbol\Sigma}_h$ solving
\begin{equation}
\left\{
\begin{array}
[c]{lll}%
(\delta_t u^n_h,\bar{u}_h) + (\nabla u^n_h, \nabla \bar{u}_h) +(u^n_h{\boldsymbol \sigma}^n_h,\nabla \bar{u}_h)=0, \ \ \forall \bar{u}_h\in U_h,\\
(\delta_t {\boldsymbol \sigma}^n_h,\bar{\boldsymbol \sigma}_h) +
( B_h {\boldsymbol \sigma}^n_h, \bar{\boldsymbol
\sigma}_h)  -
2(u^n_h\nabla u^n_h,\bar{\boldsymbol \sigma}_h)=0,\ \ \forall
\bar{\boldsymbol \sigma}_h\in {\boldsymbol\Sigma}_h.
\end{array}
\right.  \label{modelf02}
\end{equation}
\end{itemize}

Once the scheme \textbf{US} is solved,  
%given $v^{n-1}_h\in V_h$,  
$v^n_h=v^n_h((u^n_h)^2) \in V_h$ can be recovered by solving: 
\begin{equation}\label{edovf}
(\delta_t v^n_h,\bar{v}_h)  +( {A}_h v^n_h,\bar{v}_h)  =((u^n_h)^2,\bar{v}_h), \ \ \forall
\bar{v}_h\in V_h .
\end{equation}
% where operator ${A}_h^v$ has been defined in (\ref{opnew1}). 
Lax-Milgram theorem implies that there exists a unique $v^n_h \in V_h$ solution of (\ref{edovf}).

\begin{obs}\label{PoV}
By using the mass-lumping technique  in all terms of (\ref{edovf}) excepting the self-diffusion term $(\nabla v^n_h,\nabla\bar{v}_h)$, approximating by $\mathbb{P}_1$-continuous FE and imposing an acute triangulation (all angles of the triangles or tetrahedra must be at most $\pi/2$), one has  that if $v^{n-1}_h\geq 0$ then $v^n_h\geq 0$. However, at least in all numerical simulations that we have made without using mass-lumping, we have not found any example in which, starting with $v^0_h\geq 0$ we obtain $v^n_h(\mathbf{a}_i)<0$, for some $n>0$ and $\mathbf{a}_i$. 
\end{obs}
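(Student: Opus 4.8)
The plan is to prove the claim by induction on $n$, turning the inductive step into a statement about the algebraic linear system satisfied by the nodal values of $v^n_h$; this is a standard discrete-maximum-principle (monotone scheme) argument. So assume $v^{n-1}_h\geq 0$ in $\overline{\Omega}$; since we work with $\mathbb{P}_1$-continuous elements this is equivalent to $v^{n-1}_h(\mathbf{a}_i)\geq 0$ for every node $\mathbf{a}_i\in\mathcal{N}_h$. Let $\{\varphi_i\}_{i\in\mathcal{I}}$ be the Lagrange nodal basis of $V_h$ and let $(\cdot,\cdot)_h$ denote the lumped inner product produced by the vertex quadrature rule, so that $(w_h,\varphi_i)_h=\omega_i\,w_h(\mathbf{a}_i)$ with $\omega_i>0$. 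Testing the mass-lumped version of (\ref{edovf}) with $\bar v_h=\varphi_i$,
\begin{equation*}
\frac{1}{k}(v^n_h,\varphi_i)_h + (\nabla v^n_h,\nabla\varphi_i) + (v^n_h,\varphi_i)_h = ((u^n_h)^2,\varphi_i)_h + \frac{1}{k}(v^{n-1}_h,\varphi_i)_h,
\end{equation*}
and writing $v^n_h=\sum_j V^n_j\varphi_j$, we obtain a square linear system $\mathcal{A}\,\mathbf{V}^n=\mathbf{b}$, where $\mathcal{A}=\big(\tfrac{1}{k}+1\big)M_L + K$, with $M_L=\mathrm{diag}\big(\omega_i\big)$ the diagonal (strictly positive) lumped mass matrix and $K=\big((\nabla\varphi_j,\nabla\varphi_i)\big)_{i,j}$ the exact $\mathbb{P}_1$ stiffness matrix, and $\mathbf{b}_i=((u^n_h)^2,\varphi_i)_h+\tfrac1k(v^{n-1}_h,\varphi_i)_h$.

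The two facts to establish are then: (i) $\mathcal{A}$ is a nonsingular M-matrix, hence $\mathcal{A}^{-1}\geq 0$ entrywise; and (ii) $\mathbf{b}\geq 0$ entrywise. Fact (ii) is immediate from vertex quadrature: $(v^{n-1}_h,\varphi_i)_h=\omega_i\,v^{n-1}_h(\mathbf{a}_i)\geq 0$ by the induction hypothesis, and $((u^n_h)^2,\varphi_i)_h=\omega_i\,(u^n_h(\mathbf{a}_i))^2\geq 0$ trivially. For fact (i): $M_L$ is diagonal with strictly positive entries; the diagonal entries $K_{ii}=\Vert\nabla\varphi_i\Vert_0^2$ are nonnegative; and the key point is that the off-diagonal entries $K_{ij}=(\nabla\varphi_j,\nabla\varphi_i)$, $i\neq j$, are $\leq 0$ exactly when the triangulation is non-obtuse (every interior angle of each triangle, resp. every dihedral angle of each tetrahedron, is $\leq\pi/2$), which follows from the classical element-by-element computation of $\int_K\nabla\varphi_i\cdot\nabla\varphi_j$ on a simplex. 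Moreover $\mathcal{A}$ is strictly diagonally dominant: the row sums of $K$ vanish since $\sum_j\varphi_j\equiv 1$ gives $\sum_j K_{ij}=(\nabla 1,\nabla\varphi_i)=0$, so $\sum_{j\neq i}|\mathcal{A}_{ij}|=-\sum_{j\neq i}K_{ij}=K_{ii}<\mathcal{A}_{ii}=(\tfrac1k+1)\omega_i+K_{ii}$. A strictly diagonally dominant matrix with positive diagonal and nonpositive off-diagonal entries is a nonsingular M-matrix, so $\mathcal{A}^{-1}\geq 0$ and therefore $\mathbf{V}^n=\mathcal{A}^{-1}\mathbf{b}\geq 0$, i.e. $v^n_h\geq 0$. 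Since $v^0_h\geq 0$ is assumed, induction closes the argument.

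The main obstacle is fact (i), and more precisely the link between the sign pattern of $K$ and the mesh geometry: the sign of the off-diagonal stiffness entries is precisely what forces the hypothesis on the interior/dihedral angles, and in three dimensions one must be careful that "non-obtuse" refers to the dihedral angles of the tetrahedra and not merely to the planar angles of their faces. I would also stress why the zero-order term $(v^n_h,\bar v_h)$ and the production term must be lumped while the Laplacian must not: the consistent mass matrix of $\mathbb{P}_1$ elements has strictly positive off-diagonal entries, which would destroy the M-matrix sign pattern, so keeping every term lumped except the self-diffusion one is exactly what makes $\mathcal{A}$ monotone. The remaining points — the vertex-quadrature identities, nonsingularity of $\mathcal{A}$, and the equivalence between nodal nonnegativity and pointwise nonnegativity for $\mathbb{P}_1$ functions — are routine.
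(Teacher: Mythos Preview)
Your argument is correct and is exactly the standard discrete-maximum-principle reasoning that the remark is alluding to; note, however, that in the paper this statement is left as a remark and no proof is given, so there is nothing to compare against beyond the informal indication that mass-lumping, $\mathbb{P}_1$ elements and a non-obtuse mesh are the right ingredients. Your write-up supplies precisely the missing details: the M-matrix structure of $\mathcal{A}=(\tfrac1k+1)M_L+K$ via strict diagonal dominance (using $\sum_j K_{ij}=0$) together with the nonpositivity of the off-diagonal stiffness entries under the angle condition, and the nonnegativity of the lumped right-hand side. Your caveat that in 3D the relevant angles are the dihedral ones, and your explanation of why the zero-order and source terms must be lumped while the diffusion term must not, are both pertinent and accurate. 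One minor point: the remark asserts only the one-step implication ``$v^{n-1}_h\ge 0\Rightarrow v^n_h\ge 0$'', so the inductive wrapper is not strictly needed (and in fact $v^0_h=\mathcal{R}_h^v v_0\ge 0$ is not automatic from $v_0\ge 0$, since the $H^1$-projection need not preserve sign); your core argument already delivers exactly what the remark claims.
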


\subsection{Conservation, Solvability, Energy-Stability and Convergence}

Assuming that the functions $\bar{u}_h=1\in U_h$ and $\bar{v}_h=1\in V_h$,  one can deduce that the scheme \textbf{US} conserves in time the total mass $\int_\Omega u^n_h$, that is,
\begin{equation*}
\int_\Omega u^n_h=\int_\Omega u^{n-1}_h=\cdot\cdot\cdot=\int_\Omega
u^{0}_h,
\end{equation*}
and  the following behavior of $\int_\Omega v^n_h$ holds: 
\begin{equation*} 
\delta_t \left(\int_\Omega v^n_h \right)= \int_\Omega (u^n_h)^2  - \int_\Omega v^n_h.
\end{equation*}
Now, we establish some results concerning to the solvability and energy-stability of the scheme~\textbf{US}, but we will omit their proofs because those follow the same ideas given in \cite{FMD} (Theorem 4.4 and Lemma 4.7, respectively).
\begin{tma} {\bf(Unconditional existence and conditional uniqueness)} \label{USus}
There exists \linebreak{$(u^n_h,{\boldsymbol\sigma}^n_h) \in  U_h\times {\boldsymbol\Sigma}_h$} solution of the scheme \textbf{US}. Moreover, if 
\begin{equation}\label{uniq01}
k\Vert (u^n_h,{\boldsymbol \sigma}^n_h)
\Vert_{1}^4 \quad \hbox{is small enough,}
\end{equation}
then the solution is unique.
\end{tma}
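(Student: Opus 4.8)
Both assertions follow the corresponding arguments of \cite{FMD}; I outline the plan.

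\textbf{Existence.} For fixed $n$, problem (\ref{modelf02}) is a finite-dimensional nonlinear system for the coefficients of $(u^n_h,{\boldsymbol\sigma}^n_h)$ in bases of $U_h\times{\boldsymbol\Sigma}_h$, so I would proceed by induction on $n$ (starting from the prescribed $(u^0_h,{\boldsymbol\sigma}^0_h)$) using the classical topological consequence of Brouwer's theorem: a continuous map $\mathcal{P}$ on a finite-dimensional inner-product space with $\langle\mathcal{P}(\xi),\xi\rangle>0$ on a sphere $\{\Vert\xi\Vert=\rho\}$ has a zero inside that sphere. Given $(u^{n-1}_h,{\boldsymbol\sigma}^{n-1}_h)$, define $\mathcal{P}(u_h,{\boldsymbol\sigma}_h)=(w_h,{\boldsymbol\tau}_h)\in U_h\times{\boldsymbol\Sigma}_h$ by letting $w_h$, resp. ${\boldsymbol\tau}_h$, be the $L^2$-Riesz representative in $U_h$, resp. ${\boldsymbol\Sigma}_h$, of the left-hand side of (\ref{modelf02})$_1$, resp. of $\tfrac12\times$(\ref{modelf02})$_2$, with $\delta_t u_h=(u_h-u^{n-1}_h)/k$ and $\delta_t{\boldsymbol\sigma}_h=({\boldsymbol\sigma}_h-{\boldsymbol\sigma}^{n-1}_h)/k$; a zero of $\mathcal{P}$ is exactly a solution of (\ref{modelf02}) at step $n$. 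Evaluating $\langle\mathcal{P}(u_h,{\boldsymbol\sigma}_h),(u_h,{\boldsymbol\sigma}_h)\rangle$, the crucial cancellation occurs: since $(u_h{\boldsymbol\sigma}_h,\nabla u_h)=(u_h\nabla u_h,{\boldsymbol\sigma}_h)$, the coupling term of (\ref{modelf02})$_1$ exactly cancels $-2(u_h\nabla u_h,\tfrac12{\boldsymbol\sigma}_h)$. Using $(\delta_t a,a)\ge\tfrac1{2k}(\Vert a\Vert_0^2-\Vert a^{n-1}\Vert_0^2)$ and $(B_h{\boldsymbol\sigma}_h,{\boldsymbol\sigma}_h)=\Vert{\boldsymbol\sigma}_h\Vert_1^2$ (equivalent norm on $\H^1_{\sigma}(\Omega)$), one obtains
\[
\langle\mathcal{P}(u_h,{\boldsymbol\sigma}_h),(u_h,{\boldsymbol\sigma}_h)\rangle\ \ge\ \tfrac1k\Vert u_h\Vert_0^2+\Vert\nabla u_h\Vert_0^2+\tfrac1{2k}\Vert{\boldsymbol\sigma}_h\Vert_0^2+\tfrac12\Vert{\boldsymbol\sigma}_h\Vert_1^2-\tfrac1k\Vert u^{n-1}_h\Vert_0\Vert u_h\Vert_0-\tfrac1{2k}\Vert{\boldsymbol\sigma}^{n-1}_h\Vert_0\Vert{\boldsymbol\sigma}_h\Vert_0,
\]
which is strictly positive on $\{\Vert u_h\Vert_0^2+\Vert{\boldsymbol\sigma}_h\Vert_0^2=\rho^2\}$ for $\rho$ large (depending only on $k$ and $(u^{n-1}_h,{\boldsymbol\sigma}^{n-1}_h)$). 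Brouwer then gives a zero of $\mathcal{P}$. Since this works for every $k>0$, existence is unconditional.

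\textbf{Uniqueness.} Let $(u^n_h,{\boldsymbol\sigma}^n_h)$ and $(\tilde u^n_h,\tilde{\boldsymbol\sigma}^n_h)$ be two solutions at step $n$ with the same previous data, and put $u:=u^n_h-\tilde u^n_h$, ${\boldsymbol\sigma}:={\boldsymbol\sigma}^n_h-\tilde{\boldsymbol\sigma}^n_h$. Subtract the two copies of (\ref{modelf02}), test the first difference with $u$ and the second with $\tfrac12{\boldsymbol\sigma}$, and add. The discrete-time terms give $\tfrac1k\Vert u\Vert_0^2+\tfrac1{2k}\Vert{\boldsymbol\sigma}\Vert_0^2$, the diffusion terms give $\Vert\nabla u\Vert_0^2+\tfrac12\Vert{\boldsymbol\sigma}\Vert_1^2$, and splitting $u^n_h{\boldsymbol\sigma}^n_h-\tilde u^n_h\tilde{\boldsymbol\sigma}^n_h=u\,{\boldsymbol\sigma}^n_h+\tilde u^n_h\,{\boldsymbol\sigma}$ and $u^n_h\nabla u^n_h-\tilde u^n_h\nabla\tilde u^n_h=u\,\nabla u^n_h+\tilde u^n_h\,\nabla u$, the terms in $\tilde u^n_h$ cancel because $(\tilde u^n_h{\boldsymbol\sigma},\nabla u)=(\tilde u^n_h\nabla u,{\boldsymbol\sigma})$. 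One is left with
\[
\tfrac1k\Vert u\Vert_0^2+\Vert\nabla u\Vert_0^2+\tfrac1{2k}\Vert{\boldsymbol\sigma}\Vert_0^2+\tfrac12\Vert{\boldsymbol\sigma}\Vert_1^2=-(u\,{\boldsymbol\sigma}^n_h,\nabla u)+(u\,\nabla u^n_h,{\boldsymbol\sigma}).
\]
For the right-hand side I would use H\"older ($\tfrac12=\tfrac13+\tfrac16$), the embedding $H^1(\Omega)\hookrightarrow L^6(\Omega)$ ($n\le3$), the interpolation inequality (\ref{in3D}), and mass conservation $\int_\Omega u=0$ (so that $\Vert u\Vert_1=\Vert\nabla u\Vert_0$), obtaining $|(u\,{\boldsymbol\sigma}^n_h,\nabla u)|\le C\Vert{\boldsymbol\sigma}^n_h\Vert_1\Vert u\Vert_0^{1/2}\Vert\nabla u\Vert_0^{3/2}$ and $|(u\,\nabla u^n_h,{\boldsymbol\sigma})|\le C\Vert u^n_h\Vert_1\Vert u\Vert_0^{1/2}\Vert\nabla u\Vert_0^{1/2}\Vert{\boldsymbol\sigma}\Vert_1$. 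Young's inequality then absorbs $\Vert\nabla u\Vert_0^2$ and $\Vert{\boldsymbol\sigma}\Vert_1^2$ into the left side at the price of $C\Vert(u^n_h,{\boldsymbol\sigma}^n_h)\Vert_1^4\Vert u\Vert_0^2$, yielding
\[
\Big(\tfrac1k-C\Vert(u^n_h,{\boldsymbol\sigma}^n_h)\Vert_1^4\Big)\Vert u\Vert_0^2+\tfrac12\Vert\nabla u\Vert_0^2+\tfrac1{2k}\Vert{\boldsymbol\sigma}\Vert_0^2+\tfrac14\Vert{\boldsymbol\sigma}\Vert_1^2\le0,
\]
so that if $k\Vert(u^n_h,{\boldsymbol\sigma}^n_h)\Vert_1^4$ is small enough that $Ck\Vert(u^n_h,{\boldsymbol\sigma}^n_h)\Vert_1^4<1$, then $u=0$ and hence ${\boldsymbol\sigma}=0$.

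\textbf{Main obstacle.} The delicate points are: (i) in the existence part, choosing the test pair $(u_h,\tfrac12{\boldsymbol\sigma}_h)$ that makes the chemotaxis term and the quadratic-production term cancel exactly, which is precisely what renders the a priori bound (and the scheme) unconditional; and (ii) in the uniqueness part, the careful bookkeeping of the 3D Sobolev/interpolation exponents together with the use of $\int_\Omega u=0$ (to replace $\Vert u\Vert_1$ by $\Vert\nabla u\Vert_0$ via the equivalent norm of Section 2), so that the absorbed remainder carries exactly the threshold quantity $k\Vert(u^n_h,{\boldsymbol\sigma}^n_h)\Vert_1^4$. The remaining computations are routine applications of H\"older and Young inequalities.
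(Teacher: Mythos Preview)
Your proposal is correct and follows essentially the same strategy that the paper invokes (it omits the proof here and refers to Theorem~4.3 of \cite{FMD}): a Brouwer-type topological argument for existence, exploiting the structural cancellation obtained by testing with $(u_h,\tfrac12{\boldsymbol\sigma}_h)$, and an energy estimate on the difference of two solutions for uniqueness, where the same cancellation (now in the $\tilde u^n_h$-terms) together with (\ref{in3D}), the zero-mean of $u$, and Young's inequality produces precisely the threshold $k\Vert(u^n_h,{\boldsymbol\sigma}^n_h)\Vert_1^4$. The bookkeeping of exponents and constants you indicate is the standard one and matches the computations used elsewhere in the paper (cf.\ the proofs of Theorems~\ref{TCPM} and~\ref{CNMtma}).
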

\begin{obs}
In the case of 2D domains, since one has  estimate (\ref{strong01}) below, then the uniqueness restriction (\ref{uniq01}) can be relaxed to $k K_0^2$ small enough, where $K_0$ is a constant depending on data $(\Omega,u_0,{\boldsymbol\sigma}_0)$, but independent of $(k,h)$ and $n$. 
\end{obs}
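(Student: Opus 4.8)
The plan is to obtain the Remark as a direct corollary of Theorem~\ref{USus} and the strong a~priori estimate (\ref{strong01}), so that the argument is essentially algebraic once the latter is available. The first step is simply to rewrite the uniqueness hypothesis (\ref{uniq01}) as
\begin{equation*}
k\,\Vert (u^n_h,{\boldsymbol \sigma}^n_h)\Vert_{1}^4 = k\,\big(\Vert (u^n_h,{\boldsymbol \sigma}^n_h)\Vert_{1}^2\big)^2,
\end{equation*}
which makes explicit that a uniform control of the \emph{square} of the $H^1$-norm is what governs the condition. Concretely, suppose one has a bound of the form $\Vert (u^n_h,{\boldsymbol \sigma}^n_h)\Vert_{1}^2\le K_0$ holding for every $n\ge 0$, with $K_0$ depending only on the data $(\Omega,u_0,{\boldsymbol\sigma}_0)$ and independent of $(k,h)$ and $n$. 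Then for all $n$
\begin{equation*}
k\,\Vert (u^n_h,{\boldsymbol \sigma}^n_h)\Vert_{1}^4 \le k\,K_0^2,
\end{equation*}
so that smallness of the single scalar quantity $k\,K_0^2$ implies smallness of the left-hand side of (\ref{uniq01}) simultaneously at \emph{all} time steps; Theorem~\ref{USus} then yields uniqueness. The whole content of the Remark thus reduces to exhibiting such a constant $K_0$.

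The second step is to identify $K_0$. In two dimensions the strong estimate (\ref{strong01}) is \emph{unconditional} and, combined with the uniform discrete Gronwall machinery (Lemma~\ref{LGD001} and Corollary~\ref{CorUnif}), it furnishes precisely a uniform-in-$n$ bound on $\Vert (u^n_h,{\boldsymbol \sigma}^n_h)\Vert_{1}^2$ by a data-dependent constant that does not deteriorate as $k,h\to 0$. Setting $K_0$ equal to this constant and substituting into the display above completes the proof of the relaxed criterion ``$k\,K_0^2$ small''. Note that, crucially, $K_0$ does not involve the iteration index $n$, which is what allows the smallness to be imposed once and for all rather than step by step.

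The genuine obstacle — and the reason the Remark is confined to 2D — lies entirely in the strong estimate (\ref{strong01}), proved later in the paper; beyond invoking it no further work is required for the Remark itself. The dimensional threshold enters through the interpolation used to absorb the chemotactic coupling: in 2D a Ladyzhenskaya-type inequality $\Vert u\Vert_{L^4}^2\le C\Vert u\Vert_0\Vert u\Vert_1$ scales well enough to close the $H^1$ estimates without any smallness assumption, producing the parameter-independent $K_0$; in 3D the weaker interpolation (\ref{in3D}) yields only a conditional strong estimate, no such uniform $K_0$ is available, and one is forced to retain the bare condition (\ref{uniq01}) on the actual (possibly $n$-dependent) norm. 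The difficulty has therefore been relocated to that forthcoming bound, and the Remark is the clean packaging of its consequence for uniqueness.
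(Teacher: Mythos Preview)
Your proposal is correct and follows exactly the reasoning the paper intends: the Remark is stated without proof in the paper, merely pointing to the forthcoming estimate (\ref{strong01}), and you have spelled out the (essentially one-line) algebra that if $\Vert (u^n_h,{\boldsymbol\sigma}^n_h)\Vert_1^2\le K_0$ uniformly in $n$ then $k\Vert (u^n_h,{\boldsymbol\sigma}^n_h)\Vert_1^4\le kK_0^2$, so smallness of $kK_0^2$ enforces (\ref{uniq01}). Your discussion of the 2D/3D dichotomy and the reference to the Gronwall machinery is extra context rather than part of the argument proper, but it is accurate and consistent with how the paper later justifies (\ref{strong01}).
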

\begin{obs}\label{inddom01}
In 3D domains, using the inverse inequality $\Vert u_h\Vert_{1}\leq \frac{C}{h}\Vert u_h\Vert_{0}$ (see Lemma 4.5.3 in \cite{Brenner}, p. 111) and estimate (\ref{weak01}) below, we have that 
$$
\Vert (u^n_h,{\boldsymbol \sigma}^n_h)
\Vert_{1}^4\leq \frac{C}{h^4} \Vert (u^n_h,{\boldsymbol \sigma}^n_h)
\Vert_0^4\leq \frac{C}{h^4} C_0^2,
$$
and therefore, the uniqueness restriction (\ref{uniq01}) can be rewritten as 
\begin{equation}\label{uua}
\frac{k }{h^4} \ \mbox{ small enough}.
\end{equation}
%where $C_1$ is a positive constant depending on data $(\Omega,u_0,{\boldsymbol\sigma}_0)$, but independent of $n$. 
\end{obs}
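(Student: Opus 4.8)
The plan is to chain the inverse inequality together with the uniform weak a priori bound (\ref{weak01}) and then substitute the resulting estimate into the uniqueness condition (\ref{uniq01}). The motivation for this route is that, in 3D, one does not have at hand a mesh-independent $H^1$-bound on the discrete solution (which in 2D is supplied directly by the strong estimate (\ref{strong01}), giving the constant $K_0$ of the previous remark); so instead I would trade one power of $h$ for the $H^1$-norm via an inverse estimate and control the remaining $L^2$-norm by the weak estimate.

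First I would establish the inverse inequality on the full product space $U_h\times{\boldsymbol\Sigma}_h$. The scalar estimate $\Vert u_h\Vert_{1}\leq \frac{C}{h}\Vert u_h\Vert_{0}$ holds on $U_h$ by quasi-uniformity of $\{\mathcal{T}_h\}_{h>0}$. For the vector component I would use that the norm $\Vert{\boldsymbol\sigma}_h\Vert_1^2=\Vert{\boldsymbol\sigma}_h\Vert_0^2+\Vert\mbox{rot }{\boldsymbol\sigma}_h\Vert_0^2+\Vert\nabla\cdot{\boldsymbol\sigma}_h\Vert_0^2$ is equivalent to the standard $\H^1$-norm on $\H^1_\sigma(\Omega)$, so that applying the componentwise inverse estimate yields $\Vert{\boldsymbol\sigma}^n_h\Vert_{1}\leq\frac{C}{h}\Vert{\boldsymbol\sigma}^n_h\Vert_{0}$ as well. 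Adding the squared versions and squaring once more produces the first inequality claimed,
$$
\Vert(u^n_h,{\boldsymbol\sigma}^n_h)\Vert_1^4\leq\frac{C}{h^4}\Vert(u^n_h,{\boldsymbol\sigma}^n_h)\Vert_0^4 .
$$

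Next I would invoke the weak estimate (\ref{weak01}), which furnishes a bound $\Vert(u^n_h,{\boldsymbol\sigma}^n_h)\Vert_0^2\leq C_0$ that is uniform in $n$ and independent of the discrete parameters $k,h$, depending only on the data $(\Omega,u_0,{\boldsymbol\sigma}_0)$; squaring gives $\Vert(u^n_h,{\boldsymbol\sigma}^n_h)\Vert_0^4\leq C_0^2$, whence $\Vert(u^n_h,{\boldsymbol\sigma}^n_h)\Vert_1^4\leq\frac{C}{h^4}C_0^2$. Substituting this into the left-hand side of (\ref{uniq01}) yields
$$
k\Vert(u^n_h,{\boldsymbol\sigma}^n_h)\Vert_1^4\leq\frac{k\,C_1}{h^4},\qquad C_1:=C\,C_0^2 ,
$$
so that smallness of $kC_1/h^4$ is a sufficient condition for (\ref{uniq01}), with $C_1$ depending only on the data and independent of $n$, which is precisely (\ref{uua}). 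The computation is elementary; the only point requiring genuine care is verifying that the weak bound (\ref{weak01}) is truly uniform in $n$ (rather than growing with the final time), since it is exactly this uniformity that allows $C_1$ to be taken independent of the time index and turns (\ref{uua}) into a purely $(k,h)$-mesh condition.
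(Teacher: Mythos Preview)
Your proposal is correct and follows exactly the same approach as the paper: the remark itself already contains its full justification (inverse inequality, then the weak $L^2$-bound (\ref{weak01}), then substitution into (\ref{uniq01})), and you have reproduced that chain with some added detail on why the inverse estimate also applies to the ${\boldsymbol\sigma}_h$-component and why $C_0$ is uniform in $n$. There is nothing further to add.
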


\begin{defi}\label{enesf}
A numerical scheme with solution $(u_n,{\boldsymbol \sigma}_n)$ is called energy-stable with respect to the energy 
\begin{equation*}
\mathcal{E}(u,{\boldsymbol\sigma})= \frac{1}{2}\Vert u\Vert_0^2 + \frac{1}{4}\Vert {\boldsymbol\sigma}\Vert_0^2,
\end{equation*}
 if this energy is time decreasing, that is 
\begin{equation*}
\mathcal{E}(u^n_h,{\boldsymbol \sigma}^n_h)\leq \mathcal{E}(u^{n-1}_h,{\boldsymbol
\sigma}^{n-1}_h), \ \ \forall n.
\end{equation*}
\end{defi}

\begin{lem} {\bf (Unconditional energy-stability)} \label{estinc1}
The scheme \textbf{US} is unconditionally energy-stable with res\-pect to $\mathcal{E}(u,{\boldsymbol\sigma})$. In fact, for any $(u^n_h,{\boldsymbol
\sigma}^n_h)$ solution of the scheme \textbf{US}, the following discrete energy law holds
\begin{eqnarray}\label{lawenerfydisce}
&\delta_t \mathcal{E}(u^n_h,{\boldsymbol \sigma}^n_h)&\!\!\!\!\! +
\frac{k}{2} 
\Vert \delta_t u^n_h\Vert_{0}^2 + \frac{k}{4} \Vert \delta_t {\boldsymbol \sigma}^n_h\Vert_{0}^2+ \Vert \nabla u^n_h\Vert_{0}^{2} +
\displaystyle\frac{1}{2}\Vert  {\boldsymbol
\sigma}^n_h\Vert_{1}^{2}=0.
\end{eqnarray}
%where $\mathcal{E}(u^n_h,{\boldsymbol \sigma}^n_h)$ was defined in (\ref{ener01}).
\end{lem}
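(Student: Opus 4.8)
The plan is to derive the discrete energy law (\ref{lawenerfydisce}) by testing the scheme \textbf{US} with carefully chosen discrete test functions and then deduce the stability inequality (\ref{stabf02}) as an immediate consequence, since all extra terms are nonnegative. Concretely, in equation (\ref{modelf02})$_1$ I would take $\bar u_h = u^n_h\in U_h$, and in (\ref{modelf02})$_2$ I would take $\bar{\boldsymbol\sigma}_h = \tfrac12\,{\boldsymbol\sigma}^n_h\in{\boldsymbol\Sigma}_h$ (the factor $1/2$ matching the $\tfrac14\Vert{\boldsymbol\sigma}\Vert_0^2$ weight in the energy $\mathcal{E}$). Adding the two resulting identities, the crucial observation is that the two chemotaxis/production coupling terms cancel: $(u^n_h{\boldsymbol\sigma}^n_h,\nabla u^n_h) - 2\cdot\tfrac12(u^n_h\nabla u^n_h,{\boldsymbol\sigma}^n_h)=0$, exactly as in the continuous energy estimate. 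This cancellation is the heart of why the reformulation in the variable ${\boldsymbol\sigma}=\nabla v$ produces an energy-stable scheme, and it is the point to state most carefully.

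Next I would handle the discrete time-derivative terms. Using the elementary algebraic identity valid for the backward Euler difference,
\begin{equation*}
(\delta_t a^n, a^n) = \frac{1}{2}\delta_t\Vert a^n\Vert_0^2 + \frac{k}{2}\Vert \delta_t a^n\Vert_0^2,
\end{equation*}
applied with $a^n=u^n_h$ and with $a^n={\boldsymbol\sigma}^n_h$ (the latter weighted by $1/2$), I obtain $\delta_t\big(\tfrac12\Vert u^n_h\Vert_0^2\big) + \tfrac{k}{2}\Vert\delta_t u^n_h\Vert_0^2$ from the first equation and $\delta_t\big(\tfrac14\Vert{\boldsymbol\sigma}^n_h\Vert_0^2\big) + \tfrac{k}{4}\Vert\delta_t{\boldsymbol\sigma}^n_h\Vert_0^2$ from the second. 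Together these reconstruct $\delta_t\mathcal{E}(u^n_h,{\boldsymbol\sigma}^n_h)$ plus the two numerical-dissipation terms appearing in (\ref{lawenerfydisce}). The diffusion term in (\ref{modelf02})$_1$ gives directly $(\nabla u^n_h,\nabla u^n_h)=\Vert\nabla u^n_h\Vert_0^2$, and the term $\tfrac12(B_h{\boldsymbol\sigma}^n_h,{\boldsymbol\sigma}^n_h)$ equals, by the definition (\ref{opnew1})$_2$ of $B_h$, $\tfrac12\big(\Vert\nabla\cdot{\boldsymbol\sigma}^n_h\Vert_0^2+\Vert\mathrm{rot}\,{\boldsymbol\sigma}^n_h\Vert_0^2+\Vert{\boldsymbol\sigma}^n_h\Vert_0^2\big)=\tfrac12\Vert{\boldsymbol\sigma}^n_h\Vert_1^2$, using the equivalent norm on $\H^1_\sigma(\Omega)$ recalled in Section 2. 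Collecting everything yields exactly (\ref{lawenerfydisce}).

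Finally, since $k>0$ and each of $\tfrac{k}{2}\Vert\delta_t u^n_h\Vert_0^2$, $\tfrac{k}{4}\Vert\delta_t{\boldsymbol\sigma}^n_h\Vert_0^2$, $\Vert\nabla u^n_h\Vert_0^2$, $\tfrac12\Vert{\boldsymbol\sigma}^n_h\Vert_1^2$ is nonnegative, the identity (\ref{lawenerfydisce}) forces $\delta_t\mathcal{E}(u^n_h,{\boldsymbol\sigma}^n_h)\le 0$, i.e.\ $\mathcal{E}(u^n_h,{\boldsymbol\sigma}^n_h)\le\mathcal{E}(u^{n-1}_h,{\boldsymbol\sigma}^{n-1}_h)$ for all $n$, which is precisely the unconditional energy-stability claimed. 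I expect no serious obstacle here: the scheme was designed so that the nonlinear terms cancel exactly in this test, so the only real content is bookkeeping the constants $1$ vs.\ $1/2$ correctly so that the coefficients in (\ref{lawenerfydisce}) come out right; one should also note at the outset that $u^n_h$ and ${\boldsymbol\sigma}^n_h$ belong to the discrete spaces $U_h$ and ${\boldsymbol\Sigma}_h$ respectively, so they are admissible test functions. (This is why the authors can, and do, omit the proof, referring to the analogous Lemma~4.6 in \cite{FMD}.)
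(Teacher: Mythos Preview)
Your proposal is correct and follows exactly the approach the paper has in mind (and defers to \cite[Lemma~4.6]{FMD}): test (\ref{modelf02})$_1$ by $u^n_h$ and (\ref{modelf02})$_2$ by $\tfrac12{\boldsymbol\sigma}^n_h$, exploit the exact cancellation of the coupling terms, apply the backward-Euler identity $(\delta_t a^n,a^n)=\tfrac12\delta_t\Vert a^n\Vert_0^2+\tfrac{k}{2}\Vert\delta_t a^n\Vert_0^2$, and identify $(B_h{\boldsymbol\sigma}^n_h,{\boldsymbol\sigma}^n_h)=\Vert{\boldsymbol\sigma}^n_h\Vert_1^2$ via (\ref{opnew1})$_2$. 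The bookkeeping of the constants is right and there is nothing missing.
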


\begin{obs}
Looking at (\ref{lawenerfydisce}), one can say that  scheme \textbf{US} introduces the following two first order ``numerical dissipation" terms: 
$$\frac{k}{2} 
\Vert \delta_t u^n_h\Vert_{0}^2\quad \hbox{and}  \quad \frac{k}{4} \Vert \delta_t {\boldsymbol \sigma}^n_h\Vert_{0}^2.
$$
\end{obs}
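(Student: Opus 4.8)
The statement is interpretive, so the plan is to justify the two qualifiers attached to $\frac{k}{2}\Vert\delta_t u^n_h\Vert_0^2$ and $\frac{k}{4}\Vert\delta_t{\boldsymbol\sigma}^n_h\Vert_0^2$, namely that they are (i) genuinely dissipative and (ii) of first order in $k$. For the dissipative character I would simply observe that each is a strictly positive constant times a squared $L^2$-norm, hence non-negative, and that in (\ref{lawenerfydisce}) they sit on the left-hand side next to the physical dissipation $\Vert\nabla u^n_h\Vert_0^2 + \frac12\Vert{\boldsymbol\sigma}^n_h\Vert_1^2$. Rearranging (\ref{lawenerfydisce}) as
$$\delta_t\mathcal{E}(u^n_h,{\boldsymbol\sigma}^n_h) = -\Vert\nabla u^n_h\Vert_0^2 - \frac12\Vert{\boldsymbol\sigma}^n_h\Vert_1^2 - \frac{k}{2}\Vert\delta_t u^n_h\Vert_0^2 - \frac{k}{4}\Vert\delta_t{\boldsymbol\sigma}^n_h\Vert_0^2$$
makes the right-hand side only more negative, so these terms strictly accelerate the decay of the discrete energy; comparing with the continuous energy inequality (\ref{wsd}), which has no analogue, confirms that they are produced by the discretization, not by the model — hence the label ``numerical dissipation.''

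Next I would trace their origin and extract their order. They arise from the backward-Euler treatment of $\partial_t$ through the elementary identity $(a-b,a)=\frac12\Vert a\Vert_0^2 - \frac12\Vert b\Vert_0^2 + \frac12\Vert a-b\Vert_0^2$, applied to $(\delta_t u^n_h,u^n_h)$ and $(\delta_t{\boldsymbol\sigma}^n_h,{\boldsymbol\sigma}^n_h)$ when testing (\ref{modelf02}) by $(u^n_h,\frac12{\boldsymbol\sigma}^n_h)$; the quadratic remainders are exactly $\frac{k}{2}\Vert\delta_t u^n_h\Vert_0^2 = \frac{1}{2k}\Vert u^n_h-u^{n-1}_h\Vert_0^2$ and $\frac{k}{4}\Vert\delta_t{\boldsymbol\sigma}^n_h\Vert_0^2 = \frac{1}{4k}\Vert{\boldsymbol\sigma}^n_h-{\boldsymbol\sigma}^{n-1}_h\Vert_0^2$. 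For the order, I would use that $\delta_t u^n_h$ is a consistent approximation of $\partial_t u(t_n)$: under the regularity (\ref{kr3}) a Taylor expansion gives $u^n_h-u^{n-1}_h = k\,\partial_t u(t_n) + O(k^2)$, so $\frac{k}{2}\Vert\delta_t u^n_h\Vert_0^2 = \frac{k}{2}\Vert\partial_t u(t_n)\Vert_0^2 + O(k^2) = O(k)$, and likewise for the ${\boldsymbol\sigma}$-term. Equivalently, multiplying (\ref{lawenerfydisce}) by $k$ and summing telescopes $\delta_t\mathcal{E}$ and leaves a total numerical dissipation $\sum_n \frac12\Vert u^n_h-u^{n-1}_h\Vert_0^2 + \sum_n\frac14\Vert{\boldsymbol\sigma}^n_h-{\boldsymbol\sigma}^{n-1}_h\Vert_0^2 = O(k)$, once the uniform bound $\sum_n k\Vert\delta_t u^n_h\Vert_0^2\le C$ is invoked. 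This matches the first-order temporal accuracy of the backward-Euler discretization and is precisely what the remark asserts.

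The only delicate point, and the one I would flag explicitly to avoid circularity, is that a fully rigorous ``first order'' claim presupposes control of $\Vert\delta_t u^n_h\Vert_0$ (equivalently the consistency bound), which in turn rests on the strong a priori estimates such as (\ref{strong01}) established later. I would therefore present the dissipative sign as unconditional and elementary, and state the order as a consistency statement conditional on that regularity, so that the observation reads as an honest interpretation of (\ref{lawenerfydisce}) rather than a result requiring machinery not yet available at this point in the paper.
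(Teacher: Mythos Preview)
Your justification is correct; the paper itself offers no proof, treating the remark as self-evident by inspection of (\ref{lawenerfydisce}) --- the explicit factor of $k$ signals ``first order'' and the sign signals ``dissipative,'' which is precisely what you unpack. One small slip: the Taylor line $u^n_h-u^{n-1}_h = k\,\partial_t u(t_n)+O(k^2)$ conflates the discrete iterate with the exact solution; the clean argument is the one you give right after via the uniform bound on $\Vert\delta_t u^n_h\Vert_0$ (that bound is (\ref{stdta}) rather than (\ref{strong01})), and your closing caveat already handles the dependence on later estimates.
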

\subsubsection{Uniform weak estimates}
Starting from the (local in time) discrete energy law (\ref{lawenerfydisce}), 
 some global in time estimates for $(u^n_h,{\boldsymbol
\sigma}^n_h)$ 
%solution of the scheme \textbf{US} 
will be obtained. The letters  $C,C_i,K_i$ denote different positive constants depending on the data $(\Omega,u_0, v_0)$, but independent of discrete parameters $(k, h)$ and time step $n$. Hereafter, in order to abbreviate, we introduce the notation: 
$$(\hat u, \hat v)= (u-m_0,v-m_0^2).$$
\vspace{-0.7 cm}
\begin{tma} \label{welem} {\bf(Weak estimates of $(u^n_h,{\boldsymbol \sigma}^n_h)$)}
Let $(u^n_h,{\boldsymbol \sigma}^n_h)$ be a solution of the scheme \textbf{US}. Then, the following estimates hold
\begin{equation}\label{weak01}
\Vert (u^n_h, {\boldsymbol
\sigma}^n_h)\Vert_{0}^{2}
%+ k^2 \underset{m=1}{\overset{n}{\sum}}\Vert (\delta_t u^m_h,\delta_t {\boldsymbol \sigma}^m_h) \Vert_0^2
+ k \underset{m=1}{\overset{n}{\sum}}\Vert (\hat{u}^m_h, {\boldsymbol \sigma}^m_h)\Vert_{1}^2\leq C_0, \ \ \ \forall n\geq 1.
\end{equation}
%\begin{equation}\label{weak02}
%k \underset{m=n_0+1}{\overset{n+n_0}{\sum}}\Vert  (u^m_h , {\boldsymbol \sigma}^m_h)\Vert_{1}^2 \leq C_1 + C_2 (nk),  \ \ \ \forall n\geq 1,
%\end{equation}
%where the integer $n_0 \geq 0$ is arbitrary.
\end{tma}
\begin{proof}
The proof follows as in Theorem 4.9 of \cite{FMD}.
\end{proof}

 In contrast to what happens in the time-discrete scheme corresponding to \textbf{US} (see \cite{FMD}),
 in the fully discrete scheme \textbf{US} it is not clear how to quantify the relation ${\boldsymbol\sigma}^n_h\simeq \nabla v^n_h$. Therefore, the uniform estimates for $v^n_h$ can not be obtained directly from the estimates for ${\boldsymbol\sigma}^n_h$. Alternatively,  uniform weak estimates for $v^n_h$ will be directly obtained from (\ref{edovf}).\\
\begin{lem} {\bf (Weak estimates for $v^n_h$)}
Let $v^n_h$ be the solution of (\ref{edovf}). Then, the following estimate holds
\begin{equation}\label{weak02UVlinvL2}
\Vert v^n_h \Vert_0^2 + k \underset{m=1}{\overset{n}{\sum}}\Vert \hat{v}^m_h \Vert_1^2  \leq K_0, \ \ \ \forall n\geq 1.
\end{equation}
\end{lem}
\begin{proof}
Rewriting (\ref{edovf}) as
\begin{equation}\label{edovf-nnn}
	(\delta_t \hat{v}^n_h,\bar{v}_h)  +( {A}_h \hat {v}^n_h,\bar{v}_h)  =((\hat{u}^n_h+2 m_0)\hat{u}^n_h,\bar{v}_h), \ \ \forall
	\bar{v}_h\in V_h,
\end{equation}
and taking $\bar{v}=\hat{v}^n_h$ in (\ref{edovf-nnn}) one has
\begin{eqnarray*}
&\displaystyle\delta_t \Vert \hat{v}^n_h \Vert_0^2 &\!\!\!\!  
%+ \frac{1}{2k}\Vert  \hat{v}^n_h - \hat{v}^{n-1}_h \Vert_0^2 
+ \Vert \hat{v}^n_h\Vert_1^2\leq C\Vert \hat{u}^n_h + 2 m_0\Vert_{L^{3/2}}^2 \Vert \hat{u}^n_h\Vert_{L^6}^2\leq C \Vert \hat{u}^n_h\Vert_{H^1}^2,
\end{eqnarray*}
from which, adding for $m=1,\cdot \cdot\cdot,n$ and using (\ref{weak01}), one can deduce (\ref{weak02UVlinvL2}).
\end{proof}

\subsubsection{Convergence}
Starting from the previous stability estimates, proceeding as in Theorem 4.11 of \cite{FMD}, the convergence  of the scheme \textbf{US} towards weak solutions as $(k,h)\rightarrow 0$ can be proved. 
Concretely, by introducing the functions:
\begin{itemize}
\item $(\widetilde{u}_{h,k},\widetilde{\boldsymbol\sigma}_{h,k})$ are continuous functions on $[0,+\infty)$, linear on each interval $(t_{n-1},t_n)$ and equal to $(u^n_h,{\boldsymbol\sigma}^n_h)$ at $t=t_n$, $n\geq 0$;
\item $({u}_{h,k},{\boldsymbol\sigma}_{h,k})$ are the piecewise constant functions taking values $(u^{n}_h,{\boldsymbol\sigma}^n_h)$ on $(t_{n-1},t_n]$, $n\geq 1$,
\end{itemize}
then,  the following result holds: 
\begin{tma} {\bf (Convergence of $(u,{\boldsymbol\sigma})$)}\label{NN1}
There exist a subsequence $(k',h')$ of $(k,h)$, with $k',h'\downarrow 0$, and a weak solution $(u,{\boldsymbol\sigma})$ of (\ref{modelf01}) in $(0,+\infty)$, such that $(\widetilde u_{h',k'}-m_0,\widetilde{\boldsymbol\sigma}_{h',k'})$ and $(u_{h',k'}-m_0,{\boldsymbol\sigma}_{h',k'})$  converge to $(u-m_0,{\boldsymbol\sigma})$ weakly-* in $L^\infty(0,+\infty;L^2(\Omega)\times \L^2(\Omega))$, weakly in $L^2(0,+\infty;H^1(\Omega)\times \H^1(\Omega))$ and strongly in $L^2(0,T;L^2(\Omega)\times \L^2(\Omega))$, for any $T>0$. 
\end{tma}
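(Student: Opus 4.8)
The plan is to follow the standard compactness argument for energy-stable finite element discretizations, as in \cite[Theorem~4.11]{FMD}. First I rewrite scheme \textbf{US} in a time-continuous form: putting $u^r_{h,k}(t)=u^n_h$, ${\boldsymbol\sigma}^r_{h,k}(t)={\boldsymbol\sigma}^n_h$ for $t\in(t_{n-1},t_n]$ and noting that $\partial_t\widetilde u_{h,k}(t)=\delta_t u^n_h$, $\partial_t\widetilde{\boldsymbol\sigma}_{h,k}(t)=\delta_t{\boldsymbol\sigma}^n_h$ for $t\in(t_{n-1},t_n)$, the scheme (\ref{modelf02}) becomes, for a.e.\ $t>0$ and all $(\bar u_h,\bar{\boldsymbol\sigma}_h)\in U_h\times{\boldsymbol\Sigma}_h$,
\[
(\partial_t\widetilde u_{h,k},\bar u_h)+(\nabla u^r_{h,k},\nabla\bar u_h)+(u^r_{h,k}{\boldsymbol\sigma}^r_{h,k},\nabla\bar u_h)=0,
\]
\[
(\partial_t\widetilde{\boldsymbol\sigma}_{h,k},\bar{\boldsymbol\sigma}_h)+(B_h{\boldsymbol\sigma}^r_{h,k},\bar{\boldsymbol\sigma}_h)-2(u^r_{h,k}\nabla u^r_{h,k},\bar{\boldsymbol\sigma}_h)=0.
\]

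Next I would collect the uniform bounds. By Theorem~\ref{welem}, the families $\widetilde u_{h,k},u^r_{h,k}$ are bounded in $L^\infty(0,+\infty;L^2(\Omega))\cap L^2(0,T;H^1(\Omega))$ and $\widetilde{\boldsymbol\sigma}_{h,k},{\boldsymbol\sigma}^r_{h,k}$ in $L^\infty(0,+\infty;\L^2(\Omega))\cap L^2(0,T;\H^1_\sigma(\Omega))$, for every $T>0$; moreover, using $\int_{t_{n-1}}^{t_n}\|\widetilde u_{h,k}(t)-u^n_h\|_0^2\,dt=\tfrac{k^3}{3}\|\delta_t u^n_h\|_0^2$ together with (\ref{weak01}), one gets $\|\widetilde u_{h,k}-u^r_{h,k}\|_{L^2(0,T;L^2)}^2\le Ck$ and the analogue for ${\boldsymbol\sigma}$, so the piecewise-linear and piecewise-constant families share the same limits. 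For the time-compactness I would test the rewritten equations with the finite element projections $\bar u_h=\mathcal{R}_h^u\phi$ ($\phi\in H^1(\Omega)$) and $\bar{\boldsymbol\sigma}_h=\mathcal{R}_h^{\boldsymbol\sigma}{\boldsymbol\phi}$ (${\boldsymbol\phi}\in\H^1_\sigma(\Omega)$), bound the projections through (\ref{SP01}) and $H^1\hookrightarrow L^6$, and control $u^r_{h,k}{\boldsymbol\sigma}^r_{h,k}$ and $u^r_{h,k}\nabla u^r_{h,k}$ by parabolic Sobolev interpolation of the previous bounds (in $3$D, e.g.\ $L^\infty(0,T;L^2)\cap L^2(0,T;H^1)\hookrightarrow L^{8/3}(0,T;L^4)$), obtaining $\partial_t\widetilde u_{h,k}$ bounded in $L^{q'}(0,T;H^1(\Omega)')$ and $\partial_t\widetilde{\boldsymbol\sigma}_{h,k}$ bounded in $L^{q'}(0,T;\H^1_\sigma(\Omega)')$, with $q'$ as in (\ref{wsa}). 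An Aubin--Lions--Simon argument plus a diagonal extraction over $T=1,2,\dots$ then produce a subsequence $(k',h')\to0$ and limits $u,{\boldsymbol\sigma}$ such that $\widetilde u_{h',k'},u^r_{h',k'}\to u$ and $\widetilde{\boldsymbol\sigma}_{h',k'},{\boldsymbol\sigma}^r_{h',k'}\to{\boldsymbol\sigma}$ strongly in $L^2(0,T;L^2)$ (hence a.e.), weakly in $L^2(0,T;H^1(\Omega)\times\H^1(\Omega))$ and weakly-$*$ in $L^\infty(0,+\infty;L^2(\Omega)\times\L^2(\Omega))$, for all $T>0$.

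Then I would pass to the limit in the rewritten equations tested against $\varphi(t)\,\mathcal{R}_h^u\psi(\x)$ and $\varphi(t)\,\mathcal{R}_h^{\boldsymbol\sigma}{\boldsymbol\psi}(\x)$, with $\varphi\in C_c^\infty(0,T)$, $\psi\in C^\infty(\overline\Omega)$, ${\boldsymbol\psi}\in C^\infty(\overline\Omega)^n$, ${\boldsymbol\psi}\cdot\mathbf{n}=0$. The linear terms converge by the weak convergences together with $\|\mathcal{R}_h^u\psi-\psi\|_1+\|\mathcal{R}_h^{\boldsymbol\sigma}{\boldsymbol\psi}-{\boldsymbol\psi}\|_1\to0$ (from (\ref{EI01})--(\ref{EI02})) and the boundedness of the discrete time derivatives in the dual spaces. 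For the nonlinear terms, $u^r_{h',k'}{\boldsymbol\sigma}^r_{h',k'}\to u{\boldsymbol\sigma}$ in $L^1((0,T)\times\Omega)$ since both factors converge strongly in $L^2(0,T;L^2)$, while $u^r_{h',k'}\nabla u^r_{h',k'}\to u\nabla u$ weakly against bounded test functions, using the splitting $(u^r_{h',k'}-u)\nabla u^r_{h',k'}+u(\nabla u^r_{h',k'}-\nabla u)$, whose first term tends to $0$ in $L^1$ (strong times bounded) and whose second tends to $0$ weakly (weak $L^2$-convergence of $\nabla u^r_{h',k'}$ tested against $u\,{\boldsymbol\psi}\in L^2$). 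Since $\bigcup_h U_h$ and $\bigcup_h{\boldsymbol\Sigma}_h$ are dense in $H^1(\Omega)$ and $\H^1_\sigma(\Omega)$, the limit $(u,{\boldsymbol\sigma})$ satisfies the variational formulation of (\ref{modelf01}); the initial data are attained because $\widetilde u_{h,k},\widetilde{\boldsymbol\sigma}_{h,k}$ are uniformly bounded in $C([0,T];H^1(\Omega)'\times\H^1_\sigma(\Omega)')$ with $\widetilde u_{h,k}(0)=\mathcal{R}_h^u u_0\to u_0$, $\widetilde{\boldsymbol\sigma}_{h,k}(0)=\mathcal{R}_h^{\boldsymbol\sigma}{\boldsymbol\sigma}_0\to{\boldsymbol\sigma}_0$; and the corresponding energy inequality follows by summing the discrete energy law (\ref{lawenerfydisce}), discarding the nonnegative terms $\tfrac{k}{2}\|\delta_t u^n_h\|_0^2+\tfrac{k}{4}\|\delta_t{\boldsymbol\sigma}^n_h\|_0^2$, and using weak lower semicontinuity of the $L^2$-based norms.

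I expect the main obstacle to be the time-compactness step (especially in $3$D): because the nonlinearities $u{\boldsymbol\sigma}$ and $u\nabla u$ lie only in $L^{q'}(0,T;L^2(\Omega))$ after parabolic interpolation, the discrete time derivatives cannot be controlled in a space better than $L^{q'}$ of $H^1(\Omega)'$ (resp.\ $\H^1_\sigma(\Omega)'$), and one must check that this weaker control still suffices for the Aubin--Lions--Simon lemma applied to the piecewise-linear-in-time interpolants, and that replacing the smooth test functions by the projections $\mathcal{R}_h^u,\mathcal{R}_h^{\boldsymbol\sigma}$ does not spoil the estimate. Everything else reduces to routine manipulations once the estimates of Section~3.2 and the approximation/stability properties (\ref{EI01})--(\ref{SP01}) are available.
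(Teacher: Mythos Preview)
Your proposal is correct and follows exactly the route the paper takes: the paper gives no detailed argument for Theorem~\ref{NN1} but simply refers to \cite[Theorem~4.11]{FMD}, and what you have written is precisely the standard compactness/Aubin--Lions--Simon argument carried out there, adapted to the fully discrete setting via the estimates of Theorem~\ref{welem} and the interpolation properties (\ref{EI01})--(\ref{SP01}). The only minor point is that when bounding $(\partial_t\widetilde u_{h,k},\mathcal{R}_h^u\phi)$ for generic $\phi\in H^1(\Omega)$ you do not need (\ref{SP01}) (which asks for $H^2$-data) but merely the $H^1$-stability $\Vert\mathcal{R}_h^u\phi\Vert_1\le\Vert\phi\Vert_1$, which is immediate since $\mathcal{R}_h^u$ is the $H^1$-orthogonal projection defined in (\ref{interp2}).
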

Note that, since the positivity of $u^n_h$ cannot be assured, then the positivity of the limit function $u$ cannot be proven in $3D$ domains. For $1D$ and $2D$ domains, the positivity of $u$ can be recovered a posteriori, using the existence and uniqueness  of (positive) weak solution $(u,{\boldsymbol\sigma})$ of (\ref{modelf01}), see \cite{FMD}.

On the other hand, by introducing the following functions:
\begin{itemize}
\item $\widetilde{v}_{h,k}$ are continuous functions on $[0,+\infty)$, linear on each interval $(t_{n-1},t_n)$ and equal to $v^n_h,$ at $t=t_n$, $n\geq 0$;
\item ${v}_{h,k}$ are the piecewise constant functions taking values $v^{n}_h$ on $(t_{n-1},t_n]$, $n\geq 1$,
\end{itemize}
proceeding as in  Lemma 4.12 of \cite{FMD} and taking into account the estimate (\ref{weak02UVlinvL2}), the following result can be proved:
\begin{lem} {\bf (Convergence of $v$)}\label{CN1}
There exist a subsequence $(k',h')$ of $(k,h)$, with $k',h'\downarrow 0$, and a weak solution $v$ of (\ref{modelf01eqv}) in $(0,+\infty)$, such that $\widetilde{v}_{h',k'}-m_0^2$ and $v_{h',k'}-m_0^2$ converge to $v-m_0^2$ weakly-* in $L^\infty(0,+\infty;L^2(\Omega))$, weakly in $L^2(0,+\infty;H^1(\Omega))$ and strongly in $L^2(0,T;L^2(\Omega))$, for any $T>0$.
\end{lem}

\begin{obs}
From the equivalence of problems (\ref{modelf00}) and (\ref{modelf01})-(\ref{modelf01eqv}) stablished in \cite{FMD}, and taking into account Theorem \ref{NN1} and Lemma  \ref{CN1}, we deduce that the limit pair $(u,v)$ is a weak-strong solution of problem (\ref{modelf00}).
\end{obs}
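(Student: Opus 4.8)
The plan is to assemble three ingredients that are already available: the convergence of the $(u,{\boldsymbol\sigma})$-part of the scheme towards a weak solution of the reformulated system (Theorem \ref{NN1}), the convergence of the recovered discrete chemical variable $v^n_h$ towards a weak solution of (\ref{modelf01eqv}) (Corollary \ref{CN1}), and the equivalence, established in \cite{FMD}, between the reformulated system (\ref{modelf01})--(\ref{modelf01eqv}) and the original model (\ref{modelf00}). Once these are combined, proving that the limit pair $(u,v)$ is a weak-strong solution of (\ref{modelf00}) reduces essentially to the identification ${\boldsymbol\sigma}=\nabla v$ together with some routine passages to the limit.

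First I would record that the initial data produced by the scheme are compatible with the reformulation: since ${\boldsymbol\sigma}^0_h=\mathcal{R}_h^{\boldsymbol\sigma}(\nabla v_0)$ and $v^0_h=\mathcal{R}_h^v v_0$, the interpolation estimates (\ref{EI02})--(\ref{EI03}) give, in the limit, ${\boldsymbol\sigma}(0)=\nabla v_0$ with $v(0)=v_0$, and in particular $\mbox{rot }{\boldsymbol\sigma}(0)=\mbox{rot}(\nabla v_0)=0$. Then I would run the equivalence argument of \cite{FMD}: setting $\boldsymbol{w}:={\boldsymbol\sigma}-\nabla v$, one applies $\nabla$ to the weak $v$-equation (\ref{modelf01eqv})$_1$ and subtracts it from the ${\boldsymbol\sigma}$-equation (\ref{modelf01})$_2$; using $\mbox{rot}(\nabla v)=0$ and $\mbox{rot }\boldsymbol{w}(0)=0$, the function $\boldsymbol{w}$ solves a homogeneous linear problem of the form $\partial_t\boldsymbol{w}+B\boldsymbol{w}=0$ (in the appropriate weak sense) with $\boldsymbol{w}(0)=0$, hence $\boldsymbol{w}\equiv 0$; that is, ${\boldsymbol\sigma}=\nabla v$ a.e.\ in $(0,+\infty)\times\Omega$.

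With ${\boldsymbol\sigma}=\nabla v$ in hand the rest is bookkeeping. Substituting ${\boldsymbol\sigma}=\nabla v$ into (\ref{modelf01})$_1$ gives exactly the variational form of the $u$-equation (\ref{modelf00})$_1$, while (\ref{modelf01eqv})$_1$ is the $v$-equation (\ref{modelf00})$_2$; nonnegativity of $v$ follows from the maximum principle applied to (\ref{modelf00})$_2$ (source $u^2\geq 0$, $v_0\geq 0$), whereas nonnegativity of $u$ is not claimed. For the regularity (\ref{wsa}): $u\in L^\infty(0,+\infty;L^2(\Omega))\cap L^2(0,T;H^1(\Omega))$ comes from the uniform weak estimates (\ref{weak01})--(\ref{weak02}) and weak lower semicontinuity; $v\in L^\infty(0,+\infty;L^2(\Omega))\cap L^2(0,T;H^1(\Omega))$ comes from (\ref{weak02UVlinvL2})--(\ref{weak02UVlinvL2b}), and the $H^2$-regularity of $\widetilde A$ applied to the limit of (\ref{edovf}) (with $u^2\in L^2(0,T;L^2(\Omega))$, since $u\in L^2(0,T;H^1(\Omega))\hookrightarrow L^2(0,T;L^4(\Omega))$ is bounded) upgrades this to $v\in L^2(0,T;H^2(\Omega))$; the time-derivative bounds follow by comparison in the limiting equations, with the conjugate exponent $q'$ as in (\ref{wsa}). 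Finally, the energy inequality (\ref{wsd}) is obtained by summing the discrete energy law (\ref{lawenerfydisce}) from $n_0+1$ to $n_1$: the two numerical dissipation terms $\frac{k}{2}\|\delta_t u^n_h\|_0^2$ and $\frac{k}{4}\|\delta_t{\boldsymbol\sigma}^n_h\|_0^2$ are $O(k)$ by (\ref{weak01}) and vanish in the limit; using $\mbox{rot }{\boldsymbol\sigma}=0$ and $\nabla\cdot{\boldsymbol\sigma}=\Delta v$ one identifies $\frac12\|{\boldsymbol\sigma}\|_1^2$ with $\frac12\|\nabla v\|_0^2+\frac12\|\Delta v\|_0^2$ and $\mathcal{E}(u,{\boldsymbol\sigma})$ with $\mathcal{E}(u,v)$; passing to the limit (weak lower semicontinuity of the $L^2$-norms on the left, the convergences of Theorem \ref{NN1} and Corollary \ref{CN1} on the right, for $kn_0\to t_0$ and $kn_1\to t_1$) yields (\ref{wsd}) for a.e.\ $t_1\geq t_0\geq 0$.

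The main obstacle, and the only point that is not pure bookkeeping, is the rigorous justification of ${\boldsymbol\sigma}=\nabla v$ for the limit functions: one must make sure the limit $v$ has enough regularity ($v\in L^2(0,T;H^2(\Omega))$, $\partial_t v\in L^2(0,T;L^2(\Omega))$) that $\nabla v$ is an admissible competitor against the ${\boldsymbol\sigma}$-equation and that the homogeneous problem for $\boldsymbol{w}$ has only the trivial solution, and one must pass to the limit in the nonlinearities $u^n_h{\boldsymbol\sigma}^n_h$ and $(u^n_h)^2$, which uses the strong $L^2(0,T;L^2(\Omega))$ convergence of $u^r_{h,k}$ from Theorem \ref{NN1} together with the uniform bounds. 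Both of these are precisely the steps carried out in \cite{FMD}, so the argument amounts to checking that their hypotheses are satisfied in the present setting.
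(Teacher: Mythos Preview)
Your approach is exactly what the paper intends: it gives no proof beyond invoking the equivalence from \cite{FMD} together with Theorem~\ref{NN1} and Corollary~\ref{CN1}, and your sketch fills in precisely those steps. One small slip: the implication ``$u\in L^2(0,T;H^1(\Omega))\hookrightarrow L^2(0,T;L^4(\Omega))$ hence $u^2\in L^2(0,T;L^2(\Omega))$'' is false in 3D (it only gives $u^2\in L^1(0,T;L^2(\Omega))$); the correct and simpler route to $v\in L^2(0,T;H^2(\Omega))$ is the one you already set up, namely ${\boldsymbol\sigma}=\nabla v$ with ${\boldsymbol\sigma}\in L^2(0,T;\H^1(\Omega))$ from Theorem~\ref{NN1}. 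Also, the numerical dissipation terms need not ``vanish'': since they appear with a nonnegative sign on the left of the summed discrete energy law, you may simply drop them and keep the inequality, which is all that (\ref{wsd}) requires.
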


\subsection{Uniform strong estimates}
In this subsection, some a priori strong estimates  of the scheme \textbf{US} are obtained by assuming a regularity criterion (see (\ref{strong01}) below) which can be proved, at least, for 1D and 2D domains (see Theorem 4.22 of \cite{FMD}).
\begin{lem}\label{Lm:StrongIneq-n} {\bf(Strong inequality for $(u^n_h,{\boldsymbol\sigma}^n_h)$) }
	It holds 
	\begin{equation}\label{StrongIneq-n}
	\delta_t
	\Vert  (\hat u^n_h, {\boldsymbol\sigma}^n_h) \Vert_{1}^2 
	+ \Vert (\hat u^n_h, {\boldsymbol\sigma}^n_h)\Vert_{W^{1,6}}^2
	+  \Vert  (\delta_t \hat  u^n_h, \delta_t {\boldsymbol\sigma}^n_h) \Vert_{0}^2
	\le
	C_1 \Big(\Vert  (\hat u^n_h, {\boldsymbol\sigma}^n_h)  \Vert_{1}^2\Big)^d +
	C_2 \Vert  (\hat u^n_h, {\boldsymbol\sigma}^n_h) \Vert_{1}^2 
	\end{equation}
	where $d=2$ for $2D$ domains and $d=3$ for $3D$ domains.
\end{lem}
\begin{proof} The proof follows as in Lemma 4.14 of \cite{FMD}, but in this case it is necessary to use the estimates (\ref{pd0a})-(\ref{pd0b}).
\end{proof}

\begin{cor} \label{stlem}{\bf(Strong estimates for $(u^n_h,{\boldsymbol \sigma}^n_h)$)} 
Let $(u_0, v_0)\in H^1(\Omega)\times H^2(\Omega)$ and $(u^n_h,{\boldsymbol \sigma}^n_h)$ be a solution of the scheme \textbf{US}. Assuming 
the following regularity criterion:
\begin{equation}\label{strong01}
\Vert (u^n_h, {\boldsymbol
	\sigma}^n_h)\Vert_{1}^{2}\leq K_0, \ \ \ \forall n\geq 0,
\end{equation}
then the following  estimate holds
\begin{equation}\label{strong02}
%\Vert (u^n_h, {\boldsymbol 	\sigma}^n_h)\Vert_{1}^{2} +
	 k \underset{m=1}{\overset{n}{\sum}}(\Vert (\delta_t {u}^m_h, \delta_t {\boldsymbol \sigma}^m_h)\Vert_{0}^2 + \Vert (\hat{u}^m_h,{\boldsymbol \sigma}^m_h)\Vert_{W^{1,6}}^2)\leq K_1,  \ \ \ \forall n\geq 1,
\end{equation}
\end{cor}
\begin{proof}
The proof follows by using (\ref{weak01}) and (\ref{strong01}) in (\ref{StrongIneq-n}).
\end{proof}

\begin{cor} \label{mslem} {\bf(Regular estimates for $(u^n_h,{\boldsymbol \sigma}^n_h)$)}
Assume that $(u_0,{\boldsymbol\sigma}_0)\in H^2(\Omega)\times \H^2(\Omega)$. Under the hypothesis of Corollary \ref{stlem}, the fo\-llo\-wing estimates hold
\begin{equation}\label{stdta}
\Vert (\delta_t u^n_h, \delta_t {\boldsymbol \sigma}^n_h)\Vert^2_{0} + k \underset{m= 1}{\overset{n}{\sum}}\Vert (\delta_t u^m_h,
\delta_t {\boldsymbol \sigma}^m_h) \Vert^{2}_{1} \leq K_2, \ \ \forall n\geq 1,
\end{equation}
\begin{equation}\label{stdta2}
\Vert (u^n_h, {\boldsymbol \sigma}^n_h)\Vert^2_{W^{1,6}} \leq K_3, \ \ \forall n\geq 0,
\end{equation}
\end{cor}
\begin{proof}
The proof follows as in Corollary 4.18 of \cite{FMD}.
\end{proof}
\begin{obs}
In particular, from (\ref{stdta2}) one has $
\Vert (u^n_h,{\boldsymbol \sigma}^n_h)\Vert_{L^\infty} \leq K_4$ for all $n\geq 0$.
% with $K_7>0$ a constant independent of  $(k,h)$ and $n$.
\end{obs}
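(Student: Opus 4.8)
The claim is an immediate consequence of the uniform bound (\ref{stdta2}) combined with a Sobolev embedding, so the plan is short. First I would recall that, since $\Omega\subset\mathbb{R}^n$ with $n\in\{2,3\}$, the integrability exponent $6$ is strictly larger than the space dimension, and hence the Sobolev embedding
\begin{equation*}
W^{1,6}(\Omega)\hookrightarrow L^\infty(\Omega)
\end{equation*}
is continuous; I would denote by $C_S=C_S(\Omega)>0$ the associated embedding constant, which depends only on the domain $\Omega$ (and in particular is independent of the discrete parameters $(k,h)$ and of the time index $n$). Applying this estimate componentwise to $u^n_h\in U_h$ and to each Cartesian component of ${\boldsymbol\sigma}^n_h\in{\boldsymbol\Sigma}_h$ yields
\begin{equation*}
\Vert (u^n_h,{\boldsymbol\sigma}^n_h)\Vert_{L^\infty}\leq C_S\,\Vert (u^n_h,{\boldsymbol\sigma}^n_h)\Vert_{W^{1,6}},\qquad \forall n\geq 0.
\end{equation*}

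Then I would simply insert the uniform $W^{1,6}$-estimate (\ref{stdta2}) established in Theorem \ref{mslem}, namely $\Vert (u^n_h,{\boldsymbol\sigma}^n_h)\Vert_{W^{1,6}}^2\leq K_6$, to deduce
\begin{equation*}
\Vert (u^n_h,{\boldsymbol\sigma}^n_h)\Vert_{L^\infty}\leq C_S\sqrt{K_6}=:K_7,\qquad \forall n\geq 0,
\end{equation*}
which is exactly the asserted bound. Since both $C_S$ and $K_6$ depend only on the data $(\Omega,u_0,{\boldsymbol\sigma}_0)$ and not on $(k,h)$ or $n$, the resulting constant $K_7$ inherits the same independence. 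There is no genuine obstacle here: the only point worth flagging is that the embedding holds in the borderline regime into $L^\infty$ precisely because the exponent $6$ exceeds $n\leq 3$ (rather than embedding merely into some finite $L^q$), and that $C_S$ is intrinsic to $\Omega$, so that the uniformity of $K_6$ in $(k,h,n)$ transfers verbatim to $K_7$.
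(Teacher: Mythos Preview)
Your proposal is correct and matches the paper's reasoning: the remark is stated without proof in the paper, being an immediate consequence of (\ref{stdta2}) via the continuous Sobolev embedding $W^{1,6}(\Omega)\hookrightarrow L^\infty(\Omega)$ (valid since $6>n\leq 3$), exactly as you argue.
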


\begin{lem}\label{lemVs}{\bf (Strong estimates for $v^n_h$)}
Let $v^n_h$ be the solution of (\ref{edovf}). Under hypotheses of Corollary \ref{stlem}, the following estimate holds
\begin{equation}\label{strong01V}
\Vert v^n_h\Vert_{1}^{2} + k \underset{m=1}{\overset{n}{\sum}}(\Vert \delta_t \hat{v}^m_h\Vert_{0}^2  + \Vert {A}_h \hat{v}^m_h\Vert_{0}^2)\leq C_1, \ \ \ \forall n\geq 1.
\end{equation}
\end{lem}
\begin{proof}
Taking $\bar{v}={A}_h \hat{v}^n_h$ and $\delta_t \hat{v}^n_h$ in (\ref{edovf-nnn}), one has
\begin{equation}\label{vn1V}
\delta_t \left( \Vert  \hat{v}^n_h \Vert_1^2 \right) + \frac{1}{2}\Vert {A}_h \hat{v}^n_h \Vert_0^2 +\frac{1}{2}\Vert \delta_t \hat{v}^n_h \Vert_0^2
   \leq  C \Vert \hat{u}^n_h+2m_0\Vert_{L^4}^2 \Vert \hat{u}^n_h\Vert_{L^4}^2.
\end{equation}
Then, multiplying (\ref{vn1V}) by $k$, adding for $m=0,\cdot \cdot \cdot,n$,  and using (\ref{weak01}) and (\ref{strong01}), (\ref{strong01V})  is deduced.
\end{proof}

\begin{tma} {\bf (Regular estimates for $v^n_h$)}
Assume  $v_0\in H^2(\Omega)$. Under the hypotheses of Corollary \ref{mslem}, the fo\-llo\-wing estimates hold
\begin{equation}\label{stdtaV}
\Vert \delta_t v^n_h\Vert^2_{0}+ k \underset{m= 1}{\overset{n}{\sum}}\Vert \delta_t \hat{v}^m_h \Vert^{2}_{1} \leq C_2, \ \ \forall n\geq 1,
\end{equation}
\begin{equation}\label{stdtaV2}
\Vert v^n_h\Vert^2_{W^{1,6}} \leq C_3, \ \ \forall n\geq 0.
\end{equation}
\end{tma}
\begin{proof}
We denote $\widetilde{v}^n_h:=\delta_t \hat{v}^n_h$. Then, making the time discrete derivative of (\ref{edovf-nnn}) (using that $\delta_t({u}^n_h)^2=({u}^n_h + {u}^{n-1}_h)\delta_t {u}^n_h$),
testing by $\widetilde{v}^n_h$ and using (\ref{strong01}), one has
\begin{equation}\label{vn3V}
\frac{1}{2}\delta_t \left( \Vert  \widetilde{v}^n_h \Vert_0^2 \right) + \frac{1}{2}\Vert  \widetilde{v}^n_h \Vert_1^2    \leq C \Vert {u}^n_h + {u}^{n-1}_h \Vert_{L^3}^2 \Vert \delta_t {u}^n_h\Vert_{0}^2\leq C\Vert \delta_t {u}^n_h\Vert_{0}^2.
\end{equation}
Then, multiplying (\ref{vn3V}) by $k$, adding for $m=2,\cdot \cdot \cdot, n$ and using (\ref{strong02}), one arrives at
\begin{equation*}
\Vert  \widetilde{v}^n_h \Vert_0^2 + k \underset{m= 1}{\overset{n}{\sum}}\Vert \widetilde{v}^m_h \Vert^{2}_{1}   \leq C + C \Vert  \widetilde{v}^1_h \Vert_0^2.
\end{equation*}
Then, in order to deduce \eqref{stdtaV}, it suffices to bound $\Vert  \widetilde{v}^1_h \Vert_0^2$. Indeed, from (\ref{edovf-nnn}), one has
\begin{equation}\label{edt00}
(\delta_t \hat{v}^1_h,\bar{v}_h)  +({A}_h (\hat{v}^1_h - \hat{v}^0_h),\bar{v}_h) +({A}_h \hat{v}^0_h,\bar{v}_h)  =((\hat{u}^1_h + 2m_0) \hat{u}^1_h,\bar{v}_h), \ \ \forall
\bar{v}_h\in V_h.
\end{equation}
Then, taking $\bar{v}_h= \delta_t \hat{v}^1_h$ in (\ref{edt00}) and using (\ref{strong01}), one can obtain
\begin{equation}\label{edt01}
\Vert \delta_t \hat{v}^1_h \Vert^2_0 \leq  C\Vert {A}_h \hat{v}^0_h\Vert_0^2 + C\Vert \hat{u}^1_h\Vert_{L^4}^2\Vert \hat{u}^1_h+2m_0\Vert_{L^4}^2.
\end{equation}
%Moreover, considering the linear and continuous operator $\widetilde{A}_h^{v}: H^1(\Omega) \rightarrow V_h$ defined as
%$$
%(\widetilde{A}_h^{v} v, \bar{v}_h)=(\nabla v,\nabla \bar{v}_h)  + ({v},\bar{v}_h), \ \ \forall \bar{v}_h \in {V}_h,
%$$
%(which is an extension of ${A}_h^v$ to $H^1(\Omega)$), using the inverse inequality $\Vert v_h\Vert_1\leq \frac{C}{h}\Vert v_h\Vert_0$ for all $v_h \in V_h$, and 
From the inverse inequality \eqref{16inv1} and the interpolation error (\ref{EI03}), we have
\begin{eqnarray}\label{v0a}
\Vert {A}_h \hat{v}^0_h\Vert_0
%&\!\!\! 
%= \Vert \widetilde{A}_h^v v^0_h\Vert_0
 \leq \Vert {A}_h (\mathcal{R}^v_h \hat{v}_0 - \hat{v}_0)\Vert_0 +  \Vert {A}_h \hat{v}_0\Vert_0
 %\nonumber\\ && \!\!\!  
 \leq C\frac{1}{h}\Vert \mathcal{R}^v_h \hat{v}_0 - \hat{v}_0 \Vert_1 
%+C \Vert \mathcal{R}^v_h \hat{v}_0 - \hat{v}_0 \Vert_0 
+ \Vert \hat{v}_0\Vert_2\leq C \Vert \hat{v}_0\Vert_2.
\end{eqnarray}
Thus, using (\ref{strong01}) and (\ref{v0a}) in (\ref{edt01}), the estimate $\Vert  \widetilde{v}^1_h \Vert_0^2\leq C$ is obtained. Finally, (\ref{stdtaV2}) can be deduced from  (\ref{pd0a})$_2$, (\ref{strong01}) and (\ref{stdtaV}).
\end{proof}

\subsection{Error estimates}% at finite time}
We will obtain error estimates 
%at finite time 
for 
%$(u_n,{\boldsymbol\sigma}_n)$ a solution of  
the scheme \textbf{US} with respect to a su\-ffi\-cien\-tly regular solution $(u,{\boldsymbol\sigma})$ of (\ref{modelf01}) and  $v$ of (\ref{modelf01eqv}). 
 For any final time $T>0$, let us consider a fixed partition of $[0,T]$ given by $(t_n=nk)_{n=0}^N$, where $k=T/N>0$ is the time step. We will denote by   $C,C_i,K_i$ to different positive constants possibly  depending on the continuous solution $(u, v,{\boldsymbol \sigma}=\nabla v)$, but independent of the discrete parameters $(k,h)$ and the length of the time interval $T$, because the dependence of $T$ will be given explicitly. In order to obtain optimal error estimates, we will  assume the following continuous FE  spaces: 
$$U_h, {\boldsymbol\Sigma}_h\sim\mathbb{P}_m[\x]
\quad \hbox{and}\quad V_h\sim\mathbb{P}_{m+1},\quad \hbox{with $m\geq 1$.}
$$
 This is a natural assumption because, in the continuous model, the energy norm for $v$ has one order higher than for $(u,{\boldsymbol\sigma})$. In fact, we are going to obtain optimal error estimates, in weak norms for $(u,\boldsymbol \sigma)$ and in strong norms for $v$.
 
 \
 
 We  introduce the following notations for the errors at $t=t_{n}$:  
$$
e_u^n=u(t_n)-u^n_h,\quad e_{\boldsymbol\sigma}^n={\boldsymbol\sigma}(t_n)-{\boldsymbol\sigma}^n_h
\quad \hbox{and} \quad e_v^n=v(t_n)-v^n_h
  $$ 
and for the discrete norms:
$$
\Vert  (e^n)\Vert^2_{l^\infty X} := \max_{n=1,\cdots,N} \| e^n\|^2_X,
\quad 
\Vert  (e^n)\Vert^2_{l^2 X} := k \sum_{n=1}^N \| e^n\|^2_X.
$$
\subsubsection{Error estimates for $(e_u^n,e_{{\boldsymbol \sigma}}^n)$ in weak norms}

%  and for the discrete in time derivative of these errors: $\displaystyle \delta_t e_u^n=\frac{e_u^n-e_u^{n-1}}{k}$ and   $\displaystyle\delta_t e_{\boldsymbol\sigma}^n=\frac{e_{\boldsymbol\sigma}^n - e_{\boldsymbol\sigma}^{n-1}}{k}$. 
Subtracting (\ref{modelf01}) at $t=t_n$ and the scheme \textbf{US}, then $(e_u^n,e_{{\boldsymbol \sigma}}^n)$ satisfies
\begin{equation}\label{erru}
\left(\delta_t e_u^n,\bar{u}_h\right) + (\nabla e_u^n, \nabla \bar{u}_h) +(e_u^n{\boldsymbol \sigma}(t_n)+u^n_h e_{\boldsymbol \sigma}^n,\nabla \bar{u}_h)=(\xi_1^n,\bar{u}_h), \ \ \forall \bar{u}_h\in U_h,
\end{equation}
\begin{equation}\label{errs}
\left(\delta_t e_{\boldsymbol\sigma}^n,\bar{\boldsymbol \sigma}_h\right)+ \langle 
B_h e_{\boldsymbol\sigma}^n, \bar{\boldsymbol \sigma}_h\rangle = 2(e_u^n\nabla u(t_n)+u^n_h \nabla e_u^n ,\bar{\boldsymbol \sigma}_h) + (\xi_2^n,\bar{\boldsymbol \sigma}_h),\ \ \forall \bar{\boldsymbol
\sigma}_h\in {\boldsymbol \Sigma}_h,
\end{equation}
where $\xi_1^n,\xi_2^n$ are the consistency errors associated to the scheme \textbf{US}, that is, 
$$
\xi_1^n=\delta_t (u(t_n)) - u_t(t_n)
\quad \hbox{and}\quad
\xi_2^n=\delta_t ({\boldsymbol\sigma}(t_n)) - {\boldsymbol\sigma}_t(t_n).
$$
 Now, considering the interpolation operators $\mathcal{R}_h^u$ and $\mathcal{R}_h^{\boldsymbol \sigma}$ defined in (\ref{interp2})-(\ref{interp1}), the errors $e_u^n$ and $e_{\boldsymbol\sigma}^n$ are decomposed as follows
\begin{equation}\label{u1a}
e_u^n=(\mathcal{I}-\mathcal{R}_h^u)u(t_n) + \mathcal{R}_h^u u(t_n) - u^n_h=e_{u,i}^n+e_{u,h}^n,
\end{equation}
\begin{equation}\label{s1a}
e_{\boldsymbol\sigma}^n=(\mathcal{I}-\mathcal{R}_h^{\boldsymbol\sigma}){\boldsymbol\sigma}(t_n) + \mathcal{R}_h^{\boldsymbol\sigma} {\boldsymbol\sigma}(t_n) - {\boldsymbol\sigma}^n_h=e_{{\boldsymbol\sigma},i}^n+e_{{\boldsymbol\sigma},h}^n,
\end{equation}
where $e_{u,i}^n$ is the interpolation error and $e_{u,h}^n$ is the discrete error of $u$ (idem for $\boldsymbol\sigma$).
Then, taking into account (\ref{interp2})-(\ref{interp1}), from (\ref{erru})-(\ref{s1a}),  one has
\begin{eqnarray}\label{int003}
&\left(\delta_t e_{u,h}^n,\bar{u}_h\right)&\!\!\!\!\! + (\nabla e_{u,h}^n, \nabla \bar{u}_h) +(e_{u,h}^n{\boldsymbol \sigma}(t_n)+u^n_h e_{{\boldsymbol \sigma},h}^n,\nabla \bar{u}_h)=(\xi_1^n,\bar{u}_h)\nonumber\\
&&\!\!\!\!\! -\left(\delta_t e_{u,i}^n,\bar{u}_h\right) -(e_{u,i}^n{\boldsymbol \sigma}(t_n)+u^n_h e_{{\boldsymbol \sigma},i}^n,\nabla \bar{u}_h), \ \ \forall \bar{u}_h\in U_h,
\end{eqnarray}
\vspace{-1cm}
\begin{eqnarray}\label{int004}
&\left(\delta_t e_{{\boldsymbol\sigma},h}^n,\bar{\boldsymbol \sigma}_h\right) &\!\!\!\!\!
+ ( B_h
e_{{\boldsymbol\sigma},h}^n, \bar{\boldsymbol \sigma}_h)
= (\xi_2^n,\bar{\boldsymbol \sigma}_h)+2(e_{u,h}^n\nabla u(t_n)+u^n_h \nabla e_{u,h}^n ,\bar{\boldsymbol \sigma}_h) \nonumber\\
&&\!\!\!\!\!+2(e_{u,i}^n\nabla u(t_n)+u^n_h \nabla e_{u,i}^n ,\bar{\boldsymbol \sigma}_h) -\left(\delta_t e_{{\boldsymbol\sigma},i}^n,\bar{\boldsymbol \sigma}_h\right),\ \ \forall \bar{\boldsymbol
\sigma}_h\in {\boldsymbol\Sigma}_h.
\end{eqnarray}
Notice that $\int_\Omega e^n_{u,h}=0$ (since $u^0_h= \mathcal{R}_h^u u_0$ and from (\ref{interp2}) $\int_\Omega \mathcal{R}_h^u u(t_n)=\int_\Omega u(t_n) = m_0$), hence the following norms are equivalent: $\Vert \nabla e_{u,h}^n\Vert _0\simeq \Vert e_{u,h}^n\Vert_1$.

\begin{tma}\label{erteo}
Assume that there exists  $(u,{\boldsymbol\sigma})$ an exact solution of (\ref{modelf01}) such that:
%with the following regularity:
\begin{equation}\label{regU}
\left\{
\begin{array}
[c]{ccc}
(u,{\boldsymbol\sigma})\in L^{\infty}(0,+\infty;H^{m+1}(\Omega)\!\times\! \H^{m+1}(\Omega)), \ (u_{t},{\boldsymbol\sigma}_{t})\in
L^{2}(0,+\infty;H^{m+1}(\Omega)\!\times\! \H^{m+1}(\Omega)), \\
(u_{tt},{\boldsymbol\sigma}_{tt})\in L^2(0,+\infty;H^1(\Omega)'\times \H^1_\sigma(\Omega)').
\end{array}
\right.
\end{equation}
Let $(u^n_h,{\boldsymbol\sigma}^n_h)$ be a solution of the scheme \textbf{US}.  Then, if 
\begin{equation}\label{re01}
k (\Vert (u, {\boldsymbol\sigma})\Vert_{L^\infty(H^1)}^4+\Vert(u,{\boldsymbol\sigma})\Vert_{L^\infty(H^2)}^2) \quad \hbox{is small enough}, 
\end{equation}
%where the constant $C_1$ depends on $\Vert (u,{\boldsymbol\sigma})\Vert_{L^\infty H^2}$; 
 the following a priori error estimate holds
\begin{eqnarray}\label{priorierr}
\Vert  (e_{u,h}^n,e_{{\boldsymbol \sigma},h}^n)\Vert_{l^\infty L^2\cap l^2 H^1}^2
\leq 
K_1 T \exp(K_2T)(k^2+h^{2(m+1)}).
\end{eqnarray}
% where $C(T) = K_1 T \exp(K_2T)$, with $K_1,K_2>0$ independent of $(k,h)$.
\end{tma}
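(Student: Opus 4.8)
The plan is to derive the error estimate from the error equations (\ref{int003})--(\ref{int004}) by an energy argument combined with the uniform strong estimates of the scheme \textbf{US} (Theorems \ref{stlem}--\ref{mslem}, valid under (\ref{strong01})) and a discrete Gronwall argument. First I would test (\ref{int003}) with $\bar{u}_h = e_{u,h}^n$ and (\ref{int004}) with $\bar{\boldsymbol\sigma}_h = \frac{1}{2} e_{{\boldsymbol\sigma},h}^n$, so that the bad bilinear coupling terms partially cancel: the term $(u^n_h e_{{\boldsymbol\sigma},h}^n,\nabla e_{u,h}^n)$ coming from the $u$-equation should cancel against $2(u^n_h\nabla e_{u,h}^n, e_{{\boldsymbol\sigma},h}^n)$ (times $\frac12$) coming from the $\boldsymbol\sigma$-equation, as in the continuous energy estimate and in \cite{FMD}. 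After using $\delta_t(\cdot,\cdot) = \frac12\delta_t\|\cdot\|_0^2 + \frac{k}{2}\|\delta_t(\cdot)\|_0^2$, this yields an inequality of the form
\begin{equation*}
\delta_t\Big(\tfrac12\|e_{u,h}^n\|_0^2 + \tfrac14\|e_{{\boldsymbol\sigma},h}^n\|_0^2\Big) + \|\nabla e_{u,h}^n\|_0^2 + \tfrac12\|e_{{\boldsymbol\sigma},h}^n\|_1^2 \leq (\text{RHS}),
\end{equation*}
where the right-hand side collects: the consistency errors $(\xi_1^n,e_{u,h}^n)$, $(\xi_2^n,e_{{\boldsymbol\sigma},h}^n)$; the interpolation-error terms $\delta_t e_{u,i}^n$, $\delta_t e_{{\boldsymbol\sigma},i}^n$, $e_{u,i}^n$, and the products $e_{u,i}^n{\boldsymbol\sigma}(t_n)$, $u^n_h e_{{\boldsymbol\sigma},i}^n$, $e_{u,i}^n\nabla u(t_n)$, $u^n_h\nabla e_{u,i}^n$; and the remaining discrete-error nonlinearities $(e_{u,h}^n{\boldsymbol\sigma}(t_n),\nabla e_{u,h}^n)$ and $(e_{u,h}^n\nabla u(t_n), e_{{\boldsymbol\sigma},h}^n)$ that did not cancel.

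Next I would bound each group. The consistency errors are controlled via Taylor expansion with integral remainder, $\|\xi_1^n\|_{H^1(\Omega)'} + \|\xi_2^n\|_{\H^1_\sigma(\Omega)'} \leq C k^{1/2}\big(\int_{t_{n-1}}^{t_n}\|u_{tt}\|_{H^1(\Omega)'}^2 + \|{\boldsymbol\sigma}_{tt}\|_{\H^1_\sigma(\Omega)'}^2\big)^{1/2}$, so after multiplying by $k$ and summing they contribute $Ck^2$. The interpolation errors are handled by (\ref{EI01})--(\ref{EI02}) and, for the discrete-time derivative $\delta_t e_{u,i}^n$, by $\|\delta_t e_{u,i}^n\|_0 \leq k^{-1/2}(\int_{t_{n-1}}^{t_n}\|(\mathcal I - \mathcal R_h^u)u_t\|_0^2)^{1/2} \leq C h^{m+1} k^{-1/2}(\int_{t_{n-1}}^{t_n}\|u_t\|_{m+1}^2)^{1/2}$, which after squaring, multiplying by $k$ and summing gives $Ch^{2(m+1)}$; the choice of $V_h \sim \mathbb{P}_{m+1}$ and $U_h,{\boldsymbol\Sigma}_h \sim \mathbb{P}_m$ is exactly what makes all interpolation contributions of order $h^{2(m+1)}$. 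The terms involving $u^n_h$ are estimated using the uniform $L^\infty$ bound $\|u^n_h\|_{L^\infty} \leq K_7$ and $\|u^n_h\|_1 \leq K_0$ (from (\ref{strong01}) and Theorem \ref{mslem}); the products $e_{u,i}^n{\boldsymbol\sigma}(t_n)$, $e_{u,i}^n\nabla u(t_n)$ use the regularity (\ref{regU}) of the exact solution, e.g. $\|{\boldsymbol\sigma}(t_n)\|_{L^\infty}, \|\nabla u(t_n)\|_{L^\infty} \leq C$ by Sobolev embedding from $H^{m+1} \hookrightarrow W^{1,\infty}$ (for $m\geq 1$, $n\leq 3$). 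The non-cancelled discrete-error nonlinearities are absorbed: $(e_{u,h}^n{\boldsymbol\sigma}(t_n),\nabla e_{u,h}^n) \leq \|{\boldsymbol\sigma}(t_n)\|_{L^\infty}\|e_{u,h}^n\|_0\|\nabla e_{u,h}^n\|_0 \leq \frac14\|\nabla e_{u,h}^n\|_0^2 + C\|e_{u,h}^n\|_0^2$, and similarly $(e_{u,h}^n\nabla u(t_n), e_{{\boldsymbol\sigma},h}^n) \leq C\|e_{u,h}^n\|_0\|e_{{\boldsymbol\sigma},h}^n\|_0$, both absorbed by the dissipative terms on the left plus a $C(\|e_{u,h}^n\|_0^2 + \|e_{{\boldsymbol\sigma},h}^n\|_0^2)$ remainder.

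Putting this together gives, after multiplying by $k$ and summing from $1$ to $n$,
\begin{equation*}
\tfrac12\|e_{u,h}^n\|_0^2 + \tfrac14\|e_{{\boldsymbol\sigma},h}^n\|_0^2 + k\sum_{m=1}^n\big(\|\nabla e_{u,h}^m\|_0^2 + \tfrac12\|e_{{\boldsymbol\sigma},h}^m\|_1^2\big) \leq C(k^2 + h^{2(m+1)}) + Ck\sum_{m=1}^n\big(\|e_{u,h}^m\|_0^2 + \|e_{{\boldsymbol\sigma},h}^m\|_0^2\big),
\end{equation*}
using $\|e_{u,h}^0\|_0 = \|e_{{\boldsymbol\sigma},h}^0\|_0 = 0$ by the choice of initialization. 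The discrete Gronwall lemma then yields (\ref{priorierr}) with $C(T) = K_1 T\exp(K_2 T)$. The main obstacle — and the place where the smallness hypothesis (\ref{re01}) enters — is controlling the trilinear error terms that cannot be handled by the $L^\infty$ bounds alone, namely those of the form $(e_{u,h}^n e_{{\boldsymbol\sigma},h}^n, \cdot)$ and $(u^n_h\nabla e_{u,h}^n, e_{{\boldsymbol\sigma},h}^n)$ where one factor is a discrete error in $H^1$ rather than a bounded quantity; there one must use the interpolation inequality (\ref{in3D}), the equivalence $\|\nabla e_{u,h}^n\|_0 \simeq \|e_{u,h}^n\|_1$, and absorb a term like $Ck\|(u,{\boldsymbol\sigma})\|_{L^\infty(H^2)}^2\|e_{{\boldsymbol\sigma},h}^n\|_1^2$ (or $Ck\|(u,{\boldsymbol\sigma})\|_{L^\infty(H^1)}^4(\dots)$) into the left-hand side, which is only possible when (\ref{re01}) holds so that the constant is below the dissipation coefficient. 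This is the discrete analogue of the uniqueness restriction (\ref{uniq01}) and is handled exactly as in Theorem 4.11 of \cite{FMD}.
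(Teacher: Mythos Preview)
Your overall structure---test (\ref{int003}) by $e_{u,h}^n$, test (\ref{int004}) by $\tfrac12 e_{{\boldsymbol\sigma},h}^n$, cancel the cross term $(u^n_h\nabla e_{u,h}^n,e_{{\boldsymbol\sigma},h}^n)$, bound the remainder, and close with discrete Gronwall---matches the paper's. But two points diverge, and the first is a genuine gap.

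\textbf{You use discrete strong estimates that are not assumed.} You invoke Theorems \ref{stlem}--\ref{mslem} to get $\|u^n_h\|_{L^\infty}\le K_7$ and $\|u^n_h\|_1\le K_0$. Those theorems are proved \emph{under} hypothesis (\ref{strong01}), which is not part of the statement of Theorem \ref{erteo} (and is only known to hold in 2D). The paper avoids any a priori control of $u^n_h$: in the term $I_7=-(u^n_h,\,e_{{\boldsymbol\sigma},i}^n\!\cdot\!\nabla e_{u,h}^n-\nabla e_{u,i}^n\!\cdot\!e_{{\boldsymbol\sigma},h}^n)$ it writes $u^n_h=\mathcal{R}_h^u u(t_n)-e_{u,h}^n$. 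The $\mathcal{R}_h^u u(t_n)$ piece is bounded via (\ref{SP01}) and the exact regularity (\ref{regU}); after an integration by parts (using $e_{{\boldsymbol\sigma},h}^n\cdot\mathbf n=0$ on $\partial\Omega$) one recovers $\|(e_{u,i}^n,e_{{\boldsymbol\sigma},i}^n)\|_0$ rather than $\|\nabla e_{u,i}^n\|_0$, preserving the optimal order $h^{m+1}$. The $e_{u,h}^n$ piece produces an additional Gronwall coefficient $C\|(u(t_n),{\boldsymbol\sigma}(t_n))\|_2^2\,\|e_{u,h}^n\|_0^2$. Note also that your alternative route, bounding $\|\nabla u(t_n)\|_{L^\infty}$ via $H^{m+1}\hookrightarrow W^{1,\infty}$, fails for $m=1$ in dimension $3$; the paper instead estimates $I_5$ by $\|e_{u,h}^n\|_{L^3}\|\nabla u(t_n)\|_0\|e_{{\boldsymbol\sigma},h}^n\|_{L^6}$ and uses (\ref{in3D}).

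\textbf{You misidentify where (\ref{re01}) is used.} You attribute the smallness hypothesis to terms of the form $(e_{u,h}^n e_{{\boldsymbol\sigma},h}^n,\cdot)$ and $(u^n_h\nabla e_{u,h}^n,e_{{\boldsymbol\sigma},h}^n)$, but the first does not occur in the error system (which is linear in $(e_{u,h}^n,e_{{\boldsymbol\sigma},h}^n)$ once one uses the splitting $u(t_n){\boldsymbol\sigma}(t_n)-u^n_h{\boldsymbol\sigma}^n_h=e_u^n{\boldsymbol\sigma}(t_n)+u^n_h e_{\boldsymbol\sigma}^n$), and the second is precisely the term that cancels. In the paper the smallness arises for a different reason: the bounds on $I_5$ and $I_7$ leave on the right of the $n$-th energy inequality a term
\[
C\big(\|(u(t_n),{\boldsymbol\sigma}(t_n))\|_1^4+\|(u(t_n),{\boldsymbol\sigma}(t_n))\|_2^2\big)\,\|e_{u,h}^n\|_0^2
\]
with the \emph{implicit} index $n$. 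After multiplying by $k$ and summing to $r$, this top term must be moved to the left, yielding the factor $\big[\tfrac14-kC\big]\|(e_{u,h}^r,e_{{\boldsymbol\sigma},h}^r)\|_0^2$; hypothesis (\ref{re01}) is exactly what makes this coefficient positive so that the discrete Gronwall lemma applies to the remaining sum $\sum_{n=0}^{r-1}\|e_{u,h}^n\|_0^2$.
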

Recall that $u$ and ${\boldsymbol\sigma}$ are approximated by $\mathbb{P}_{m}$-continuous FE.
\begin{proof}
Taking $\bar{u}_h=e_{u,h}^n$ in (\ref{int003}), $\bar{\boldsymbol
\sigma}_h=\displaystyle\frac{1}{2}e_{{\boldsymbol\sigma},h}^n$ in (\ref{int004}) and adding, the terms $(u^n_h \nabla e_{u,h}^n ,e_{{\boldsymbol \sigma},h}^n)$ cancel, and we obtain
\begin{eqnarray}\label{ester1}
&\delta_t&\!\!\!\!\!\left( \displaystyle \frac{1}{2} \Vert e_{u,h}^n\Vert_{0}^2 + \displaystyle \frac{1}{4} \Vert e_{{\boldsymbol \sigma},h}^n\Vert_{0}^2 \right)+\frac{1}{2}\Vert  (e_{u,h}^n,e_{{\boldsymbol \sigma},h}^n)\Vert_{1}^2= (\xi_1^n,e_{u,h}^n) +\frac{1}{2}(\xi_2^n,e_{{\boldsymbol \sigma},h}^n) -\left(\delta_t e_{u,i}^n,e_{u,h}^n\right)  \nonumber\\
&&\!\!\!\!\!  -\frac{1}{2}\left(\delta_t e_{{\boldsymbol\sigma},i}^n,e_{{\boldsymbol \sigma},h}^n\right) -(e_{u,h}^n\, , \, {\boldsymbol \sigma}(t_n)\cdot \nabla e_{u,h}^n - \nabla u(t_n)\cdot e_{{\boldsymbol \sigma},h}^n) 
-(e_{u,i}^n \, , \, {\boldsymbol \sigma}(t_n)\cdot\nabla e_{u,h}^n 
- \nabla u(t_n)\cdot e_{{\boldsymbol \sigma},h}^n)
\nonumber\\
&&\!\!\!\!\! -(u^n_h\, , \, e_{{\boldsymbol \sigma},i}^n\cdot \nabla e_{u,h}^n - \nabla e_{u,i}^n \cdot e_{{\boldsymbol \sigma},h}^n)  := \underset{m=1}{\overset{7}{\sum}} I_m.
\end{eqnarray}
Then, using the H\"older and Young inequalities, the 3D interpolation inequality (\ref{in3D}), the interpolation errors (\ref{EI01})-(\ref{EI02}), the stability property (\ref{SP01}) and the hypothesis (\ref{regU}),  the terms on the right hand side of (\ref{ester1}) can be estimated as follows 
\begin{eqnarray}\label{Ea1a}
&I_1+ I_2&\!\!\!\leq \varepsilon \Vert (e_{u,h}^n, e_{{\boldsymbol \sigma},h}^n)\Vert_{1}^2 + C_{\varepsilon}\Vert (\xi_1^n,\xi_2^n)\Vert_{(H^1)' \times(H^1_{\sigma})'}^2 \nonumber\\
&&\!\!\!\leq \varepsilon\Vert (e_{u,h}^n, e_{{\boldsymbol \sigma},h}^n)\Vert_{1}^2+C k \int_{t_{n-1}}^{t_n}\Vert (u_{tt}(t),{\boldsymbol \sigma}_{tt}(t))\Vert_{(H^1)'\times(H_{\sigma}^1)'}^2dt,
\end{eqnarray}
\begin{eqnarray}\label{Ea1b}
&I_3+ I_4&\!\!\!\leq  \Vert (e_{u,h}^n, e_{{\boldsymbol \sigma},h}^n)\Vert_{0}\Vert((\mathcal{I} - \mathcal{R}_h^u) \delta_t u(t_n), (\mathcal{I} - \mathcal{R}_h^{\boldsymbol \sigma}) \delta_t {\boldsymbol \sigma}(t_n))\Vert_{0} \nonumber\\
&&\!\!\!\leq \varepsilon\Vert (e_{u,h}^n, e_{{\boldsymbol \sigma},h}^n)\Vert_{1}^2+C h^{2(m+1)}\Vert (\delta_t u(t_n), \delta_t {\boldsymbol \sigma}(t_n))\Vert_{m+1}^2\nonumber\\
&&\!\!\!\leq \varepsilon\Vert (e_{u,h}^n, e_{{\boldsymbol \sigma},h}^n)\Vert_{1}^2+\displaystyle\frac{C h^{2(m+1)}}{k}\int_{t_{n-1}}^{t_n}\Vert(u_t, {\boldsymbol \sigma}_t ) \Vert_{m+1}^2  dt,
\end{eqnarray}
where the fact that $(\delta_t u(t_n),\delta_t {\boldsymbol \sigma}(t_n))=\displaystyle\frac{1}{k}\int_{t_{n-1}}^{t_n}(u_t, {\boldsymbol \sigma}_t )$ was used in the last inequality,
\begin{eqnarray}\label{Ea1}
I_5\leq \Vert e_{u,h}^n\Vert_{L^3} \Big(\Vert \nabla u(t_n)\Vert_{0} \Vert e_{{\boldsymbol \sigma},h}^n\Vert_{L^6} + \Vert \nabla \cdot {\boldsymbol\sigma}(t_n)\Vert_{0} \Vert e_{u,h}^n \Vert_{L^6} \Big)\nonumber\\
\leq \varepsilon\Vert (e_{u,h}^n, e_{{\boldsymbol \sigma},h}^n)\Vert_{1}^2+C_{\varepsilon}
\Vert (\nabla u, \nabla \cdot {\boldsymbol\sigma})\Vert_{L^\infty(L^2)}^4
\Vert e_{u,h}^n\Vert_{0}^2,
\end{eqnarray}
%\vspace{-1,3 cm}
\begin{eqnarray}\label{Ea2}
&I_6&\!\!\!\leq \Vert e_{u,i}^n\Vert_{0} \Big(\Vert \nabla e_{u,h}^n\Vert_{0} \Vert {\boldsymbol \sigma}(t_n)\Vert_{L^\infty} + \Vert \nabla u(t_n)\Vert_{L^3} \Vert e_{{\boldsymbol \sigma},h}^n \Vert_{L^6} \Big)\nonumber\\
&&\!\!\!\leq \varepsilon\Vert  (e_{u,h}^n,e_{{\boldsymbol \sigma},h}^n)\Vert_{1}^2
+C_{\varepsilon}
%\Vert (\nabla u(t_n),{\boldsymbol\sigma}(t_n))\Vert_{L^3\times L^\infty}^2
\Vert e_{u,i}^n\Vert_{0}^2\nonumber
%\\
%&& \!\!\!
%\leq \varepsilon\Vert  (e_{u,h}^n,e_{{\boldsymbol \sigma},h}^n)\Vert_{1}^2
%+C h^{2(m+1)} \Vert u(t_n) \Vert_{m+1}^{ 2} 
\leq \varepsilon\Vert  (e_{u,h}^n,e_{{\boldsymbol \sigma},h}^n)\Vert_{1}^2
+C \, h^{2(m+1)},
\end{eqnarray}
\vspace{-1,3 cm}

\begin{eqnarray}\label{Ea3}
&I_7&\!\!\!\leq \vert(e_{u,h}^n\, ,\, e_{{\boldsymbol \sigma},i}^n\cdot \nabla e_{u,h}^n - \nabla e_{u,i}^n \cdot e_{{\boldsymbol \sigma},h}^n)\vert + \vert(\mathcal{R}_h^u u(t_n)\,,\, e_{{\boldsymbol \sigma},i}^n\cdot \nabla e_{u,h}^n - \nabla e_{u,i}^n \cdot e_{{\boldsymbol \sigma},h}^n)\vert  \nonumber\\
&&\!\!\!\leq \varepsilon\Vert (e_{u,h}^n, e_{{\boldsymbol \sigma},h}^n)\Vert_{1}^2+C_{\varepsilon}\Vert e_{u,h}^n\Vert_{0}^2\Vert (e_{u,i}^n,e_{{\boldsymbol \sigma},i}^n)\Vert_{W^{1,3}\times L^\infty}^2+C_{\varepsilon}\Vert \mathcal{R}_h^u u(t_n)\Vert_{W^{1,3}\cap L^\infty}^2\Vert (e_{u,i}^n,e_{{\boldsymbol \sigma},i}^n)\Vert_{0}^2\nonumber\\
%&&\!\!\!\leq \varepsilon\Vert (e_{u,h}^n, e_{{\boldsymbol \sigma},h}^n)\Vert_{1}^2\!+\! C\Vert(u(t_n),{\boldsymbol\sigma}(t_n))\Vert_2^2\Vert e_{u,h}^n\Vert_{0}^2\!+\!Ch^{2(m+1)}\Vert(u(t_n),{\boldsymbol\sigma}(t_n))\Vert_{m+1}^2\Vert u(t_n)\Vert_{2}^2
%\nonumber\\
&&
\!\!\!\leq \varepsilon\Vert (e_{u,h}^n, e_{{\boldsymbol \sigma},h}^n)\Vert_{1}^2\!+\! C
\Vert(u,{\boldsymbol\sigma})\Vert_{L^\infty(H^2)}^2
\Vert e_{u,h}^n \Vert_{0}^2\!+\!Ch^{2(m+1)}.
\end{eqnarray}
%\vspace{-1,3 cm}
%\vspace{-1,3 cm}
%\begin{eqnarray}\label{Ea4}
%&I_8&\!\!\!\leq \Vert e_{u,i}^n\Vert_{0} \Vert e_{u,h}^n \Vert_{0}\leq \varepsilon\Vert e_{u,h}^n\Vert_{1}^2
%+C\, h^{2(m+1)}.
%\end{eqnarray}
%\vspace{-1,3 cm}
 Therefore, taking $\varepsilon$ small enough, from (\ref{ester1})-(\ref{Ea3}) we obtain
\begin{eqnarray}\label{ee001}
&\delta_t&\!\!\!\!\!\left( \displaystyle \frac{1}{2} \Vert e_{u,h}^n\Vert_{0}^2+ \frac{1}{4} \Vert e_{{\boldsymbol \sigma},h}^n\Vert_{0}^2\right)+\Vert (e_{u,h}^n,e_{{\boldsymbol \sigma},h}^n)\Vert_{1}^2
\le Ch^{2(m+1)} 
+
 C(u, {\boldsymbol\sigma}) \Vert e_{u,h}^n\Vert_{0}^2
\nonumber\\
&&\!\!\!\!\! + C k \int_{t_{n-1}}^{t_n}\Vert (u_{tt}(t),{\boldsymbol \sigma}_{tt}(t))\Vert_{(H^1)'\times(H_{\sigma}^1)'}^2dt
+\displaystyle\frac{C h^{2(m+1)}}{k}\int_{t_{n-1}}^{t_n}\Vert(u_t, {\boldsymbol \sigma}_t ) \Vert_{m+1}^2  dt
\end{eqnarray}
 where $C(u, {\boldsymbol\sigma})= C\Big(\Vert (u, {\boldsymbol\sigma})\Vert_{L^\infty(0,\infty;H^1)}^4+\Vert(u,{\boldsymbol\sigma})\Vert_{L^\infty(0,\infty;H^2)}^2\Big)$.
Then, multiplying (\ref{ee001}) by $k$, adding from $n=1$ to $n=r$, recalling that $e_{u,h}^0=e_{{\boldsymbol \sigma},h}^0=0$, and taking into account (\ref{regU}), 
 it holds
\begin{equation*}
\Big[\frac{1}{4}- k\, C(u, {\boldsymbol\sigma})\Big] \Vert (e_{u,h}^r,e_{{\boldsymbol \sigma},h}^r)\Vert_{0}^2+k\sum_{n=1}^{r}\Vert (e_{u,h}^n,e_{{\boldsymbol \sigma},h}^n)\Vert_{1}^2\leq C k^2 +C h^{2(m+1)} 
+C\, k \sum_{n=0}^{r-1} \Vert e_{u,h}^n\Vert_{0}^2.
\end{equation*}
Therefore, assuming the hypothesis (\ref{re01}) and using the  discrete Gronwall Lemma, 
 error estimate (\ref{priorierr}) can be deduced. 
\end{proof}
\begin{obs}\label{remH2}
Under the hypotheses of Theorem \ref{erteo}, one has in particular
$$
\Vert (u_h^n,{\boldsymbol\sigma}_h^n)\Vert_1^2 \leq C +
 K_1 T \exp(K_2T)\Big(k+\frac{h^{2(m+1)}}{k}\Big).
$$
Therefore, 
under the hypothesis 
\begin{equation}\label{uub}
\frac{h^{2(m+1)}}{k}\leq C,
\end{equation}
one has the estimate
\begin{equation}\label{uuc}
\Vert (u_h^n,{\boldsymbol\sigma}_h^n)\Vert_1^2 \leq C ,
\end{equation}
hence the hypothesis (\ref{uniq01}) providing uniqueness of the scheme  is reduced to $k$ small enough.
Finally, since  for any choice of $(k,h)$ either (\ref{uua}) (see Remark \ref{inddom01}) or (\ref{uub}) hold, one has the uniqueness of $(u^n_h,{\boldsymbol\sigma}^n_h)$ solution of (\ref{modelf02}) only imposing $k$ small enough.
\end{obs}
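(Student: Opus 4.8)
The plan is to combine the error estimate of Theorem~\ref{erteo} with the decompositions (\ref{u1a})--(\ref{s1a}). Writing $u^n_h = u(t_n) - e^n_{u,i} - e^n_{u,h}$ and ${\boldsymbol\sigma}^n_h = {\boldsymbol\sigma}(t_n) - e^n_{{\boldsymbol\sigma},i} - e^n_{{\boldsymbol\sigma},h}$, the triangle inequality gives
$$\Vert(u^n_h,{\boldsymbol\sigma}^n_h)\Vert_1 \le \Vert(u(t_n),{\boldsymbol\sigma}(t_n))\Vert_1 + \Vert(e^n_{u,i},e^n_{{\boldsymbol\sigma},i})\Vert_1 + \Vert(e^n_{u,h},e^n_{{\boldsymbol\sigma},h})\Vert_1.$$
The first term is $\le \Vert(u,{\boldsymbol\sigma})\Vert_{L^\infty(H^1)} \le C$ by the regularity hypothesis (\ref{regU}); the interpolation-error term is $\le C h^m\Vert(u(t_n),{\boldsymbol\sigma}(t_n))\Vert_{m+1} \le C$ by (\ref{EI01})--(\ref{EI02}) and (\ref{regU}), since $h$ stays bounded. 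Thus everything reduces to bounding the discrete error $\Vert(e^n_{u,h},e^n_{{\boldsymbol\sigma},h})\Vert_1$ pointwise in $n$.

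Here is the one slightly delicate point: Theorem~\ref{erteo} only delivers the $\ell^2H^1$ norm of the discrete error, whereas we need its $\ell^\infty H^1$ norm. To bridge this I would simply isolate the last term of the sum: the $\ell^2H^1$ part of (\ref{priorierr}) reads $k\sum_{j=1}^n \Vert(e^j_{u,h},e^j_{{\boldsymbol\sigma},h})\Vert_1^2 \le C(T)^2(k+h^{m+1})^2$, whence $k\,\Vert(e^n_{u,h},e^n_{{\boldsymbol\sigma},h})\Vert_1^2 \le 2C(T)^2(k^2+h^{2(m+1)})$ and, dividing by $k$, $\Vert(e^n_{u,h},e^n_{{\boldsymbol\sigma},h})\Vert_1^2 \le C(T)(k + h^{2(m+1)}/k)$. (The same can be read off directly from inequality (\ref{ee001}) by multiplying it by $k$, keeping $k\Vert(e^n_{u,h},e^n_{{\boldsymbol\sigma},h})\Vert_1^2$ on the left, and bounding the right-hand side by (\ref{regU}) together with the already-proven $\ell^\infty L^2$ bound for the errors.) Squaring the triangle inequality above and inserting the three bounds yields the first displayed estimate of the Remark.

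The rest is immediate. Under (\ref{uub}) the bound just obtained becomes $\Vert(u^n_h,{\boldsymbol\sigma}^n_h)\Vert_1^2 \le C + C(T)(k+C)$, which, for $k\le k_0$ fixed, is a constant independent of $(k,h)$; this is (\ref{uuc}). Consequently $k\Vert(u^n_h,{\boldsymbol\sigma}^n_h)\Vert_1^4 \le Ck$, so the uniqueness condition (\ref{uniq01}) of Theorem~\ref{USus} holds as soon as $k$ is small, independently of $h$. For the closing dichotomy I would note that if (\ref{uub}) fails (with a suitably fixed constant there), then $k/h^4 < h^{2(m+1)}/(Ch^4) = h^{2m-2}/C$, which is small for $m\ge1$ and $h$ bounded; hence condition (\ref{uua}) of Remark~\ref{inddom01} holds, and in either case uniqueness of the solution of (\ref{modelf02}) follows from smallness of $k$ alone.

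The only real obstacle is the $\ell^2H^1\to\ell^\infty H^1$ passage, and the whole point of the Remark is that this costs exactly one factor $1/k$, which is precisely why the inverse-type restriction (\ref{uub}) shows up; the remaining steps are routine bookkeeping with the triangle inequality, the interpolation bounds (\ref{EI01})--(\ref{EI02}), and the regularity (\ref{regU}).
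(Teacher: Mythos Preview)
Your argument is correct and supplies exactly the details the paper leaves implicit: the remark is stated without proof, and the intended route is precisely to extract a single term from the $\ell^2H^1$ bound in (\ref{priorierr}) at the cost of a factor $1/k$, then combine with the triangle inequality and (\ref{EI01})--(\ref{EI02}). Your handling of the closing dichotomy is also right; note only that for $m=1$ the bound $k/h^4<h^{2m-2}/C=1/C$ is merely bounded, not automatically small, so the ``suitably fixed constant'' in (\ref{uub}) must indeed be chosen large enough (namely $C\ge C_1/\varepsilon$ for the threshold $\varepsilon$ implicit in (\ref{uua})) --- you flag this, but it is worth making explicit since it is the only place the argument is not entirely mechanical.
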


\subsubsection{Error estimates for $e_v^n$ is strong norms}
%We introduce the corresponding notation:  $e_v^n=v(t_n)-v^n_h$ and 
%$\displaystyle\delta_t e_v^n=\frac{e_v^n - e_v^{n-1}}{k}$. 
Subtracting (\ref{modelf01eqv}) at $t=t_n$ and (\ref{edovf}), then $e_v^n$ satisfies
\begin{equation} \label{errv}
(\delta_t e_v^n,\bar{v}_h) + \langle A
e_v^n,\bar{v}_h\rangle = ((u(t_n)+u^n_h)e_u^n,\bar{v}_h)+ (\xi_3^n,\bar{v}_h),\ \ \forall \bar{v}_h\in V_h,
\end{equation}
where $\xi_3^n=\delta_t (v(t_n)) - v_t(t_n)$ is the consistency error associated to (\ref{edovf}). Now, considering the interpolation operator $\mathcal{R}_h^v$ defined in (\ref{interp2a}),  $e_v^n$ is decomposed as follows
\begin{equation}\label{v1a}
e_v^n=(\mathcal{I}-\mathcal{R}_h^v)v(t_n) + \mathcal{R}_h^v v(t_n) - v^n_h=e_{v,i}^n+e_{v,h}^n .
\end{equation}
%where $e_{v,i}^n$ is the interpolation error and $e_{v,h}^n$ is the discrete error of $v$. 
Then, taking into account (\ref{interp2a}), from (\ref{errv})-(\ref{v1a}), one has for all $\bar{v}_h\in V_h$:
\begin{eqnarray} \label{int01a}
&\left(\delta_t e_{v,h}^n,\bar{v}_h\right)&\!\!\!\!\! + ({A}_h
e_{v,h}^n,\bar{v}_h) =  (\xi_3^n,\bar{v}_h)+((u(t_n)+u^n_h)(e_{u,h}^n\!+e_{u,i}^n),\bar{v}_h)\!-\!(\delta_t e_{v,i}^n,\bar{v}_h).
\end{eqnarray}

\begin{tma}[\bf Strong estimates] \label{erteoV}
Under the hypotheses of Theorem \ref{erteo}, and assuming the regularity:
\begin{equation}\label{regV}
(v_t,v_{tt})\in
L^{2}(0,+\infty;H^{m+1}(\Omega)\times L^2(\Omega)),
\end{equation}
the following a priori error estimate holds
\begin{equation}\label{priorierr001}
\Vert  e_{v,h}^n\Vert_{l^\infty H^1\cap l^2 W^{1,6}}^2
\leq 
 K_3T \exp(K_4T)(k^2+h^{2(m+1)}).
\end{equation}
% where $C(T) = K_1T \exp(K_2T)$, with $K_1,K_2>0$ independent of $(k,h)$.
\end{tma}
\begin{proof}
Taking $\bar{v}_h={A}_h e_{v,h}^n$ in (\ref{int01a}) and using the H\"older and Young inequalities,  one has 
\begin{eqnarray}\label{ester3v}
&\delta_t&\!\!\!\!\!\left( \displaystyle \frac{1}{2} \Vert e_{v,h}^n\Vert_{1}^2 \right)+\frac{k}{2}\Vert \delta_t e_{v,h}^n\Vert_1^2+\frac{1}{2}\Vert  {A}_h e_{v,h}^n\Vert_{0}^2  \leq C\Vert \xi_3^n\Vert_{0}^2+C \Vert  u(t_n)+u^n_h\Vert_{L^3}^2\Vert e_{u,h}^n\Vert_{L^6}^2 \nonumber\\
&& + C\Vert (u(t_n)+u^n_h) e_{u,i}^n\Vert_{0}^2+C\Vert (\mathcal{I} - \mathcal{R}_h^v) \delta_t v(t_n)\Vert_{0}^2 .
\end{eqnarray}
Using the H\"older inequality, the interpolation error (\ref{EI01}), the stability property (\ref{SP01}) and the hypothesis (\ref{regU}),  one has
\begin{eqnarray}\label{errr0a}
&\Vert (u(t_n)+u^n_h) e_{u,i}^n\Vert_{0}^2&\!\!\!\leq C\Vert u(t_n) + \mathcal{R}_h^u u(t_n)\Vert_{L^\infty}^2\Vert e_{u,i}^n\Vert_{0}^2 + C\Vert e_{u,h}^n\Vert_{L^6}^2 \Vert e_{u,i}^n\Vert_{L^3}^2\nonumber\\
&&\!\!\! \leq C h^{2(m+1)} + C\Vert e_{u,h}^n\Vert_{1}^2.
\end{eqnarray}
Therefore, proceeding as in (\ref{Ea1a}) and (\ref{Ea1b}) and using (\ref{errr0a}), then 
 (\ref{ester3v}) becomes 
\begin{eqnarray*}
&\delta_t \left( \Vert e_{v,h}^n\Vert_{1}^2 \right)&\!\!\!\!\!+\Vert  {A}_h^v e_{v,h}^n\Vert_{0}^2  \leq C k \int_{t_{n-1}}^{t_n}\Vert v_{tt}(t)\Vert_{0}^2dt
+Ch^{2(m+1)}
\nonumber\\
&&\!\!\!\!\!\!\!\!\!+(C \Vert  u(t_n) +u^n_h\Vert_{L^3}^2+C)\Vert e_{u,h}^n\Vert_{1}^2
+\displaystyle\frac{C h^{2(m+1)}}{k}\int_{t_{n-1}}^{t_n}\Vert v_t \Vert_{m+1}^2 dt.
\end{eqnarray*}
Now, in order to bound the term $\Vert  u(t_n)+u^n_h\Vert_{L^3}^2$, we split the argument into two cases:
\begin{enumerate}
\item{{\bf Estimates assuming $h<< f(k)$ ($h$ small enough with respect to $k$):}\\
From (\ref{priorierr}) one has that $k\underset{n=1}{\overset{r}{\sum}} \Vert e_{u,h}^n\Vert_{1}^2\leq 
 K_1T \exp(K_2T) (k^2 +
h^{2(m+1)})$, which implies  
\begin{equation}\label{estInF}
\Vert e_{u,h}^n\Vert_{1}\leq 
 K_1T^{1/2} \exp(K_2T) \Big(k^{1/2} + \frac{h^{m+1}}{k^{1/2}} \Big).
\end{equation}
Moreover, using  (\ref{in3D}),  (\ref{SP01}), (\ref{regU}), (\ref{priorierr}) and (\ref{estInF}), 
 one obtains
\begin{eqnarray*}\label{DIE12}
&\Vert u(t_n) +u^n_h\Vert_{L^3}^2&\!\!\!\!\leq C \Vert u(t_n)\Vert_{L^3}^2 + C \Vert \mathcal{R}_h^u u(t_n)\Vert_{L^3}^2 + C\Vert e_{u,h}^n\Vert_{L^3}^2 \leq C+ C\Vert e_{u,h}^n\Vert_{0}\Vert e_{u,h}^n\Vert_{1}\nonumber\\
&& \!\!\!\! \leq C + 
 K_1T \exp(K_2T) (k+h^{m+1}) \Big(k^{1/2} + \frac{h^{m+1}}{k^{1/2}}\Big) ,
\end{eqnarray*}
hence $\Vert u(t_n) +u^n_h\Vert_{L^3}^2\leq C$ 
assuming the hypothesis 
\begin{equation}\label{k1}
\frac{h^{2(m+1)}}{k^{1/2}}\leq \frac{C}{
 K_1T \exp(K_2T)}.
\end{equation}}
\item{{\bf Estimates assuming $k<< g(k)$ ($k$ small enough with respect to $h$):}\\
Using the inverse inequality $\Vert u_h\Vert_{L^{3}}\leq \frac{C}{h^{1/2}}\Vert u_h\Vert_{0}$ for all $u_h\in U_h$, (\ref{SP01}), (\ref{regU}) and (\ref{priorierr}), 
\begin{eqnarray*}
&\Vert u(t_n) +u^n_h\Vert_{L^3}^2&\!\!\!\!\leq C \Vert u(t_n)\Vert_{L^3}^2 + C \Vert \mathcal{R}_h^u u(t_n)\Vert_{L^3}^2 + C\Vert e_{u,h}^n\Vert_{L^3}^2 \nonumber\\
&&\!\!\!\!  \leq  C+\frac{C}{h}\Vert e_{u,h}^n\Vert_{0}^2 \leq   C+
 K_1T \exp(K_2T) \frac{1}{h} (k^2 +h^{2(m+1)}) ,
\end{eqnarray*}
hence $\Vert u(t_n) +u^n_h\Vert_{L^3}^2\leq C$ 
assuming the hypothesis
\begin{equation}\label{k2}
\frac{k^2}{h}\le \frac{C}{
 K_1T \exp(K_2T)}.
\end{equation}}
\end{enumerate}
Therefore, since for any choice of $(k,h)$ either (\ref{k1}) or (\ref{k2}) hold, 
 one always obtains
\begin{eqnarray}\label{new02b}
&\delta_t \left( \Vert e_{v,h}^n\Vert_{1}^2 \right)&\!\!\!\!\!+\Vert  {A}_h e_{v,h}^n\Vert_{0}^2  \leq C k \int_{t_{n-1}}^{t_n}\Vert v_{tt}(t)\Vert_{0}^2dt\nonumber\\
&&\!\!\!\!\!\!\!\!\!+C\Vert e_{u,h}^n\Vert_{1}^2+Ch^{2(m+1)}+\displaystyle\frac{C h^{2(m+2)}}{k}\int_{t_{n-1}}^{t_n}\Vert v_t \Vert_{m+2}^2 dt.
\end{eqnarray}
Multiplying (\ref{new02b}) by $k$, adding from $n=1$ to $n=r$, recalling that $e_{v,h}^0=0$ and using  (\ref{priorierr}) and (\ref{regV}), 
the error estimate (\ref{priorierr001}) can be obtained. 
\end{proof}

\section{Linear iterative methods to approach the scheme \textbf{US}}
 Since the nonlinear scheme \textbf{US}   cannot be directly implemented, we propose two linear iterative methods to approach a solution $(u^n_h,{\boldsymbol\sigma}^n_h)$ of the scheme \textbf{US};  a Picard method and  Newton's method. 
The solvability of both methods and the convergence towards \textbf{US} will be proved.
\subsection{Picard Method} 
%In order to approximate the solution $(u^n_h,{\boldsymbol\sigma}^n_h)$ of the scheme \textbf{US}, 
% the following Picard method is considered. 
Let $(u^{n-1}_h,{\boldsymbol \sigma}^{n-1}_h)\in U_h\times {\boldsymbol \Sigma}_h$ be fixed. Given $u^{l-1}_h\in U_h$ (assuming $u^0_h=u^{n-1}_h$ at the first iteration step), find $(u^{l}_h,{\boldsymbol
\sigma}^{l}_h)\in U_h\times {\boldsymbol \Sigma}_h$ solving the linear coupled problem:
 \begin{equation}\label{linF1}
 \left\{
\begin{array}
[c]{lll}
\vspace{0.3 cm} \displaystyle\frac{1}{k}(u^l_h,\bar{u}_h)+ (\nabla
u^l_h, \nabla \bar{u}_h) + (u^{l-1}_h{\boldsymbol \sigma}^{l}_h,\nabla
\bar{u}_h)=\frac{1}{k}(u^{n-1}_h,\bar{u}_h), \ \forall \bar{u}_h\in U_h,\\
 \displaystyle\frac{1}{k}({\boldsymbol
\sigma}^l_h,\bar{\boldsymbol \sigma}_h)+ (B_h {\boldsymbol \sigma}^l_h,\bar{\boldsymbol \sigma}_h)- 2(u^{l-1}_h \nabla
u^l_h,\bar{\boldsymbol \sigma}_h)= \frac{1}{k}({\boldsymbol
\sigma}^{n-1}_h,\bar{\boldsymbol \sigma}_h), \ \forall \bar{\boldsymbol
\sigma}_h\in  {\boldsymbol \Sigma}_h,
\end{array}
\right.
\end{equation}
until that the following stopping criterion be satisfied:
\begin{equation}\label{stopp}
\max\left\{\displaystyle\frac{\Vert
u^l_h - u^{l-1}_h\Vert_{0}}{\Vert u^{l-1}_h\Vert_{0}},\displaystyle\frac{\Vert
{\boldsymbol \sigma}^l_h - {\boldsymbol \sigma}^{l-1}_h\Vert_{0}}{\Vert
{\boldsymbol \sigma}^{l-1}_h\Vert_{0}}\right\}\leq tol.
\end{equation}
%(with $tol>0$ being a tolerance parameter) 

\begin{tma} \label{piclinteo}{\bf(Unconditional  Solvability) }
There exists a unique $(u^l_h,{\boldsymbol\sigma}^l_h)$ solution of (\ref{linF1}).
\end{tma}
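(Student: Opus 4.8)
The plan is to exploit that, once $(u^{n-1}_h,{\boldsymbol\sigma}^{n-1}_h)\in U_h\times{\boldsymbol\Sigma}_h$ and $u^{l-1}_h\in U_h$ are fixed, problem (\ref{linF1}) is a \emph{linear} square system on the finite--dimensional space $U_h\times{\boldsymbol\Sigma}_h$ (as many scalar equations as degrees of freedom). Hence an injectivity argument suffices: existence of a solution for every data is equivalent to uniqueness, and it is enough to show that the associated homogeneous problem, obtained by replacing $u^{n-1}_h$ and ${\boldsymbol\sigma}^{n-1}_h$ by $0$, has only the trivial solution. Equivalently, one could invoke Lax--Milgram for the weighted bilinear form obtained by testing the first equation with $\bar u_h$ and adding $\tfrac12$ times the second equation tested with $\bar{\boldsymbol\sigma}_h$, which is bounded and --- by the computation below --- coercive on $U_h\times{\boldsymbol\Sigma}_h$.

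First I would take $\bar u_h=u^l_h$ in the homogeneous version of (\ref{linF1})$_1$ and $\bar{\boldsymbol\sigma}_h=\tfrac12{\boldsymbol\sigma}^l_h$ in the homogeneous version of (\ref{linF1})$_2$, and add the two identities. By the definition of $B_h$ in (\ref{opnew1})$_2$ one has $(B_h{\boldsymbol\sigma}^l_h,{\boldsymbol\sigma}^l_h)=\Vert{\boldsymbol\sigma}^l_h\Vert_{1}^2$, while the two coupling terms cancel exactly, since
\[
(u^{l-1}_h{\boldsymbol\sigma}^l_h,\nabla u^l_h)=(u^{l-1}_h\nabla u^l_h,{\boldsymbol\sigma}^l_h);
\]
this is precisely the algebraic cancellation already used to derive the discrete energy law (\ref{lawenerfydisce}). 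What remains is
\[
\frac1k\Vert u^l_h\Vert_{0}^2+\Vert\nabla u^l_h\Vert_{0}^2+\frac{1}{2k}\Vert{\boldsymbol\sigma}^l_h\Vert_{0}^2+\frac12\Vert{\boldsymbol\sigma}^l_h\Vert_{1}^2=0 .
\]

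Since $k>0$, this forces $u^l_h=0$ and ${\boldsymbol\sigma}^l_h=0$; thus the homogeneous system is injective and, being a square linear system, (\ref{linF1}) is uniquely solvable for the given data. I would emphasize that solvability is \emph{unconditional} (no smallness on $k$ or $h$): the zeroth--order terms $\tfrac1k(\cdot,\cdot)$ already yield coercivity, and the convective/coupling contributions are neutralized by the structural cancellation above rather than being absorbed by an estimate. Consequently there is no genuine obstacle here; the only point that needs care is selecting the correct weight $\tfrac12$ in front of the ${\boldsymbol\sigma}$--equation so that the two coupling terms match and cancel, exactly as in the nonlinear scheme \textbf{US}.
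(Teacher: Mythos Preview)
Your proof is correct and follows essentially the same approach as the paper: reducing to the homogeneous system by linearity/finite dimensionality, testing with $\bar u_h=u^l_h$ and $\bar{\boldsymbol\sigma}_h=\tfrac12{\boldsymbol\sigma}^l_h$, and using the structural cancellation $(u^{l-1}_h{\boldsymbol\sigma}^l_h,\nabla u^l_h)=(u^{l-1}_h\nabla u^l_h,{\boldsymbol\sigma}^l_h)$ to obtain a coercivity identity forcing the trivial solution. The only cosmetic difference is that the paper phrases injectivity via the difference of two solutions rather than the homogeneous data, which is of course equivalent.
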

\begin{proof}
Since (\ref{linF1}) can be rewritten as a square linear algebraic system, it suffices to prove uniqueness. Let $(u^l_{h,1},{\boldsymbol
\sigma}^l_{h,1}),(u^l_{h,2},{\boldsymbol \sigma}^l_{h,2})\in U_h\times {\boldsymbol \Sigma}_h$ be two possible solutions of (\ref{linF1}).
Then defining $u^l_h=u^l_{h,1}-u^l_{h,2}$ and ${\boldsymbol
\sigma}^l_h={\boldsymbol \sigma}^l_{h,1}-{\boldsymbol \sigma}^l_{h,2}$,
 one has 
 %that $(u^l_h,{\boldsymbol \sigma}^l_h)\in U_h\times  {\boldsymbol \Sigma}_h$ satisfies
\begin{equation}\label{lp001}
\displaystyle\frac{1}{k}(u^l_h,\bar{u}_h)+ (\nabla u^{l}_h,
\nabla\bar{u}_h)+(u^{l-1}_h{\boldsymbol \sigma}^l_h,\nabla \bar{u}_h)=0, \ \forall
\bar{u}_h\in U_h,
\end{equation}
\begin{equation}\label{lp002}
\displaystyle\frac{1}{k}({\boldsymbol \sigma}^l_h,\bar{\boldsymbol
\sigma}_h)+ ( B_h {\boldsymbol \sigma}^l_h,\bar{\boldsymbol \sigma}_h) - 2(u^{l-1}_h\nabla u^l_h,\bar{\boldsymbol \sigma}_h)=0,\ \forall \bar{\boldsymbol
\sigma}_h\in {\boldsymbol \Sigma}_h.
\end{equation}
Taking $\bar{u}_h=u^l_h$ and $\bar{\boldsymbol \sigma}_h=\displaystyle
\frac{1}{2}{\boldsymbol \sigma}^l_h$ in  (\ref{lp001}) and (\ref{lp002}), and adding the resulting equations, the terms $(u^{l-1}_h\nabla
u^l_h,{\boldsymbol \sigma}^l_h)$ cancel, obtaining
\begin{equation*}\label{lp003}
\frac{1}{2k} \Vert (u^l_h,{\boldsymbol
\sigma}^l_h)\Vert_{0}^2+ \frac{1}{2}\Vert (\nabla
u^l_h,
{\boldsymbol \sigma}^l_h)\Vert_{L^2\times H^1}^2 \leq0,
\end{equation*}
hence $\Vert (u^l_h,{\boldsymbol \sigma}^l_h)\Vert_{1}=0$, which implies $u^l_{h,1}=u^l_{h,2}$ and
${\boldsymbol \sigma}^l_{h,1}={\boldsymbol \sigma}^l_{h,2}$.
\end{proof}

\begin{tma}\label{TCPM} {\bf (Local uniqueness of scheme \textbf{US} and Convergence of Picard's method)}
Given $(u^{n-1}_h,{\boldsymbol \sigma}^{n-1}_h)$, there exists $r>0$ (large enough) such that if 
\begin{equation}\label{restk01}
\displaystyle k \Vert (u^{n-1}_h,{\boldsymbol\sigma}^{n-1}_h)\Vert_{1}^4 \ \ \ \mbox{ and } \ \ \ k \,r^4 \quad \hbox{are small enough},
\end{equation}
then the scheme \textbf{US} has a unique solution $(u^n_h,{\boldsymbol\sigma}^n_h)$ in $\overline{B}_r((u_h^{n-1},{\boldsymbol \sigma}^{n-1}_h)):=\{(u,{\boldsymbol\sigma})\in U_h \times {\boldsymbol\Sigma}_h: \Vert (u - u^{n-1}_h, {\boldsymbol \sigma} - {\boldsymbol \sigma}^{n-1}_h)\Vert_{1}\leq r\}$. Moreover, the sequence of solutions $\{u^l_h,{\boldsymbol\sigma}^l_h\}_{l\geq0}$ of the iterative algorithm (\ref{linF1}) converges to $(u^n_h,{\boldsymbol\sigma}^n_h)$ strongly in $H^1(\Omega)$.
\end{tma}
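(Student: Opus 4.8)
The plan is to recast the Picard iteration (\ref{linF1}) as a fixed-point iteration for a map $\mathcal{T}$ and to prove that, under the stated smallness conditions, $\mathcal{T}$ is a contraction on the closed ball $\overline{B}_r((u^{n-1}_h,{\boldsymbol\sigma}^{n-1}_h))$, so the Banach fixed-point theorem applies. Define $\mathcal{T}:U_h\times{\boldsymbol\Sigma}_h\to U_h\times{\boldsymbol\Sigma}_h$ by letting $\mathcal{T}(\widehat u,\widehat{\boldsymbol\sigma})=(u_h,{\boldsymbol\sigma}_h)$ be the solution of the linear system obtained from (\ref{linF1}) by freezing $u^{l-1}_h=\widehat u$ in the convective coefficients; it is well defined by Theorem~\ref{piclinteo}, depends only on $\widehat u$, and satisfies $(u^l_h,{\boldsymbol\sigma}^l_h)=\mathcal{T}^l(u^{n-1}_h,{\boldsymbol\sigma}^{n-1}_h)$. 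A pair is a fixed point of $\mathcal{T}$ if and only if it solves scheme~\textbf{US}; in particular, uniqueness of the fixed point of $\mathcal{T}$ in $\overline{B}_r$ is exactly the asserted local uniqueness for \textbf{US}. Taking $\bar{u}_h=1$ in the first equation of (\ref{linF1}) shows $\int_\Omega u^l_h=m_0$ for every $l$, so all differences used below have zero mean and hence $\Vert\cdot\Vert_1\simeq\Vert\nabla\cdot\Vert_0$ on them.

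\emph{$\mathcal{T}$ maps $\overline{B}_r$ into itself.} Testing the linear system for $(u_h,{\boldsymbol\sigma}_h)=\mathcal{T}(\widehat u,\widehat{\boldsymbol\sigma})$ with $(\bar u_h,\bar{\boldsymbol\sigma}_h)=(u_h,\tfrac12{\boldsymbol\sigma}_h)$, the trilinear terms $(\widehat u\,{\boldsymbol\sigma}_h,\nabla u_h)$ and $(\widehat u\,\nabla u_h,{\boldsymbol\sigma}_h)$ cancel, and the identity $(a-b,a)=\tfrac12(\Vert a\Vert_0^2-\Vert b\Vert_0^2+\Vert a-b\Vert_0^2)$ gives
\[
\Vert u_h-u^{n-1}_h\Vert_0^2+\tfrac12\Vert{\boldsymbol\sigma}_h-{\boldsymbol\sigma}^{n-1}_h\Vert_0^2+2k\Vert\nabla u_h\Vert_0^2+k\Vert{\boldsymbol\sigma}_h\Vert_1^2\le 2\,\mathcal{E}(u^{n-1}_h,{\boldsymbol\sigma}^{n-1}_h).
\]
Hence $\Vert(u_h,{\boldsymbol\sigma}_h)\Vert_1$ is bounded by a quantity depending only on $(u^{n-1}_h,{\boldsymbol\sigma}^{n-1}_h)$ (and $k$), uniformly in the frozen argument; choosing $r$ larger than this bound plus $\Vert(u^{n-1}_h,{\boldsymbol\sigma}^{n-1}_h)\Vert_1$ yields $\mathcal{T}(\overline{B}_r)\subseteq\overline{B}_r$. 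This is the meaning of ``$r$ large enough''.

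\emph{$\mathcal{T}$ is a contraction.} For $(\widehat u_1,\cdot),(\widehat u_2,\cdot)\in\overline{B}_r$ with images $(u_{h,i},{\boldsymbol\sigma}_{h,i})$, set $W=\widehat u_1-\widehat u_2$, $U=u_{h,1}-u_{h,2}$, $\Sigma={\boldsymbol\sigma}_{h,1}-{\boldsymbol\sigma}_{h,2}$. Subtracting the two systems, writing $\widehat u_1{\boldsymbol\sigma}_{h,1}-\widehat u_2{\boldsymbol\sigma}_{h,2}=\widehat u_1\Sigma+W{\boldsymbol\sigma}_{h,2}$ and $\widehat u_1\nabla u_{h,1}-\widehat u_2\nabla u_{h,2}=\widehat u_1\nabla U+W\nabla u_{h,2}$, and testing with $(U,\tfrac12\Sigma)$, the $\widehat u_1$-terms again cancel, leaving
\[
\tfrac1k\Vert U\Vert_0^2+\Vert\nabla U\Vert_0^2+\tfrac1{2k}\Vert\Sigma\Vert_0^2+\tfrac12\Vert\Sigma\Vert_1^2=-(W{\boldsymbol\sigma}_{h,2},\nabla U)+(W\nabla u_{h,2},\Sigma).
\]
The right-hand side is linear in $W$; I would bound it by H\"older's inequality, the interpolation inequality (\ref{in3D}), the embedding $H^1\hookrightarrow L^6$ and Young's inequality, using the a priori bounds from the previous step to control $\Vert{\boldsymbol\sigma}_{h,2}\Vert_{L^3}$ and $\Vert\nabla u_{h,2}\Vert_0$ by the data norms and powers of $r$, and absorbing the resulting $\Vert\nabla U\Vert_0^2$ and $\Vert\Sigma\Vert_1^2$ contributions into the left-hand side. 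Together with $\Vert U\Vert_1\simeq\Vert\nabla U\Vert_0$ this gives an estimate $\Vert(U,\Sigma)\Vert_1^2\le\theta\,\Vert W\Vert_1^2$ with $\theta=\theta(k,r,u_0,{\boldsymbol\sigma}_0)$, and the hypotheses that $k\Vert(u^{n-1}_h,{\boldsymbol\sigma}^{n-1}_h)\Vert_1^4$ and $kr^4$ are small enough are precisely what make $\theta<1$.

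With self-mapping and contraction in hand, the Banach fixed-point theorem provides a unique fixed point $(u^n_h,{\boldsymbol\sigma}^n_h)\in\overline{B}_r$ of $\mathcal{T}$, i.e. the unique solution of scheme~\textbf{US} in $\overline{B}_r$; and since $(u^{n-1}_h,{\boldsymbol\sigma}^{n-1}_h)\in\overline{B}_r$, the Picard orbit $(u^l_h,{\boldsymbol\sigma}^l_h)=\mathcal{T}^l(u^{n-1}_h,{\boldsymbol\sigma}^{n-1}_h)$ converges to it, with geometric rate $\theta$, strongly in $H^1(\Omega)$. The main obstacle is the contraction estimate: one must both exhibit the exact cancellation of the ``frozen'' trilinear terms and then split the residual trilinear terms so that the factor multiplying $\Vert W\Vert_1^2$ is a genuinely small quantity; keeping the dependence on $r$ (through (\ref{in3D}) and the bound of the self-mapping step) to the right power while tracking the $1/k$ from the discrete time derivative is the delicate point, and it is what pins down the precise form of the smallness conditions.
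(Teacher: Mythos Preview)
Your overall strategy (Banach fixed point on a ball around $(u^{n-1}_h,{\boldsymbol\sigma}^{n-1}_h)$, using the cancellation of the frozen trilinear terms) matches the paper's, and your contraction step is essentially the same as theirs. The gap is in your self-mapping estimate.

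From your energy identity you only get
\[
2k\Vert\nabla u_h\Vert_0^2+k\Vert{\boldsymbol\sigma}_h\Vert_1^2\le 2\,\mathcal{E}(u^{n-1}_h,{\boldsymbol\sigma}^{n-1}_h),
\]
so the $H^1$-bound on the image scales like $k^{-1/2}$. Consequently the radius $r$ you would need to enclose the image satisfies $r\gtrsim k^{-1/2}$, which forces $kr^4\gtrsim k^{-1}\to\infty$: the hypothesis ``$kr^4$ small'' can never be met simultaneously with ``$k$ small'', and the argument collapses. Saying the bound depends ``on $k$'' is not innocuous here; the whole point of the two smallness conditions in (\ref{restk01}) is that $r$ must be chosen \emph{independently of $k$}.

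The paper avoids this by testing not with $(u_h,\tfrac12{\boldsymbol\sigma}_h)$ but with the \emph{differences} $(w,{\boldsymbol\tau})=(u_h-u^{n-1}_h,{\boldsymbol\sigma}_h-{\boldsymbol\sigma}^{n-1}_h)$. After shifting, the right-hand side carries terms like $(\widehat u\,{\boldsymbol\sigma}^{n-1}_h,\nabla w)$ and $(\widehat u\,\nabla u^{n-1}_h,{\boldsymbol\tau})$, which are handled via the interpolation inequality (\ref{in3D}): the dangerous contributions produce a factor $C\Vert(u^{n-1}_h,{\boldsymbol\sigma}^{n-1}_h)\Vert_1^4\,\Vert(w,{\boldsymbol\tau})\Vert_0^2$ that is absorbed by the $\tfrac{1}{k}\Vert(w,{\boldsymbol\tau})\Vert_0^2$ term precisely under the assumption that $k\Vert(u^{n-1}_h,{\boldsymbol\sigma}^{n-1}_h)\Vert_1^4$ is small. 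What remains is
\[
\Vert(w,{\boldsymbol\tau})\Vert_1^2\le C\Vert(u^{n-1}_h,{\boldsymbol\sigma}^{n-1}_h)\Vert_1^2+\tfrac12\Vert\widehat u-u^{n-1}_h\Vert_1^2,
\]
which is $k$-independent and lets one pick $r^2\ge 2C\Vert(u^{n-1}_h,{\boldsymbol\sigma}^{n-1}_h)\Vert_1^2$; then $\widehat u\in\overline{B}_r$ gives $\Vert(w,{\boldsymbol\tau})\Vert_1\le r$. With this $r$ fixed, ``$kr^4$ small'' is a genuine (and achievable) restriction on $k$ alone, and your contraction step goes through.
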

\begin{proof}
Let the operator
$R:U_h\rightarrow U_h$ be given by
$R(\widetilde{u})=u$, where
$(u,{\boldsymbol\sigma})$ satisfies (\ref{linF1}) changing 
$u^{l-1}_h$ by $\widetilde{u}$ and
$(u^{l}_h,{\boldsymbol\sigma}^l_h)$ by $(u,{\boldsymbol\sigma})$, that is,
\begin{equation}\label{linFB1}
\displaystyle\frac{1}{k}(u,\bar{u}_h)+ (\nabla u, \nabla\bar{u}_h) +
(\widetilde{u}{\boldsymbol \sigma},\nabla
\bar{u}_h)=\frac{1}{k}(u^{n-1}_h,\bar{u}_h), \ \forall \bar{u}_h\in U_h,
\end{equation}
\vspace{-.5 cm}
\begin{equation}\label{linFB2}
\displaystyle\frac{1}{k}({\boldsymbol \sigma},\bar{\boldsymbol
\sigma}_h)+ ( B_h {\boldsymbol
\sigma},\bar{\boldsymbol \sigma}_h) - 2(\widetilde{u} \nabla
u,\bar{\boldsymbol \sigma}_h)= \frac{1}{k}({\boldsymbol
\sigma}^{n-1}_h,\bar{\boldsymbol \sigma}_h), \ \forall \bar{\boldsymbol
\sigma}_h\in {\boldsymbol \Sigma}_h.
\end{equation}
From Theorem \ref{piclinteo},  for any
$\widetilde{u}\in U_h$ there exists a unique $(u,{\boldsymbol \sigma})\in
U_h \times {\boldsymbol \Sigma}_h$ solution of
(\ref{linFB1})-(\ref{linFB2}). Thus, $R$ is well defined. 
Now, before proving that $R$ is contractive, we will construct a ball $\overline{B}_r(u^{n-1}_h)=\{u\in U_h : \Vert u - u^{n-1}_h\Vert_{1}\leq r\}\subset U_h$ such that $R(\overline{B}_r(u^{n-1}_h)) \subseteq \overline{B}_r(u^{n-1}_h)$. 
In order to define $r$,  one considers $w=u-u^{n-1}_h$ and ${\boldsymbol\tau}={\boldsymbol \sigma} - {\boldsymbol \sigma}^{n-1}_h$. 
Then, from (\ref{linFB1})-(\ref{linFB2}) one has
\begin{equation}\label{inv01}
\displaystyle\frac{1}{k}(w,\bar{u}_h)+ (\nabla  w, \nabla \bar{u}_h)=
-(\widetilde{u}{\boldsymbol \tau},\nabla
\bar{u}_h)-(\nabla u^{n-1}_h, \nabla\bar{u}_h)-(\widetilde{u}{\boldsymbol \sigma}^{n-1}_h,\nabla
\bar{u}_h), \ \forall \bar{u}_h\in U_h,
\end{equation}
\vspace{-.5 cm}
\begin{equation}\label{inv02}
\displaystyle\frac{1}{k}({\boldsymbol \tau},\bar{\boldsymbol
\sigma}_h)+ ( B_h {\boldsymbol
\tau},\bar{\boldsymbol \sigma}_h)=  2(\widetilde{u} \nabla w,\bar{\boldsymbol \sigma}_h)- (B_h{\boldsymbol
\sigma}^{n-1}_h,\bar{\boldsymbol \sigma}_h)+ 2(\widetilde{u} \nabla
u^{n-1}_h,\bar{\boldsymbol \sigma}_h) , \ \forall \bar{\boldsymbol
\sigma}_h\in {\boldsymbol \Sigma}_h.
\end{equation}
Taking $\bar{u}_h=w$ and $\displaystyle\bar{\boldsymbol \sigma}_h=\frac{1}{2}{\boldsymbol \tau}$ in (\ref{inv01})-(\ref{inv02}) and adding, the terms $(\widetilde{u} \nabla
w,{\boldsymbol \tau})$ cancel, and using the fact that $\displaystyle \int_\Omega w=0$ as well as the 3D interpolation inequality (\ref{in3D}),  it holds
\begin{eqnarray*}\label{inv0001}
&\displaystyle\frac{1}{2k} \Vert (w,{\boldsymbol
\tau})\Vert_{0}^2&\!\!\!\!+ \frac{1}{2}\Vert 
(w,{\boldsymbol
\tau})\Vert_{1}^2 \leq \frac{1}{8} \Vert 
(w,{\boldsymbol \tau})\Vert_{1}^2+ C \Vert 
(u^{n-1}_h, {\boldsymbol \sigma}^{n-1}_h)\Vert_{1}^2 \nonumber\\
&&\!\!\!\!\!\!\!\!\!\!\!\! +\frac{1}{8}\Vert \widetilde{u}-u^{n-1}_h\Vert_{1}^2+\frac{1}{8}\Vert u^{n-1}_h\Vert_{1}^2+ \frac{1}{8}\Vert
(w,{\boldsymbol\tau}) \Vert_{1}^2 +C \Vert (u_h^{n-1},{\boldsymbol \sigma}^{n-1}_h)\Vert_{1}^4 \Vert
(w,{\boldsymbol\tau}) \Vert_{0}^2.
\end{eqnarray*}
Therefore, 
\begin{eqnarray}\label{inv0002}
\left[\displaystyle\frac{1}{2k} - C \Vert (u_h^{n-1},{\boldsymbol \sigma}^{n-1}_h)\Vert_{1}^4\right] \Vert (w, {\boldsymbol
\tau})\Vert_{0}^2+ \frac{1}{4}\Vert 
(w,{\boldsymbol \tau})\Vert_{1}^2  \leq   C \Vert 
(u^{n-1}_h,
{\boldsymbol \sigma}^{n-1}_h)\Vert_{1}^2+\frac{1}{8}\Vert \widetilde{u}-u^{n-1}_h\Vert_{1}^2.
\end{eqnarray}
Thus, if $\displaystyle k<\frac{1}{2C \Vert (u_h^{n-1},{\boldsymbol \sigma}^{n-1}_h)\Vert_{1}^4}$, from (\ref{inv0002}), one concludes
\begin{equation}\label{inv0003}
\Vert 
(w,{\boldsymbol \tau})\Vert_{1}^2 \leq   C\Vert 
(u^{n-1}_h,
{\boldsymbol \sigma}^{n-1}_h)\Vert_{1}^2+\frac{1}{2}\Vert \widetilde{u}-u^{n-1}_h\Vert_{1}^2.
\end{equation}
Then, choosing $r>0$ large enough such that 
\begin{equation}\label{Pmd}
C\Vert 
(u^{n-1}_h,
{\boldsymbol \sigma}^{n-1}_h)\Vert_{1}^2\leq \displaystyle\frac{1}{2}r^2,
\end{equation}
from (\ref{inv0003}) one deduces that $R(\overline{B}_r(u^{n-1}_h)) \subseteq \overline{B}_r(u^{n-1}_h)$. 
Then,  the restriction of $R$ to $\overline{B}_r(u^{n-1}_h)$ is taken, that is, $R_r:\overline{B}_r(u^{n-1}_h)\rightarrow \overline{B}_r(u^{n-1}_h)$. Let us prove that $R_r$ is contractive. Let
$\widetilde{u}_1,\widetilde{u}_2\in \overline{B}_r(u^{n-1}_h)$,
 and $(u_1,{\boldsymbol \sigma}_1)$ and
$(u_2,{\boldsymbol \sigma}_2)$ solutions of (\ref{linFB1})-(\ref{linFB2}) related to $\widetilde{u}_1$ and $\widetilde{u}_2$ respectively (i.e., $R_r(\widetilde{u}_1)=u_1$ and $R_r(\widetilde{u}_2)=u_2$). 
Then, from
(\ref{linFB1})-(\ref{linFB2}) one has that $(u_1
-u_2,{\boldsymbol\sigma}_1 - {\boldsymbol\sigma}_2)\in
U_h \times {\boldsymbol \Sigma}_h$ satisfies
\begin{equation*}\label{linFBC1}
\displaystyle\frac{1}{k}(u_1 -u_2,\bar{u}_h)+ (\nabla (u_1
-u_2), \nabla \bar{u}_h) + (\widetilde{u}_1({\boldsymbol \sigma}_1 -
{\boldsymbol\sigma}_2),\nabla
\bar{u}_h)+((\widetilde{u}_1-\widetilde{u}_2)
{\boldsymbol\sigma}_2,\nabla \bar{u}_h)=0, \ \forall \bar{u}_h\in U_h,
\end{equation*}
\vspace{-.5 cm}
\begin{equation*}\label{linFBC2}
\displaystyle\frac{1}{k}({\boldsymbol\sigma}_1 -
{\boldsymbol\sigma}_2,\bar{\boldsymbol \sigma}_h)+
( B_h ({\boldsymbol\sigma}_1 -
{\boldsymbol\sigma}_2),\bar{\boldsymbol \sigma}_h) - 2(\widetilde{u}_1 \nabla (u_1 - u_2),\bar{\boldsymbol
\sigma}_h)- 2((\widetilde{u}_1-\widetilde{u}_2) \nabla
u_2,\bar{\boldsymbol \sigma}_h)= 0, \ \forall \bar{\boldsymbol
\sigma}_h\in {\boldsymbol \Sigma}_h.
\end{equation*}
Taking  $\bar{u}_h=u_1-u_2$,
$\displaystyle\bar{\boldsymbol\sigma}_h=\frac{1}{2}({\boldsymbol\sigma}_1
- {\boldsymbol\sigma}_2)$ and adding, the terms $(\widetilde{u}_1({\boldsymbol \sigma}_1 -
{\boldsymbol\sigma}_2),\nabla(u_1 - u_2))$ cancel, and using the H\"older and Young inequalities, the 3D interpolation inequality (\ref{in3D}) and taking into account that $\int_\Omega u_1 - u_2 =0$,  one obtains
\begin{eqnarray*}
&\displaystyle\frac{1}{2k}&\!\!\! \Vert (u_1 -
u_2, {\boldsymbol\sigma}_1 - {\boldsymbol\sigma}_2) \Vert_{0}^2+ \Vert  u_1 -
u_2\Vert_{1}^2+
\frac{1}{2}\Vert {\boldsymbol\sigma}_1 -
{\boldsymbol\sigma}_2\Vert_{1}^2 \nonumber\\
&&\!\!\!\leq C\Vert
\widetilde{u}_1-\widetilde{u}_2\Vert_{1}(
\Vert{\boldsymbol\sigma}_2\Vert_{1}\Vert u_1 -
u_2\Vert_{L^3}+
\Vert 
u_2\Vert_{1}\Vert{\boldsymbol\sigma}_1 -
{\boldsymbol\sigma}_2\Vert_{L^3})\nonumber\\
&&\!\!\!\leq
\displaystyle\frac{1}{4}\Vert \widetilde{u}_1-\widetilde{u}_2\Vert_{1}^2+\frac{1}{2}\Vert u_1 -
u_2\Vert_{1}^2 +\displaystyle\frac{1}{4}\Vert{\boldsymbol\sigma}_1 -
{\boldsymbol\sigma}_2\Vert_{1}^2 +C\Vert (u_1 -
u_2, {\boldsymbol\sigma}_1 - {\boldsymbol\sigma}_2) \Vert_{0}^2\Vert(u_2,{\boldsymbol\sigma}_2)\Vert_{1}^4.
\end{eqnarray*}
Therefore, 
\begin{eqnarray}\label{linFBC10}
&\displaystyle\frac{1}{k} &\!\!\!\!\!\Vert (u_1 -
u_2, {\boldsymbol\sigma}_1 - {\boldsymbol\sigma}_2) \Vert_{0}^2+ \Vert u_1 -
u_2\Vert_{1}^2+\frac{1}{2}\Vert
{\boldsymbol\sigma}_1 - {\boldsymbol\sigma}_2\Vert_{1}^2\nonumber\\
&&\!\!\!\!\!\leq \displaystyle\frac{1}{2}\Vert \widetilde{u}_1-\widetilde{u}_2\Vert_{1}^2+C\Vert (u_1 -
u_2, {\boldsymbol\sigma}_1 - {\boldsymbol\sigma}_2) \Vert_{0}^2\Vert(u_2,{\boldsymbol\sigma}_2)\Vert_{1}^4 .
\end{eqnarray}
Since  (\ref{inv0003}) and (\ref{Pmd}) imply $\Vert (u_2,{\boldsymbol\sigma}_2)\Vert^4_{1}\leq C(r^4+\Vert (u^{n-1}_h,{\boldsymbol\sigma}^{n-1}_h)\Vert^4_{1})$, then if  $\displaystyle \frac{1}{2k}>C r^4$ and $\displaystyle\frac{1}{2k}>C\Vert (u^{n-1}_h,{\boldsymbol\sigma}^{n-1}_h)\Vert_{1}^4$,   one has from (\ref{linFBC10}):
\begin{equation*}
\Vert R_r(\widetilde{u}_1) -
R_r(\widetilde{u}_2)\Vert_{1}^2\leq \frac{1}{2}
\Vert \widetilde{u}_1-\widetilde{u}_2\Vert_{1}^2,
\end{equation*}
i.e.~$R_r$ is contractive. Then, the Banach fixed point theorem implies the existence of a unique fixed point of $R_r$, $R_r(u)=u$. 
Thus, $(u,{\boldsymbol \sigma})$ is the unique solution of the scheme \textbf{US}
with $u\in\overline{B}_r(u^{n-1}_h)$. 
 Additionally,  the sequence $\{u^l_h,{\boldsymbol\sigma}^l_h\}_{l\geq0}$ of the iterative algorithm (\ref{linF1}) converges to the solution $(u^n_h,{\boldsymbol\sigma}^n_h)$.
\end{proof}

\begin{obs}
In the case of 2D domains, since estimate (\ref{strong01}) holds, then the restriction (\ref{restk01})$_1$ can be relaxed to $k\le K_0$, where $K_0$ is a constant depending on data $(\Omega,u_0,{\boldsymbol\sigma}_0)$, but independent of $(k,h)$ and $n$.
\end{obs}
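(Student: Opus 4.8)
The plan is to feed the uniform strong estimate (\ref{strong01}) — which, as pointed out right after its statement, can be established unconditionally in $2$D (following line to line the proof of Theorem~4.20 in \cite{FMD}), with $K_0>0$ depending only on $(\Omega,u_0,{\boldsymbol\sigma}_0)$ and independent of $(k,h)$ and $n$ — into the hypotheses of Theorem~\ref{TCPM}. First I would invoke Theorem~\ref{USus}, so that a solution sequence $\{(u^m_h,{\boldsymbol\sigma}^m_h)\}_{m\ge 0}$ of the scheme \textbf{US} is at hand; then (\ref{strong01}) applies to it and yields
\begin{equation*}
\Vert (u^{n-1}_h,{\boldsymbol\sigma}^{n-1}_h)\Vert_1^4 \le K_0^2, \qquad \forall\, n\ge 1,
\end{equation*}
where for $n=1$ one uses instead the $H^1$-stability of the projections $\mathcal{R}^u_h,\mathcal{R}^{\boldsymbol\sigma}_h$ to bound $\Vert(u^0_h,{\boldsymbol\sigma}^0_h)\Vert_1$ by $\Vert(u_0,{\boldsymbol\sigma}_0)\Vert_1$, hence again by data. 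Consequently $k\,\Vert(u^{n-1}_h,{\boldsymbol\sigma}^{n-1}_h)\Vert_1^4 \le k\,K_0^2$, and since the quantitative meaning of ``small enough'' in (\ref{restk01})$_1$ is that this product lies below a threshold $\varepsilon_0>0$ determined solely by the Sobolev and interpolation constants of $\Omega$ (inspect the smallness conditions used below (\ref{inv0002}) and below (\ref{linFBC10}) in the proof of Theorem~\ref{TCPM}), the condition holds as soon as $k\le \varepsilon_0/K_0^2$. Thus (\ref{restk01})$_1$ may be replaced by $k\le \widetilde K_0$ with $\widetilde K_0:=\varepsilon_0/K_0^2$ depending only on $(\Omega,u_0,{\boldsymbol\sigma}_0)$.

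For completeness one observes that the remaining restriction $k\,r^4$ small in (\ref{restk01})$_2$ collapses to the same condition. In the proof of Theorem~\ref{TCPM} the radius $r$ only needs to satisfy (\ref{Pmd}), i.e. $C\Vert(u^{n-1}_h,{\boldsymbol\sigma}^{n-1}_h)\Vert_1^2\le \frac{1}{2} r^2$; in $2$D the left-hand side is bounded by $C K_0$ uniformly in $(k,h,n)$ by (\ref{strong01}), so one may fix once and for all $r^2=2CK_0$, a constant depending only on data, after which $k\,r^4 = 4C^2K_0^2\,k$ is small precisely when $k$ is small. Hence, in $2$D, all the smallness assumptions of Theorem~\ref{TCPM} (and, in the same vein, the uniqueness condition (\ref{uniq01}) of Theorem~\ref{USus}) reduce to a single restriction $k\le \widetilde K_0$ with $\widetilde K_0$ depending only on $(\Omega,u_0,{\boldsymbol\sigma}_0)$.

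The statement is essentially a one-line corollary, so there is no real obstacle; the only point deserving attention is the logical ordering — unconditional existence (Theorem~\ref{USus}) must be used \emph{first}, and the $2$D derivation of (\ref{strong01}) must be known not to rely on any uniqueness result (it rests only on the discrete energy law (\ref{lawenerfydisce}) and the more regular estimates), so that no circularity arises when the resulting bound is substituted back into the hypotheses (\ref{restk01}).
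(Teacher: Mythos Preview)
Your argument is correct and follows exactly the approach the paper intends: substitute the uniform $H^1$ bound (\ref{strong01}), valid in $2$D, into (\ref{restk01})$_1$ so that $k\Vert(u^{n-1}_h,{\boldsymbol\sigma}^{n-1}_h)\Vert_1^4\le kK_0^2$ and the smallness reduces to a restriction on $k$ alone. In fact the paper records this as a bare remark without proof, so your write-up is simply a more detailed version of the same one-line observation; your additional paragraph on (\ref{restk01})$_2$ is a correct bonus that the remark as stated does not claim.
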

\begin{obs} 
 Notice that the restriction (\ref{restk01})$_1$  is equivalent to (\ref{uniq01}). Therefore, under  the hypotheses of Theorem \ref{erteo} and  arguing as in Remark~\ref{remH2},  the conclusion of Theorem \ref{TCPM} remains true only assuming $k$ small enough.
\end{obs}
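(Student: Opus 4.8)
The plan is to reduce both assertions to the uniform-in-$n$ control of $\Vert(u^{n}_h,{\boldsymbol\sigma}^{n}_h)\Vert_1$ that is already available: from (\ref{strong01}) in $2$D, and from Remark~\ref{remH2} in $3$D. First I would note that (\ref{restk01})$_1$ and (\ref{uniq01}) are two instances of the \emph{same} smallness requirement, ``$k\,\Vert(u^{j}_h,{\boldsymbol\sigma}^{j}_h)\Vert_1^4$ small enough'', read once at the previously computed level $j=n-1$ and once at the level $j=n$ being computed; so the whole point is to turn this requirement, and likewise the auxiliary smallness (\ref{restk01})$_2$ on the radius $r$, into ``$k$ small enough''.

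In $2$D this is immediate: (\ref{strong01}) gives $\Vert(u^{n}_h,{\boldsymbol\sigma}^{n}_h)\Vert_1^2\le K_0$ for all $n$, unconditionally, so both (\ref{restk01})$_1$ and (\ref{uniq01}) become ``$kK_0^2$ small''; and choosing the radius through (\ref{Pmd}) as $r^2\simeq\Vert(u^{n-1}_h,{\boldsymbol\sigma}^{n-1}_h)\Vert_1^2\le K_0$ makes (\ref{restk01})$_2$ become ``$kK_0^2$ small'' as well. In $3$D I would argue by the dichotomy already used in Remark~\ref{remH2}. If (\ref{uub}) holds, Remark~\ref{remH2} gives $\Vert(u^{n}_h,{\boldsymbol\sigma}^{n}_h)\Vert_1^2\le C+C(T)(k+h^{2(m+1)}/k)\le M$ with $M$ independent of $n$ and $(k,h)$ once $k\le k_0$, and the $2$D argument applies verbatim with $K_0$ replaced by $M$. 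If (\ref{uub}) fails, then, as in Remark~\ref{inddom01}, the inverse inequality applied to the weak estimate (\ref{weak01}) gives $\Vert(u^{j}_h,{\boldsymbol\sigma}^{j}_h)\Vert_1^4\le C/h^4$ for every $j$, hence via (\ref{Pmd}) $r^2\le C/h^2$; consequently $k\Vert(u^{n-1}_h,{\boldsymbol\sigma}^{n-1}_h)\Vert_1^4$, $k\Vert(u^{n}_h,{\boldsymbol\sigma}^{n}_h)\Vert_1^4$ and $kr^4$ are all bounded by $Ck/h^4$, which is precisely the smallness (\ref{uua}); moreover the failure of (\ref{uub}) forces (\ref{uua}) to hold, using $h^{2(m+1)}\le h^4$ for $m\ge1$ and taking the admissible constant in (\ref{uub}) large enough relative to the ``small enough'' threshold in (\ref{uua}). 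Since every pair $(k,h)$ falls into one of the two cases, in all of them the hypotheses (\ref{restk01}) and (\ref{uniq01}) are consequences of ``$k$ small enough'', so Theorem~\ref{USus} gives uniqueness and the Banach fixed point argument of Theorem~\ref{TCPM} gives convergence of the Picard iterates under that single restriction.

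I do not expect a genuine obstacle: all the analytic work is already done by the uniform bound (from (\ref{strong01}) or Remark~\ref{remH2}) and by Theorems~\ref{USus} and~\ref{TCPM} themselves. The two points that need care are that the constant $M$ in the uniform $H^1$-bound be genuinely independent of the time index $n$ — so that the smallness conditions at levels $n-1$ and $n$ really are interchangeable and the radius $r$ in (\ref{Pmd}) can be frozen once and for all — and that the alternative ``(\ref{uua}) or (\ref{uub})'' be truly exhaustive, which comes down to matching the ``small enough'' threshold in (\ref{uua}) with the constant permitted in (\ref{uub}).
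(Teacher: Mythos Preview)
Your proposal is correct and follows essentially the same route the paper intends. The paper gives no detailed argument for this remark; it simply asserts that (\ref{restk01})$_1$ has the same structure as (\ref{uniq01}) and that the dichotomy argument of Remark~\ref{remH2} (case (\ref{uub}) gives the uniform $H^1$-bound (\ref{uuc}); otherwise the inverse-inequality route of Remark~\ref{inddom01} yields (\ref{uua})) applies verbatim. Your elaboration fills in precisely these details, and your handling of (\ref{restk01})$_2$ via the minimal choice of $r$ in (\ref{Pmd}), so that $kr^4\simeq k\Vert(u^{n-1}_h,{\boldsymbol\sigma}^{n-1}_h)\Vert_1^4$, is the natural step the paper leaves implicit.
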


\subsection{Newton's Method}
%To approximate  $(u^n_h,{\boldsymbol\sigma}^n_h)$ a solution of the  scheme \textbf{US}, the Newton's algorithm will be considered.  
Let $(u^{n-1}_h,{\boldsymbol \sigma}^{n-1}_h)\in U_h\times {\boldsymbol \Sigma}_h$ be fixed. Given $(u^{l-1}_h,{\boldsymbol \sigma}^{l-1}_h)\in U_h\times {\boldsymbol \Sigma}_h$,  
%(assuming $(u^0_h,{\boldsymbol \sigma}^{0}_h)=(u^{n-1}_h,{\boldsymbol \sigma}^{n-1}_h)$ at the first iteration step), 
find $(u^{l}_h,{\boldsymbol \sigma}^{l}_h)\in U_h\times {\boldsymbol \Sigma}_h$ solving the linear coupled problem:
 \begin{equation}\label{NM01}
 \left\{
\begin{array}
[c]{lll}
\vspace{0.3 cm}\displaystyle\frac{1}{k}(u^l_h,\bar{u}_h)+ (\nabla 
u^l_h, \nabla\bar{u}_h) + (u^{l-1}_h{\boldsymbol \sigma}^{l}_h,\nabla
\bar{u}_h)+(u^{l}_h{\boldsymbol \sigma}^{l-1}_h,\nabla
\bar{u}_h)=\frac{1}{k}(u^{n-1}_h,\bar{u}_h)+(u^{l-1}_h{\boldsymbol \sigma}^{l-1}_h,\nabla
\bar{u}_h),\\
\displaystyle\frac{1}{k}({\boldsymbol
\sigma}^l_h,\bar{\boldsymbol \sigma}_h)+ (B_h {\boldsymbol
\sigma}^l_h,\bar{\boldsymbol \sigma}_h)- 2(u^{l-1}_h \nabla
u^l_h,\bar{\boldsymbol \sigma}_h) \\ 
\hspace{3 cm}- 2(u^{l}_h \nabla
u^{l-1}_h,\bar{\boldsymbol \sigma}_h)= \displaystyle\frac{1}{k}({\boldsymbol
\sigma}^{n-1}_h,\bar{\boldsymbol \sigma}_h)- 2(u^{l-1}_h \nabla
u^{l-1}_h,\bar{\boldsymbol \sigma}_h),
\end{array}
\right.
\end{equation} 
 for all $(\bar{u}_h,\bar{\boldsymbol\sigma}_h)\in  U_h\times{\boldsymbol \Sigma}_h$. 
 Iterations will repeat until the stopping criterion  \eqref{stopp} be satisfied.\\

The following result will be applied to obtain the convergence of Newton's method \eqref{NM01}.
\begin{lem}\label{lemconh1}
Let $X$ be a Banach space and consider a sequence $\{e_l\}_{l\geq0}\subseteq X$, such that
\begin{equation*}
\Vert e_l\Vert_{X}^2\leq C\left(\Vert e_{l-1}\Vert_{X}^2\right)^2, \ \ \forall l\geq 1\ \ \mbox{ and } \ \ \Vert e_0\Vert_{X}^2 \ \mbox{ is small enough.}
\end{equation*}
Then, $e_l$ converges to $0$ as $l\rightarrow +\infty$ in the $X$-norm.
\end{lem}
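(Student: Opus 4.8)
The plan is to reduce the recursion to a pure quadratic iteration for a rescaled scalar sequence and then observe that such an iteration converges to zero doubly exponentially once it starts below the threshold $1$. This makes precise the meaning of ``small enough'' in the hypothesis.

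First I would set $a_l:=\Vert e_l\Vert_X^2\geq 0$, so that the hypothesis becomes $a_l\leq C\,a_{l-1}^2$ for all $l\geq 1$. Multiplying this inequality by $C>0$ and introducing $b_l:=C\,a_l\geq 0$, one obtains the clean recursion $b_l\leq b_{l-1}^2$ for all $l\geq 1$. The precise interpretation of the smallness assumption is then $b_0=C\Vert e_0\Vert_X^2<1$ (for instance $C\Vert e_0\Vert_X^2\leq 1/2$ suffices).

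Next, by a straightforward induction on $l$, $b_l\leq b_0^{2^l}$: indeed $b_1\leq b_0^2=b_0^{2^1}$, and if $b_{l-1}\leq b_0^{2^{l-1}}$ then $b_l\leq b_{l-1}^2\leq \big(b_0^{2^{l-1}}\big)^2=b_0^{2^l}$. Since $0\leq b_0<1$, we have $b_0^{2^l}\to 0$ as $l\to+\infty$, hence $b_l\to 0$, hence $a_l=b_l/C\to 0$, i.e.\ $\Vert e_l\Vert_X^2\to 0$ and therefore $\Vert e_l\Vert_X\to 0$, which is the claim.

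There is no real obstacle here; the only point requiring a small amount of care is isolating the quantitative content of ``$\Vert e_0\Vert_X^2$ is small enough'', namely $C\Vert e_0\Vert_X^2<1$, so that the rescaled initial datum $b_0$ lies strictly inside the basin of attraction of the fixed point $0$ of the map $t\mapsto t^2$; once this is made explicit the convergence is immediate from the estimate $b_l\leq b_0^{2^l}$.
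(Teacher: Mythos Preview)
Your proof is correct and is the standard argument for quadratic convergence of Newton-type iterations: the rescaling $b_l=C\Vert e_l\Vert_X^2$ reduces the hypothesis to $b_l\le b_{l-1}^2$, and the explicit bound $b_l\le b_0^{2^l}$ together with $b_0<1$ yields the conclusion. You have also correctly identified the quantitative meaning of ``small enough'' as $C\Vert e_0\Vert_X^2<1$.

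Note that the paper states this lemma without proof (it is used only as an auxiliary tool for Theorem~\ref{CNMtma}), so there is no argument in the paper to compare against. Your proof is exactly what one would expect; it is elementary, complete, and in fact gives more than the bare statement, since it exhibits the doubly exponential rate $\Vert e_l\Vert_X^2\le C^{-1}(C\Vert e_0\Vert_X^2)^{2^l}$.
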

%In the following theorem,  Lemma \ref{lemconh1} will be used  to prove the convergence $(u^l_h,{\boldsymbol\sigma}^l_h)\rightarrow (u^n_h,{\boldsymbol\sigma}^n_h)$ in the $H^1(\Omega)$-norm. 
\begin{tma}\label{CNMtma}
{\bf (Conditional convergence of Newton's method)} Let $(u^n_h,{\boldsymbol\sigma}^n_h)$ be a fixed solution of the scheme \textbf{US} and let $(u^l_h,{\boldsymbol\sigma}^l_h)$ be any solution of (\ref{NM01}). There exists $\delta_0>0$ small enough such that if 
\begin{equation}\label{cmn0a}
 \Vert (e_u^{0},e_{\boldsymbol \sigma}^{0})\Vert_{1}^2\leq \delta_0, \ \  \ \ \displaystyle k\Vert (u^{n}_h,{\boldsymbol \sigma}^{n}_h)\Vert_{1}^4 \ \ \mbox{ and }  \ \ \displaystyle k (\delta_0)^2 \quad \hbox{are small enough,} 
\end{equation}
 then $\{u^l_h,{\boldsymbol\sigma}^l_h\}_{l\geq0}$ converges to $(u^n_h,{\boldsymbol\sigma}^n_h)$ in the $H^1(\Omega)$-norm as $l\rightarrow +\infty$.
\end{tma}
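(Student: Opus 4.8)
The plan is to derive an error equation for $(e_u^l, e_{\boldsymbol\sigma}^l):=(u^n_h - u^l_h, {\boldsymbol\sigma}^n_h - {\boldsymbol\sigma}^l_h)$ by subtracting the Newton system \eqref{NM01} from the nonlinear scheme \textbf{US} \eqref{modelf02}, and then to show that this error satisfies a quadratic recursion of the type demanded by Lemma~\ref{lemconh1} (with $X = H^1(\Omega)$), so that the conclusion follows once $\Vert(e_u^0,e_{\boldsymbol\sigma}^0)\Vert_1^2$ is small enough. First I would subtract the two systems: in the $u$-equation, the nonlinear term $(u^n_h{\boldsymbol\sigma}^n_h,\nabla\bar u_h)$ minus the Newton linearization $(u^{l-1}_h{\boldsymbol\sigma}^l_h + u^l_h{\boldsymbol\sigma}^{l-1}_h - u^{l-1}_h{\boldsymbol\sigma}^{l-1}_h,\nabla\bar u_h)$ should be algebraically rearranged; the key identity is that $ab - (a_1 b + a b_1 - a_1 b_1) = (a-a_1)(b-b_1)$ with $a=u^n_h$, $b={\boldsymbol\sigma}^n_h$, $a_1 = u^{l-1}_h$, $b_1 = {\boldsymbol\sigma}^{l-1}_h$. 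Thus the residual of the $u$-equation is exactly $(e_u^{l-1}e_{\boldsymbol\sigma}^{l-1},\nabla\bar u_h)$ plus the genuinely linear-in-$(e_u^l,e_{\boldsymbol\sigma}^l)$ terms $(e_u^l{\boldsymbol\sigma}^n_h + u^n_h e_{\boldsymbol\sigma}^l,\nabla\bar u_h)$ — wait, more carefully, after the telescoping one is left with $(u^{l-1}_h e_{\boldsymbol\sigma}^l + e_u^l {\boldsymbol\sigma}^{l-1}_h,\nabla\bar u_h)$ as the linear part and $(e_u^{l-1} e_{\boldsymbol\sigma}^{l-1},\nabla\bar u_h)$ as the quadratic remainder. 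Similarly, the $v$--type ${\boldsymbol\sigma}$-equation has the bilinear term $2(u^n_h\nabla u^n_h,\bar{\boldsymbol\sigma}_h)$ against the Newton term $2(u^{l-1}_h\nabla u^l_h + u^l_h\nabla u^{l-1}_h - u^{l-1}_h\nabla u^{l-1}_h,\bar{\boldsymbol\sigma}_h)$, giving linear part $2(u^{l-1}_h\nabla e_u^l + e_u^l\nabla u^{l-1}_h,\bar{\boldsymbol\sigma}_h)$ and quadratic remainder $2(e_u^{l-1}\nabla e_u^{l-1},\bar{\boldsymbol\sigma}_h)$.

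Next I would test the $e_u^l$-equation with $\bar u_h = e_u^l$ and the $e_{\boldsymbol\sigma}^l$-equation with $\bar{\boldsymbol\sigma}_h = \tfrac12 e_{\boldsymbol\sigma}^l$ and add. The cross terms $(u^{l-1}_h\nabla e_u^l, e_{\boldsymbol\sigma}^l)$ cancel (this is the same cancellation exploited throughout the paper), leaving on the left $\frac{1}{2k}\Vert(e_u^l,e_{\boldsymbol\sigma}^l)\Vert_0^2 + \Vert\nabla e_u^l\Vert_0^2 + \tfrac12\Vert e_{\boldsymbol\sigma}^l\Vert_1^2$. Note $\int_\Omega e_u^l = 0$ because both $u^n_h$ and $u^l_h$ have the same total mass $m_0$ (the Newton $u$-equation tested with $\bar u_h = 1$ conserves mass, as does \textbf{US}), so $\Vert\nabla e_u^l\Vert_0 \simeq \Vert e_u^l\Vert_1$. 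On the right one must bound: (i) the remaining linear-in-$e^l$ terms $(e_u^l{\boldsymbol\sigma}^{l-1}_h,\nabla e_u^l)$ and $2(e_u^l\nabla u^{l-1}_h,\tfrac12 e_{\boldsymbol\sigma}^l)$ — these are handled by Hölder with the $L^3$–$L^6$ split, the 3D interpolation inequality \eqref{in3D}, and Young's inequality, absorbing an $\varepsilon\Vert(e_u^l,e_{\boldsymbol\sigma}^l)\Vert_1^2$ into the left side and leaving a term $C\Vert(u^{l-1}_h,{\boldsymbol\sigma}^{l-1}_h)\Vert_1^4\Vert(e_u^l,e_{\boldsymbol\sigma}^l)\Vert_0^2$; and (ii) the quadratic remainders $(e_u^{l-1}e_{\boldsymbol\sigma}^{l-1},\nabla e_u^l)$ and $2(e_u^{l-1}\nabla e_u^{l-1},\tfrac12 e_{\boldsymbol\sigma}^l)$ — bounded by $\varepsilon\Vert(e_u^l,e_{\boldsymbol\sigma}^l)\Vert_1^2 + C_\varepsilon\Vert e_u^{l-1}\Vert_{L^6}^2(\Vert e_{\boldsymbol\sigma}^{l-1}\Vert_{L^3}^2 + \Vert\nabla e_u^{l-1}\Vert_{L^3}^2) \le \varepsilon\Vert(e_u^l,e_{\boldsymbol\sigma}^l)\Vert_1^2 + C_\varepsilon\Vert(e_u^{l-1},e_{\boldsymbol\sigma}^{l-1})\Vert_1^4$. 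Here I must note $\Vert(u^{l-1}_h,{\boldsymbol\sigma}^{l-1}_h)\Vert_1 \le \Vert(u^n_h,{\boldsymbol\sigma}^n_h)\Vert_1 + \Vert(e_u^{l-1},e_{\boldsymbol\sigma}^{l-1})\Vert_1$, so by induction, provided $\Vert(e_u^{l-1},e_{\boldsymbol\sigma}^{l-1})\Vert_1^2 \le \delta_0$, one has $\Vert(u^{l-1}_h,{\boldsymbol\sigma}^{l-1}_h)\Vert_1^4 \le C(\Vert(u^n_h,{\boldsymbol\sigma}^n_h)\Vert_1^4 + \delta_0^2)$.

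Putting these estimates together with $\varepsilon$ small, I get
\begin{equation*}
\Big[\frac{1}{2k} - C\big(\Vert(u^n_h,{\boldsymbol\sigma}^n_h)\Vert_1^4 + \delta_0^2\big)\Big]\Vert(e_u^l,e_{\boldsymbol\sigma}^l)\Vert_0^2 + \tfrac14\Vert(e_u^l,e_{\boldsymbol\sigma}^l)\Vert_1^2 \le C\Vert(e_u^{l-1},e_{\boldsymbol\sigma}^{l-1})\Vert_1^4.
\end{equation*}
Under the hypotheses \eqref{cmn0a}, namely $k\Vert(u^n_h,{\boldsymbol\sigma}^n_h)\Vert_1^4$ and $k\delta_0^2$ small enough, the bracket is nonnegative, hence $\Vert(e_u^l,e_{\boldsymbol\sigma}^l)\Vert_1^2 \le C\Vert(e_u^{l-1},e_{\boldsymbol\sigma}^{l-1})\Vert_1^4 = C\big(\Vert(e_u^{l-1},e_{\boldsymbol\sigma}^{l-1})\Vert_1^2\big)^2$. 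An inductive step then shows the bound $\Vert(e_u^l,e_{\boldsymbol\sigma}^l)\Vert_1^2 \le \delta_0$ propagates provided $\delta_0$ is chosen small enough that $C\delta_0 \le 1$ (which keeps the iterates in the ball and validates the use of the induction hypothesis at each step), and finally Lemma~\ref{lemconh1} with $X = H^1(\Omega)$ gives $\Vert(e_u^l,e_{\boldsymbol\sigma}^l)\Vert_1 \to 0$, i.e. $(u^l_h,{\boldsymbol\sigma}^l_h)\to(u^n_h,{\boldsymbol\sigma}^n_h)$ in $H^1(\Omega)$. The main obstacle I anticipate is purely bookkeeping: organizing the subtraction of the two nonlinear systems so that the quadratic-in-$e^{l-1}$ remainder is cleanly separated from the linear-in-$e^l$ part, and then making the induction on $\Vert(e_u^l,e_{\boldsymbol\sigma}^l)\Vert_1^2 \le \delta_0$ airtight — in particular tracking that the constant $C$ in the quadratic recursion is uniform in $l$, which it is because it depends only on $\Vert(u^n_h,{\boldsymbol\sigma}^n_h)\Vert_1$, $\delta_0$, and the domain, all independent of $l$.
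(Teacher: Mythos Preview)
Your proposal is correct and follows the paper's approach almost exactly: derive the error equations (the paper does this via a second-order Taylor expansion of $\mathbf{F}$, you via the algebraic identity $ab-(a_1b+ab_1-a_1b_1)=(a-a_1)(b-b_1)$, arriving at the same equations), test with $(e_u^l,e_{\boldsymbol\sigma}^l)$, bound the linear-in-$e^l$ terms using the 3D interpolation inequality \eqref{in3D} to produce the $\Vert(u^{l-1}_h,{\boldsymbol\sigma}^{l-1}_h)\Vert_1^4$ coefficient, propagate the bound $\Vert(e_u^{l-1},e_{\boldsymbol\sigma}^{l-1})\Vert_1^2\le\delta_0$ by induction, and conclude via Lemma~\ref{lemconh1}. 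One technical fix is needed: your H\"older split of the second quadratic remainder $(e_u^{l-1}\nabla e_u^{l-1}, e_{\boldsymbol\sigma}^l)$ places $\nabla e_u^{l-1}$ in $L^3$, which $\Vert e_u^{l-1}\Vert_1$ does \emph{not} control in 3D; instead use $\Vert e_u^{l-1}\Vert_{L^3}\Vert\nabla e_u^{l-1}\Vert_{0}\Vert e_{\boldsymbol\sigma}^l\Vert_{L^6}\le C\Vert e_u^{l-1}\Vert_1^2\Vert e_{\boldsymbol\sigma}^l\Vert_1$, or---as the paper does---integrate by parts first to $(\vert e_u^{l-1}\vert^2,\nabla\cdot e_{\boldsymbol\sigma}^l)\le \Vert e_u^{l-1}\Vert_{L^4}^2\Vert e_{\boldsymbol\sigma}^l\Vert_1$.
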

\begin{proof}
We can rewrite problem (\ref{modelf02}) in a vectorial way,
\begin{equation}\label{CMN01}
(0,0)=\langle \mathbf{F}(u^n_h,{\boldsymbol\sigma}^n_h), (\bar{u}_h,\bar{\boldsymbol\sigma}_h)\rangle=\left( \langle F_1(u^n_h,{\boldsymbol\sigma}^n_h),\bar{u}_h\rangle, \langle F_2(u^n_h,{\boldsymbol\sigma}^n_h), \bar{\boldsymbol\sigma}_h\rangle\right),
\end{equation}
where each $F_i(u^n_h,{\boldsymbol\sigma}^n_h)$ corresponds with the equation (\ref{modelf02})$_i$ ($i=1,2$). Therefore, Newton's method (\ref{NM01}) reads
\begin{equation*}
\langle \mathbf{F}'(u^{l-1}_h,{\boldsymbol\sigma}^{l-1}_h)(u^{l}_h-u^{l-1}_h,{\boldsymbol\sigma}^l_h - {\boldsymbol\sigma}^{l-1}_h),(\bar{u}_h,\bar{\boldsymbol\sigma}_h)\rangle =- \langle \mathbf{F}(u^{l-1}_h,{\boldsymbol\sigma}^{l-1}_h), (\bar{u}_h,\bar{\boldsymbol\sigma}_h)\rangle,
\end{equation*}
which can be rewritten as 
\begin{eqnarray}\label{Nal}
&&\!\!\!\!\!\!\!\!(0,0)=( \langle F_1(u^{l-1}_h,{\boldsymbol\sigma}^{l-1}_h),\bar{u}_h\rangle, \langle F_2(u^{l-1}_h,{\boldsymbol\sigma}^{l-1}_h), \bar{\boldsymbol\sigma}_h\rangle) \nonumber\\
&& \!\!\!\! + ( \langle F'_1(u^{l-1}_h,{\boldsymbol\sigma}^{l-1}_h)(u^{l}_h-u^{l-1}_h,{\boldsymbol\sigma}^l_h - {\boldsymbol\sigma}^{l-1}_h),\bar{u}_h\rangle, \langle F'_2(u^{l-1}_h,{\boldsymbol\sigma}^{l-1}_h)(u^{l}_h-u^{l-1}_h,{\boldsymbol\sigma}^l_h - {\boldsymbol\sigma}^{l-1}_h),\bar{\boldsymbol\sigma}_h\rangle).\ \ \ \ \ \ \ 
\end{eqnarray}
Moreover, from a vectorial Taylor's formula of $\mathbf{F}(u^n_h,{\boldsymbol\sigma}^n_h)$ with center at $(u^{l-1}_h,{\boldsymbol\sigma}^{l-1}_h)$, and using (\ref{CMN01}),  one has that
\begin{eqnarray}\label{CMN03}
&(0,0)&\!\!\!=\displaystyle\left( \langle F_1(u^n_h,{\boldsymbol\sigma}^n_h),\bar{u}_h\rangle, \langle F_2(u^n_h,{\boldsymbol\sigma}^n_h),\bar{\boldsymbol\sigma}_h\rangle \right)\nonumber\\
&&\!\!\!\!\!\! \!\! = \left( \langle F_1(u^{l-1}_h,{\boldsymbol\sigma}^{l-1}_h),\bar{u}_h\rangle, \langle F_2(u^{l-1}_h,{\boldsymbol\sigma}^{l-1}_h),\bar{\boldsymbol\sigma}_h\rangle \right)\nonumber\\
&&\!\!\!\!\!\! \!\! +  \left( \langle F'_1(u^{l-1}_h,{\boldsymbol\sigma}^{l-1}_h)(u^{n}_h-u^{l-1}_h,{\boldsymbol\sigma}^n_h - {\boldsymbol\sigma}^{l-1}_h),\bar{u}_h\rangle, \langle F'_2(u^{l-1}_h,{\boldsymbol\sigma}^{l-1}_h)(u^{n}_h-u^{l-1}_h,{\boldsymbol\sigma}^n_h - {\boldsymbol\sigma}^{l-1}_h),\bar{\boldsymbol\sigma}_h\rangle \right)\nonumber\\
&&  \!\!\!\!\!\! \!\!+ \displaystyle\frac{1}{2} \Big( \langle(u^{n}_h-u^{l-1}_h,{\boldsymbol\sigma}^n_h - {\boldsymbol\sigma}^{l-1}_h)^t F''_1(u^{n+\varepsilon},{\boldsymbol\sigma}^{n+\varepsilon})(u^{n}_h-u^{l-1}_h,{\boldsymbol\sigma}^n_h - {\boldsymbol\sigma}^{l-1}_h),\bar{u}_h\rangle, \nonumber\\
&&\!\!\!\!\!  \mbox{ }  \mbox{ }   \mbox{ }  \langle (u^{n}_h-u^{l-1}_h,{\boldsymbol\sigma}^n_h - {\boldsymbol\sigma}^{l-1}_h)^t F''_2(u^{n+\varepsilon},{\boldsymbol\sigma}^{n+\varepsilon})(u^{n}_h-u^{l-1}_h,{\boldsymbol\sigma}^n_h - {\boldsymbol\sigma}^{l-1}_h),\bar{\boldsymbol\sigma}_h\rangle \Big),
\end{eqnarray}
where $u^{n+\varepsilon}=\varepsilon u^n_h + (1-\varepsilon)u^{l-1}_h$, ${\boldsymbol\sigma}^{n+\varepsilon}=\varepsilon {\boldsymbol\sigma}^n_h + (1-\varepsilon) {\boldsymbol\sigma}^{l-1}_h$, and $F'_i$ and $F''_i$ denote the Jacobian and the Hessian of $F_i$ ($i=1,2$), respectively. 
Therefore, denoting by $e_u^l=u^n_h - u^l_h$ and $e_{\boldsymbol\sigma}^l={\boldsymbol\sigma}^n_h-{\boldsymbol\sigma}^l_h$, from (\ref{Nal})-(\ref{CMN03}), we deduce 
\begin{eqnarray}\label{CMN05}
&&\displaystyle \left\langle \frac{\partial F_1}{\partial u} (u^{l-1}_h,{\boldsymbol\sigma}^{l-1}_h) (e_u^l) +\frac{\partial F_1}{\partial {\boldsymbol\sigma}} (u^{l-1}_h,{\boldsymbol\sigma}^{l-1}_h) (e_{\boldsymbol\sigma}^l), \bar{u}_h\right\rangle\nonumber\\
&& \hspace{3,5 cm} =-\displaystyle\frac{1}{2} \langle(e_u^{l-1},e_{\boldsymbol\sigma}^{l-1})^t F''_1(u^{n+\varepsilon},{\boldsymbol\sigma}^{n+\varepsilon})(e_u^{l-1},e_{\boldsymbol\sigma}^{l-1}),\bar{u}_h\rangle,
\end{eqnarray}
\begin{eqnarray}\label{CMN06}
&&\displaystyle \left\langle \frac{\partial F_2}{\partial u} (u^{l-1}_h,{\boldsymbol\sigma}^{l-1}_h) (e_u^l) +\frac{\partial F_2}{\partial {\boldsymbol\sigma}} (u^{l-1}_h,{\boldsymbol\sigma}^{l-1}_h) (e_{\boldsymbol\sigma}^l), \bar{\boldsymbol\sigma}_h\right\rangle\nonumber\\
&& \hspace{3,5 cm}=-\displaystyle\frac{1}{2} \langle(e_u^{l-1},e_{\boldsymbol\sigma}^{l-1})^t F''_2(u^{n+\varepsilon},{\boldsymbol\sigma}^{n+\varepsilon})(e_u^{l-1},e_{\boldsymbol\sigma}^{l-1}),\bar{\boldsymbol\sigma}_h\rangle.
\end{eqnarray}
Thus, from (\ref{CMN05})-(\ref{CMN06}) and taking into account that $F''_i$ are constant matrices, we arrive at
\begin{equation}\label{CMN07}
\displaystyle\frac{1}{k}(e_u^l,\bar{u}_h)+ (\nabla
e_u^l, \nabla\bar{u}_h)+ (e_u^{l}{\boldsymbol \sigma}^{l-1}_h,\nabla
\bar{u}_h)+(u^{l-1}_he_{\boldsymbol \sigma}^{l},\nabla
\bar{u}_h)=-(e_u^{l-1}e_{\boldsymbol \sigma}^{l-1},\nabla
\bar{u}_h), \ \ \forall \bar{u}_h\in U_h,
\end{equation}
\begin{equation}\label{CMN08}
\displaystyle\frac{1}{k}(e_{\boldsymbol
\sigma}^l,\bar{\boldsymbol \sigma}_h) + (B_h e_{\boldsymbol
\sigma}^l,\bar{\boldsymbol \sigma}_h)
+2(u^{l-1}_h e_u^l,\nabla\cdot
\bar{\boldsymbol \sigma}_h)=-(\vert e_u^{l-1}\vert^2,\nabla\cdot
\bar{\boldsymbol \sigma}_h), \ \ \forall \bar{\boldsymbol\sigma}_h\in {\boldsymbol \Sigma}_h.
\end{equation}
Taking  $\bar{u}_h=e_u^l$ and $\bar{\boldsymbol \sigma}_h=e_{\boldsymbol \sigma}^l$ in (\ref{CMN07}) and (\ref{CMN08}) respectively, taking into account that $\displaystyle\int_\Omega e_u^l=0$ and using the H\"older and Young inequalities as well as the 3D interpolation inequality (\ref{in3D}), 
\begin{equation}\label{CMN09}
\displaystyle\frac{1}{k}\Vert
(e_u^l,e_{\boldsymbol
\sigma}^l)\Vert_{0}^2+\Vert  (e_u^l ,e_{\boldsymbol
\sigma}^l)\Vert_{1}^2 \leq \frac{1}{2}\Vert  (e_u^l ,e_{\boldsymbol
\sigma}^l)\Vert_{1}^2 + C\Vert (e_u^l ,e_{\boldsymbol
\sigma}^l)\Vert_{0}^{2}\Vert( u^{l-1}_h,{\boldsymbol \sigma}^{l-1}_h)\Vert_{1}^4 +C \Vert (e_u^{l-1},e_{\boldsymbol \sigma}^{l-1})\Vert_{1}^4. 
\end{equation}
In order to use an inductive strategy, the following hypothesis will be assumed
\begin{equation*}
\Vert (e_u^{l-1},e_{\boldsymbol \sigma}^{l-1})\Vert_{1}^2\leq \delta_0,
\end{equation*}
which implies that
\begin{equation}\label{CMN10}
\Vert (u^{l-1}_h,{\boldsymbol \sigma}^{l-1}_h)\Vert_{1}\leq \Vert (u^{n}_h,{\boldsymbol \sigma}^{n}_h)\Vert_{1}+\sqrt{\delta_0},
\end{equation}
where $\delta_0>0$ is a small enough constant. 
Therefore, from (\ref{CMN09})-(\ref{CMN10}), one has
\begin{equation}\label{CMN11}
\left(\frac{1}{k}-C(\Vert (u^{n}_h,{\boldsymbol \sigma}^{n}_h)\Vert_{1}^4+(\delta_0)^2)\right)\Vert
(e_u^l,e_{\boldsymbol
\sigma}^l)\Vert_{0}^2+\frac{1}{2}\Vert (e_u^l, e_{\boldsymbol
\sigma}^l)\Vert_{1}^2\leq C\left(\Vert (e_u^{l-1}, e_{\boldsymbol \sigma}^{l-1})\Vert_{1}^2\right)^2.
\end{equation}
Thus, if $\displaystyle\frac{1}{2k}>C\Vert (u^{n}_h,{\boldsymbol \sigma}^{n}_h)\Vert_{1}^4$ and $\displaystyle\frac{1}{2k}>C(\delta_0)^2$ (which is possible owing to (\ref{cmn0a})$_2$ and (\ref{cmn0a})$_3$), one has from (\ref{CMN11}) 
\begin{equation}\label{CMN12}
\Vert (e_u^l, e_{\boldsymbol
\sigma}^l)\Vert_{1}^2\leq C\left(\Vert (e_u^{l-1}, e_{\boldsymbol \sigma}^{l-1})\Vert_{1}^2\right)^2.
\end{equation}
Therefore, choosing $\delta_0$ small enough such that $\delta_0C\leq 1$, the inequality $\Vert (e_u^{l},e_{\boldsymbol \sigma}^{l})\Vert_{1}^2\leq \delta_0$ holds. 
Indeed,  assuming $\Vert (e_u^{0},e_{\boldsymbol \sigma}^{0})\Vert_{1}^2\leq \delta_0$, 
 the following recurrence expression is obtained
\begin{equation}\label{rec01}
\Vert (e_u^{l},e_{\boldsymbol \sigma}^{l})\Vert_{1}^2\leq \Vert (e_u^{l-1},e_{\boldsymbol \sigma}^{l-1})\Vert_{1}^2\leq \cdot\cdot\cdot\leq\Vert (e_u^{0},e_{\boldsymbol \sigma}^{0})\Vert_{1}^2\leq \delta_0. 
\end{equation}
Hence, from (\ref{CMN12})  the  hypotheses of Lemma \ref{lemconh1} are satisfied, and we conclude the convergence of $(u^l_h,{\boldsymbol\sigma}^l_h)$ to $(u^n_h,{\boldsymbol\sigma}^n_h)$ in the $H^1(\Omega)$-norm.
\end{proof}
\begin{obs}\label{NMb}
If (\ref{strong01}) is satisfied (recall that this estimate holds, at least, in 2D domains), we can determine $\delta_0$ in terms of $k$. Indeed, from (\ref{stdta}), we have that 
\begin{equation*}
\Vert (e_u^{0},e_{\boldsymbol \sigma}^{0})\Vert_{1}^2=\Vert (u^n_h - u^{n-1}_h,{\boldsymbol \sigma}^n_h-{\boldsymbol \sigma}^{n-1}_h)\Vert_{1}^2\leq {\color{blue} K_2 k,}
\end{equation*}
where $K_2$ is the constant appearing in \eqref{stdta}. Therefore, we can consider {\color{blue}$\delta_0:=K_2 k$}. Then, the hypotheses (\ref{cmn0a}) in Theorem \ref{CNMtma} are only imposed on $k$, and (\ref{cmn0a})$_2$ is reduced to $k\leq K_0$, where $K_0$ is a constant depending on data $(\Omega,u_0,{\boldsymbol\sigma}_0)$, but independent of $(k,h)$ and $n$.
\end{obs}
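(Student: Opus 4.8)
The plan is to bound the initial Newton error $(e_u^0,e_{\boldsymbol\sigma}^0)$ directly in terms of the time step $k$ by invoking the more regular estimate (\ref{morest01}), and then to re-read the three hypotheses of Theorem \ref{CNMtma} as restrictions on $k$ alone.

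First I would recall that Newton's method is initialized with $(u^0_h,{\boldsymbol\sigma}^0_h)=(u^{n-1}_h,{\boldsymbol\sigma}^{n-1}_h)$, so the starting error relative to the target solution $(u^n_h,{\boldsymbol\sigma}^n_h)$ is exactly $e_u^0=u^n_h-u^{n-1}_h$ and $e_{\boldsymbol\sigma}^0={\boldsymbol\sigma}^n_h-{\boldsymbol\sigma}^{n-1}_h$. By the very definition of the discrete derivative $\delta_t u^n_h=(u^n_h-u^{n-1}_h)/k$ (and analogously for ${\boldsymbol\sigma}$), this gives $(e_u^0,e_{\boldsymbol\sigma}^0)=k\,(\delta_t u^n_h,\delta_t{\boldsymbol\sigma}^n_h)$, whence
\begin{equation*}
\Vert(e_u^0,e_{\boldsymbol\sigma}^0)\Vert_1^2=k^2\,\Vert(\delta_t u^n_h,\delta_t{\boldsymbol\sigma}^n_h)\Vert_1^2 .
\end{equation*}

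The key step is to extract a pointwise-in-time control of $\Vert(\delta_t u^n_h,\delta_t{\boldsymbol\sigma}^n_h)\Vert_1^2$ from (\ref{morest01}), which a priori only bounds a running sum; I expect this to be the main (though mild) obstacle. The trick is to exploit the arbitrariness of the starting index and the window length in (\ref{morest01}): retaining only the single summand $m=n$ (formally, taking $n_0=n-1$ together with a window of length one) leaves
\begin{equation*}
k\,\Vert(\delta_t u^n_h,\delta_t{\boldsymbol\sigma}^n_h)\Vert_1^2\leq K_4+K_5k .
\end{equation*}
Multiplying by $k$ and combining with the identity above yields $\Vert(e_u^0,e_{\boldsymbol\sigma}^0)\Vert_1^2\leq k(K_4+K_5k)$, so the choice $\delta_0:=k(K_4+K_5k)$ makes hypothesis (\ref{cmn0a})$_1$ hold automatically.

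Finally I would verify that the two remaining conditions in (\ref{cmn0a}) also collapse to restrictions on $k$. Under (\ref{strong01}) one has $\Vert(u^n_h,{\boldsymbol\sigma}^n_h)\Vert_1^4\leq K_0^2$ uniformly in $n$, so (\ref{cmn0a})$_2$ reads $k\,K_0^2$ small enough, i.e.\ it reduces to $k\leq K_0$ for a threshold depending only on $(\Omega,u_0,{\boldsymbol\sigma}_0)$. With the above choice one also has $\delta_0=O(k)$ and $k(\delta_0)^2=k^3(K_4+K_5k)^2=O(k^3)$, so both the smallness of $\delta_0$ required by the induction in Theorem \ref{CNMtma} and condition (\ref{cmn0a})$_3$ hold for $k$ small. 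Hence all hypotheses of Theorem \ref{CNMtma} are guaranteed by a single smallness requirement on the time step, and (\ref{cmn0a})$_2$ is the binding one, namely $k\leq K_0$.
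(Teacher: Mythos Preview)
Your proposal is correct and follows exactly the (implicit) reasoning of the paper: the remark is stated without a separate proof, and your argument---writing $(e_u^0,e_{\boldsymbol\sigma}^0)=k(\delta_t u^n_h,\delta_t{\boldsymbol\sigma}^n_h)$, applying (\ref{morest01}) with $n_0=n-1$ and a single-term window, and then reading (\ref{cmn0a})$_2$--(\ref{cmn0a})$_3$ through (\ref{strong01}) and $\delta_0=O(k)$---is precisely the computation the paper has in mind.
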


\begin{obs} Since restriction (\ref{cmn0a})$_2$ is equivalent to  (\ref{uniq01}), 
analogously as in Remark \ref{inddom01}, under  the hypotheses of Theorem \ref{erteo}, the conclusion of Theorem \ref{CNMtma} remains true  assuming $k$ small enough, (\ref{cmn0a})$_1$ and (\ref{cmn0a})$_3$.
\end{obs}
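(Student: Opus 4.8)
The plan is to show that the middle restriction (\ref{cmn0a})$_2$ is, under the hypotheses of Theorem~\ref{erteo}, a consequence of ``$k$ small enough'', so that it may be dropped from the list of standing assumptions. First I would note that (\ref{cmn0a})$_2$ reads $k\Vert(u^n_h,{\boldsymbol\sigma}^n_h)\Vert_1^4$ small enough, which is \emph{verbatim} the uniqueness restriction (\ref{uniq01}); hence any uniform $H^1$-bound on the discrete solution that trivializes (\ref{uniq01}) trivializes (\ref{cmn0a})$_2$ as well, and no new estimate is required. This is the only content of the phrase ``(\ref{cmn0a})$_2$ is equivalent to (\ref{uniq01})''.

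Next I would reproduce the reduction already carried out in Remark~\ref{remH2}. Under the regularity hypotheses of Theorem~\ref{erteo}, the error estimate (\ref{priorierr}) gives, for each fixed $n$, the pointwise consequence $k\Vert(e_{u,h}^n,e_{{\boldsymbol\sigma},h}^n)\Vert_1^2\le k\sum_{j=1}^{n}\Vert(e_{u,h}^j,e_{{\boldsymbol\sigma},h}^j)\Vert_1^2\le C(T)(k^2+h^{2(m+1)})$, since a single term is dominated by the whole $l^2H^1$-sum. Combining this with the splitting $\Vert(u^n_h,{\boldsymbol\sigma}^n_h)\Vert_1\le \Vert(u(t_n),{\boldsymbol\sigma}(t_n))\Vert_1+\Vert(e_{u,i}^n,e_{{\boldsymbol\sigma},i}^n)\Vert_1+\Vert(e_{u,h}^n,e_{{\boldsymbol\sigma},h}^n)\Vert_1$, the interpolation errors (\ref{EI01})--(\ref{EI02}), and the assumed $L^\infty(H^{m+1})$-regularity of $(u,{\boldsymbol\sigma})$, I would obtain $\Vert(u^n_h,{\boldsymbol\sigma}^n_h)\Vert_1^2\le C+C(T)(k+h^{2(m+1)}/k)$, which is precisely the estimate stated at the beginning of Remark~\ref{remH2}.

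Finally I would invoke the dichotomy recorded in Remark~\ref{remH2}: for any pair $(k,h)$ either the inverse-inequality bound (\ref{uua}) of Remark~\ref{inddom01} or the hypothesis (\ref{uub}) holds, and in both cases one concludes the uniform estimate $\Vert(u^n_h,{\boldsymbol\sigma}^n_h)\Vert_1\le C$ with $C$ independent of $(k,h,n)$, i.e.\ (\ref{uuc}). Consequently $k\Vert(u^n_h,{\boldsymbol\sigma}^n_h)\Vert_1^4\le Ck$, which is small whenever $k$ is small. Substituting this reduction, the three hypotheses of Theorem~\ref{CNMtma} collapse to: $k$ small enough (in place of (\ref{cmn0a})$_2$), together with the unchanged (\ref{cmn0a})$_1$ and (\ref{cmn0a})$_3$; the convergence conclusion of Theorem~\ref{CNMtma} then applies directly, \emph{without} re-proving the quadratic Newton recursion. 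The only delicate point is the $(k,h)$-independence of the $H^1$-bound, which is exactly why the case split (\ref{uua})/(\ref{uub}) must be cited rather than a single global a priori estimate assumed; beyond this bookkeeping no genuine obstacle arises.
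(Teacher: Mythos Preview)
Your approach is correct and matches the paper's intent: the remark carries no separate proof in the paper, and you have correctly unpacked the citation of Remarks~\ref{inddom01} and~\ref{remH2}. There is, however, one small slip in your dichotomy step. You write that ``in both cases one concludes the uniform estimate $\Vert(u^n_h,{\boldsymbol\sigma}^n_h)\Vert_1\le C$, i.e.\ (\ref{uuc})''. This is not what happens in the branch where (\ref{uua}) holds: there one does \emph{not} obtain a uniform $H^1$-bound independent of $h$; rather, the inverse inequality gives only $\Vert(u^n_h,{\boldsymbol\sigma}^n_h)\Vert_1^4\le C_1/h^4$, and then (\ref{uua}) directly yields $k\Vert(u^n_h,{\boldsymbol\sigma}^n_h)\Vert_1^4\le kC_1/h^4$ small, which is (\ref{cmn0a})$_2$. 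The uniform bound (\ref{uuc}) is established only in the branch where (\ref{uub}) holds, and in that branch (\ref{cmn0a})$_2$ reduces to $kC$ small. So the two branches reach the same destination---(\ref{cmn0a})$_2$ follows from $k$ small---but by different routes, exactly as in Remark~\ref{remH2}. Once you correct this, your argument is complete and faithful to the paper.
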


Now, observe that from (\ref{rec01}), the following estimate for $(u^l_h,{\boldsymbol\sigma}^l_h)$ solution of (\ref{NM01}) is obtained:
\begin{equation}\label{rec02}
\Vert (u^{l}_h,{\boldsymbol \sigma}^{l}_h)\Vert_{1}\leq \Vert (u^{n}_h,{\boldsymbol \sigma}^{n}_h)\Vert_{1}+\sqrt{\delta_0}, \ \ \forall l\geq 0.
\end{equation}

Then, using the above estimate,  the conditional unique solvability of (\ref{NM01}) will be proved.
\begin{tma} \label{piclinteoNewt}{\bf (Conditional unique solvability) }
Assume (\ref{cmn0a}). Then there exists a unique $(u^l_h,{\boldsymbol\sigma}^l_h)$ solution of (\ref{NM01}).
\end{tma}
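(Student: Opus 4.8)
The plan is to follow the pattern of Theorem~\ref{piclinteo}: for fixed data $(u^{n-1}_h,{\boldsymbol\sigma}^{n-1}_h)$ and fixed previous iterate $(u^{l-1}_h,{\boldsymbol\sigma}^{l-1}_h)$, problem (\ref{NM01}) is a square linear algebraic system in the unknown $(u^l_h,{\boldsymbol\sigma}^l_h)$, so it suffices to prove uniqueness. Thus I would take two solutions $(u^l_{h,1},{\boldsymbol\sigma}^l_{h,1})$, $(u^l_{h,2},{\boldsymbol\sigma}^l_{h,2})$, set $U=u^l_{h,1}-u^l_{h,2}$ and $\Sigma={\boldsymbol\sigma}^l_{h,1}-{\boldsymbol\sigma}^l_{h,2}$, and subtract the two copies of (\ref{NM01}). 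All right-hand sides and all terms depending only on $(u^{l-1}_h,{\boldsymbol\sigma}^{l-1}_h)$ cancel, leaving the homogeneous system $\frac1k(U,\bar u_h)+(\nabla U,\nabla\bar u_h)+(u^{l-1}_h\Sigma,\nabla\bar u_h)+(U{\boldsymbol\sigma}^{l-1}_h,\nabla\bar u_h)=0$ and $\frac1k(\Sigma,\bar{\boldsymbol\sigma}_h)+(B_h\Sigma,\bar{\boldsymbol\sigma}_h)-2(u^{l-1}_h\nabla U,\bar{\boldsymbol\sigma}_h)-2(U\nabla u^{l-1}_h,\bar{\boldsymbol\sigma}_h)=0$.

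Next I would test the first equation with $\bar u_h=U$, the second with $\bar{\boldsymbol\sigma}_h=\tfrac12\Sigma$, and add. The key structural point (exactly as in Theorems~\ref{piclinteo} and \ref{CNMtma}) is that $(u^{l-1}_h\Sigma,\nabla U)-(u^{l-1}_h\nabla U,\Sigma)=0$, so these terms disappear; using (\ref{opnew1})$_2$ one has $(B_h\Sigma,\Sigma)=\Vert\Sigma\Vert_1^2$, and testing (\ref{NM01})$_1$ with $\bar u_h=1$ shows $\int_\Omega u^l_{h,i}=\int_\Omega u^{n-1}_h$, hence $\int_\Omega U=0$ and $\Vert\nabla U\Vert_0\simeq\Vert U\Vert_1$. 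The only surviving nonlinear terms, $-(U{\boldsymbol\sigma}^{l-1}_h,\nabla U)$ and $(U\nabla u^{l-1}_h,\Sigma)$, I would control with H\"older, the $L^3$--$L^6$ Sobolev embeddings, the 3D interpolation inequality (\ref{in3D}) and Young's inequality, absorbing the $\Vert(U,\Sigma)\Vert_1^2$ contributions into the left-hand side, to reach
\begin{equation*}
\Big(\frac1k - C\Vert(u^{l-1}_h,{\boldsymbol\sigma}^{l-1}_h)\Vert_1^4\Big)\Vert U\Vert_0^2 + \frac{1}{2k}\Vert\Sigma\Vert_0^2 + c_0\Vert(U,\Sigma)\Vert_1^2 \le 0
\end{equation*}
for some $c_0>0$.

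To close, I would invoke the uniform bound (\ref{rec02}), which holds for every iterate and gives $\Vert(u^{l-1}_h,{\boldsymbol\sigma}^{l-1}_h)\Vert_1\le\Vert(u^n_h,{\boldsymbol\sigma}^n_h)\Vert_1+\sqrt{\delta_0}$, hence $k\,C\Vert(u^{l-1}_h,{\boldsymbol\sigma}^{l-1}_h)\Vert_1^4\le C k\big(\Vert(u^n_h,{\boldsymbol\sigma}^n_h)\Vert_1^4+\delta_0^2\big)<1$ under the smallness assumptions (\ref{cmn0a})$_2$--(\ref{cmn0a})$_3$. Then all three terms on the left are nonnegative, forcing $\Vert(U,\Sigma)\Vert_1=0$, i.e. $u^l_{h,1}=u^l_{h,2}$ and ${\boldsymbol\sigma}^l_{h,1}={\boldsymbol\sigma}^l_{h,2}$; uniqueness together with the square-linear-system structure then also yields existence. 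The main obstacle — the only new point relative to Theorem~\ref{piclinteo} — is obtaining a bound on the coefficient $\Vert(u^{l-1}_h,{\boldsymbol\sigma}^{l-1}_h)\Vert_1^4$ that is uniform in the iteration index $l$; this is precisely what (\ref{rec02}) supplies, and one should note that (\ref{rec02}) was derived from the recurrence (\ref{rec01}) for \emph{any} solution of (\ref{NM01}) under hypothesis (\ref{cmn0a}), so invoking it here introduces no circularity. The rest is the standard test-and-absorb estimate.
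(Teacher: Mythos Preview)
Your proof is correct and follows essentially the same route as the paper's: reduce to uniqueness via the square-linear-system argument, subtract two solutions, test with $(U,\tfrac12\Sigma)$ so that the terms $(u^{l-1}_h\Sigma,\nabla U)$ and $(u^{l-1}_h\nabla U,\Sigma)$ cancel, control the two remaining cross-terms $(U{\boldsymbol\sigma}^{l-1}_h,\nabla U)$ and $(U\nabla u^{l-1}_h,\Sigma)$ with (\ref{in3D}) and Young, and then invoke (\ref{rec02}) together with (\ref{cmn0a})$_{2,3}$ to make the coefficient of $\Vert U\Vert_0^2$ nonnegative. Your explicit comment on why appealing to (\ref{rec02}) is not circular is a useful clarification that the paper leaves implicit.
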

\begin{proof}
By linearity, it suffices to prove uniqueness of solution of (\ref{NM01}). Let $(u^l_{h,1},{\boldsymbol
\sigma}^l_{h,1}),(u^l_{h,2},{\boldsymbol \sigma}^l_{h,2})\in U_h\times  {\boldsymbol \Sigma}_h$ be two solutions of (\ref{NM01}). Then, denoting $u^l_h=u^l_{h,1}-u^l_{h,2}$ and ${\boldsymbol
\sigma}^l_h={\boldsymbol \sigma}^l_{h,1}-{\boldsymbol \sigma}^l_{h,2}$,
\begin{equation}\label{NM03}
\frac{1}{k}(u^l_h,\bar{u}_h)+ (\nabla u^{l}_h, \nabla\bar{u}_h)+ (u^{l-1}_h{\boldsymbol \sigma}^{l}_h,\nabla
\bar{u}_h)+(u^{l}_h{\boldsymbol \sigma}^{l-1}_h,\nabla
\bar{u}_h)=0, \ \forall
\bar{u}_h\in U_h,
\end{equation}
\vspace{-.5 cm}
\begin{equation}\label{NM04}
\frac{1}{k}({\boldsymbol \sigma}^l_h,\bar{\boldsymbol
\sigma}_h)+ (B_h {\boldsymbol
\sigma}^l_h,\bar{\boldsymbol \sigma}_h)- 2(u^{l-1}_h \nabla
u^l_h,\bar{\boldsymbol \sigma}_h) - 2(u^{l}_h \nabla
u^{l-1}_h,\bar{\boldsymbol \sigma}_h)=0, \ \forall
\bar{\boldsymbol \sigma}_h\in {\boldsymbol \Sigma}_h.
\end{equation}
Taking $\bar{u}_h=u^l_h$ and $\displaystyle\bar{\boldsymbol \sigma}_h=\frac{1}{2}{\boldsymbol
\sigma}^l_h$ in (\ref{NM03})-(\ref{NM04}), taking into account that $\displaystyle\int_\Omega u^l_h=0$ and using the H\"older and Young inequalities 
%as well as the interpolation inequality 
and (\ref{in3D}), 
 one obtains 
\begin{equation*}
\displaystyle\frac{1}{2k}\Vert
(u^l_h,{\boldsymbol
\sigma}^l_h)\Vert_{0}^2+\frac{1}{2}\Vert (u^l_h, {\boldsymbol
\sigma}^l_h)\Vert_{1}^2 \leq \displaystyle\frac{1}{4}\Vert (u^l_h, {\boldsymbol
\sigma}^l_h)\Vert_{1}^2 +  C\Vert (u^{l-1}_h,{\boldsymbol \sigma}^{l-1}_h)\Vert_{1}^4 \Vert
(u^l_h,{\boldsymbol
\sigma}^l_h)\Vert_{0}^2,
\end{equation*}
which, using (\ref{rec02}) (recall that (\ref{rec02}) holds assuming (\ref{cmn0a})), implies that
\begin{equation}\label{NM06}
\left[\displaystyle\frac{1}{k}- C\Big(\Vert (u^{n}_h,{\boldsymbol \sigma}^{n}_h )\Vert_{1}^4+(\delta_0)^2 \Big)\right] \Vert
(u^l_h,{\boldsymbol
\sigma}^l_h)\Vert_{0}^2+\frac{1}{2} \Vert (u^l_h, {\boldsymbol
\sigma}^l_h)\Vert_{1}^2\leq 0.
\end{equation}
Therefore, assuming (\ref{cmn0a})$_{2-3}$, from (\ref{NM06}) we conclude that $\Vert (u^l_h,{\boldsymbol\sigma}^l_h)\Vert_{1}=0$, and therefore, $u^l_{h,1}=u^l_{h,2}$ and ${\boldsymbol\sigma}^l_{h,1}={\boldsymbol\sigma}^l_{h,2}$. Thus, there exists a unique $(u^l_h,{\boldsymbol\sigma}^l_h)$ solution of (\ref{NM01}).
\end{proof}

\section{Numerical results}
In this section, we consider the nonlinear scheme $\textbf{US}$ approximating \eqref{modelf01}-\eqref{modelf01eqv} with adequate right hand sides 
 %$f(\x,t)$, $\mathbf{g}(\x,t)$ and $h(\x,t)$ in (\ref{modelf02}) and (\ref{edovf}) respectively, 
 %where these right hand sides are chosen 
 corresponding to the exact solution
 $$u=e^{-t}(\cos(2\pi x) \cos(2\pi y) +2), \quad v=(1+\sin(t))(\cos(2\pi x) \cos(2\pi y) +2),
 $$
 $${\boldsymbol\sigma}=\nabla v=(1+sin(t)) (-2\pi sin(2\pi x)cos(2\pi y),-2\pi \sin(2\pi y)\cos(2\pi x)).
 $$
  In our computations, we take $\Omega=(0,1)^2$, and we use a uniform partition with $m+1$ nodes in each direction. We choose the spaces for $u$, ${\boldsymbol\sigma}$ and $v$, generated by $\mathbb{P}_1,\mathbb{P}_1,\mathbb{P}_{2}$-continuous FE, respectively. The linear iterative method used 
  %to approach the nonlinear scheme $\mathbf{US}$ 
  is Newton's method, 
  %and in all  cases, the iterative method 
  stopping when the relative error in $L^2$-norm is less than $tol = 10^{-6}$. \\
 
 In order to check numerically the error estimates obtained in our theoretical analysis, we choose $k=10^{-5}$ and the numerical results with respect to the final time $T=0.001$ are listed in Tables~\ref{Tab1}-\ref{Tab3}. We can see that when $h\rightarrow 0$, $\Vert u(t_n) - u^n_h \Vert_{L^2 H^1}$ is convergent in optimal rate $\mathcal{O}(h)$, and $\Vert u^n_h - \mathcal{R}^u_h u^n_h\Vert_{L^2 H^1}$, $\Vert u(t_n) - u^n_h \Vert_{L^\infty L^2}$, $\Vert u^n_h - \mathcal{R}^u_h u^n_h \Vert_{L^\infty L^2}$, $\Vert v(t_n) - v^n_h \Vert_{L^\infty H^1}$ and $\Vert v^n_h - \mathcal{R}^v_h v^n_h \Vert_{L^\infty H^1}$ are convergent in optimal rate $\mathcal{O}(h^2)$. 

\begin{table}[h] 
\begin{footnotesize} 
\begin{center}
\begin{tabular}{|| c | c | c | c | c||}
\hline
\hline
$m \times m$ & $\Vert u(t_n) - u^n_h \Vert_{l^\infty L^2}$ & Order & $\Vert u^n_h - \mathcal{R}^u_h u^n_h \Vert_{l^\infty L^2}$ & Order  \\
\hline
 $40 \times 40$ & $2.5 \times 10^{-3}$  & -   & $1.5 \times 10^{-3}$  & - \\ 
\hline
 $50 \times 50$ &  $1.6 \times 10^{-3}$ & 1.9970 & $9 \times 10^{-4}$ & 1.9846 \\      
\hline
 $60 \times 60$  &  $1.1 \times 10^{-3}$   & 1.9980 & $7 \times 10^{-4}$ & 1.9896 \\       
\hline
 $70 \times 70$  &  $8 \times 10^{-4}$ & 1.9985 & $5 \times 10^{-4}$ & 1.9923 \\
\hline
 $80 \times 80$  & $6 \times 10^{-4}$   & 1.9989  &  $4 \times 10^{-4}$  & 1.9938 \\    
\hline
\hline
\end{tabular}
\end{center}
\caption{Error orders for $\Vert u(t_n) - u^n_h \Vert_{l^\infty L^2}$ and $\Vert u^n_h - \mathcal{R}^u_h u^n_h \Vert_{l^\infty L^2}$.} 
\label{Tab1} 
\end{footnotesize} 
\end{table}

\begin{table}[h] 
\begin{footnotesize} 
\begin{center}
\begin{tabular}{|| c | c | c | c | c||}
\hline
\hline
$m \times m$ & $\Vert u(t_n) - u^n_h \Vert_{l^2 H^1}$ & Order & $\Vert u^n_h - \mathcal{R}^u_h u^n_h \Vert_{l^2 H^1}$ & Order  \\
\hline
 $40 \times 40$ & $1.11 \times 10^{-2}$ & -   &  $5.219 \times 10^{-4}$  & - \\ 
\hline
 $50 \times 50$ & $8.9 \times 10^{-3}$ & 0.9978 &  $3.348 \times 10^{-4}$ & 1.9896 \\      
\hline
 $60 \times 60$  & $7.4 \times 10^{-3}$   & 0.9985 &  $2.328 \times 10^{-4}$& 1.9937 \\       
\hline
 $70 \times 70$  & $6.3 \times 10^{-3}$ & 0.9989 &  $1.711 \times 10^{-4}$ & 1.9966 \\
\hline
 $80 \times 80$  & $5.5 \times 10^{-3}$   & 0.9992  &  $1.310 \times 10^{-4}$  & 1.9988 \\    
\hline
\hline
\end{tabular}
\end{center}
\caption{Error orders for $\Vert u(t_n) - u^n_h \Vert_{l^2 H^1}$ and $\Vert u^n_h - \mathcal{R}^u_h u^n_h \Vert_{l^2 H^1}$.} 
\label{Tab2} 
\end{footnotesize} 
\end{table} 

\begin{table}[h] 
\begin{footnotesize} 
\begin{center}
\begin{tabular}{|| c | c | c | c | c||}
\hline
\hline
$m \times m$ & $\Vert v(t_n) - v^n_h \Vert_{l^\infty H^1}$ & Order & $\Vert v^n_h - \mathcal{R}^v_h v^n_h \Vert_{l^\infty H^1}$ & Order  \\
\hline
 $40 \times 40$ & $1.08 \times 10^{-2}$ & -   & $9.875 \times 10^{-4}$  & - \\ 
\hline
 $50 \times 50$ & $6.9 \times 10^{-3}$ & 1.9985 & $5.526 \times 10^{-4}$ & 2.6014 \\      
\hline
 $60 \times 60$  & $4.8 \times 10^{-3}$   & 1.9990 & $3.448 \times 10^{-4}$ & 2.5874 \\       
\hline
 $70 \times 70$  & $3.5 \times 10^{-3}$ & 1.9993 & $2.318 \times 10^{-4}$ & 2.5768 \\
\hline
 $80 \times 80$  & $2.7 \times 10^{-3}$   & 1.9995  &  $1.645 \times 10^{-4}$  & 2.5684 \\    
\hline
\hline
\end{tabular}
\end{center}
\caption{Error orders for $\Vert v(t_n) - v^n_h \Vert_{l^\infty H^1}$ and $\Vert v^n_h - \mathcal{R}^v_h v^n_h \Vert_{l^\infty H^1}$.} 
\label{Tab3} 
\end{footnotesize} 
\end{table} 

\section*{Acknowledgements}
The authors have been partially supported by MINECO grant MTM2015-69875-P
(Ministerio de Econom\'{\i}a y Competitividad, Spain) with the participation of FEDER.
The third author have also been supported by Vicerrector\'ia de Investigaci\'on y Extensi\'on of Universidad
Industrial de Santander.

\end{document}